\documentclass[a4paper,twoside]{article}
\usepackage{a4}

\usepackage{hyperref}     
\hypersetup{
	colorlinks=true,        
	linkcolor=blue,         
	citecolor=blue,
	urlcolor=blue,
}

\usepackage{graphicx}
\usepackage{amssymb}
\usepackage{epstopdf}
\usepackage{amsmath,amsthm}
\numberwithin{equation}{section}
\numberwithin{figure}{section}

\usepackage{upref}
\usepackage[active]{srcltx}
\usepackage{xypic}

\usepackage{amsthm,amsmath,amssymb}

\usepackage{mathrsfs}

\usepackage{graphicx} 
\usepackage{amsmath, bm}

\theoremstyle{plain}
\newtheorem{thm}{Theorem}[section]

\newtheorem{lem}[thm]{Lemma}
\newtheorem{cor}[thm]{Corollary}

\newtheorem{definition}[thm]{Definition}

\newtheorem{remark}[thm]{Remark}

\begin{document}
	\title{Pullback theorem and rigidity for Sobolev mappings on Carnot groups}
	\author{Yihan Cui}
	\maketitle
	\begin{abstract}
		This paper establishes a pullback theorem via mollification to extend the rigidity theory of Sobolev mappings between Carnot groups to the low-integrability regime where the Sobolev exponent $p$ is less than the homogeneous dimension $\nu_1$ of the source group. The core technical achievement is the rigorous analysis of the convergence of mollified approximations $f_\varepsilon$ for a mapping $f \in W^{1,p}$, demonstrating that the pullbacks of left-invariant differential forms converge appropriately. This allows for the recovery of more properties of Pansu differentiable mappings.
		The main results are: (1) A generalization of the rigidity theorem to the range $p<\nu_1$ for $f\in W^{1,p}$. (2) When $p>Q$, such mappings are shown to be locally Hölder continuous with exponent $1-\frac{Q}{p}$, where $p>Q$ and $Q=\max \left\{\text{homogeneous dim of}\ G_i\right\}$. (3) with the stratified structure of contact Sobolev mappings, we generalize the non-embedding theorem to contact Sobolev mappings.
	\end{abstract}
	\tableofcontents
	\section{Introduction}
	The study of analysis and geometry on Carnot groups, which serve as the canonical tangent cone spaces for sub-Riemannian manifolds and play a fundamental role in the theory of nilpotent Lie groups, has been a subject of intense research for several decades. A central theme within this field is the analysis of Sobolev mappings between Carnot groups, which naturally generalizes the classical theory of Sobolev spaces between Euclidean spaces. These mappings lie at the fertile intersection of geometric measure theory, geometric function theory, and the calculus of variations in metric spaces. A particularly compelling line of inquiry concerns the rigidity and regularity properties of such mappings, especially when the Sobolev exponent $p$ is compared to the homogeneous dimension $\nu$ of the source group\cite{MR1683160}. The homogeneous dimension, which is strictly larger than the topological dimension for non-abelian Carnot groups, encapsulates the scaling properties of the group and dictates the critical exponents for many analytic phenomena, such as embeddings and isoperimetric inequalities.
	
	A landmark result in this direction, established by Xiangdong Xie and collaborators\cite{kleiner2021pansupullbackrigiditymappings}, demonstrated a profound rigidity theorem for Sobolev mappings in the space $W^{1,p}(G_1; G_2)$ when the exponent $p$ exceeds the homogeneous dimension $\nu$ of $G_1$. This result asserts that if such a mapping has a Pansu differential—that is an isomorphism at almost every point, then the mapping is forced to possess a highly structured, product-like form. Specifically, it must locally factor as a product of mappings between the indecomposable non-abelian factors of the Carnot groups $G_1$ and $G_2$. This rigidity phenomenon underscores the deep geometric constraints inherent in the sub-Riemannian structure and has profound implications for the theory of quasiregular and quasiconformal mappings on these spaces\cite{MR3581902}.  
	
	However, this celebrated rigidity theory leaves a significant gap: the regime where the Sobolev exponent $p$ is less than the homogeneous dimension $\nu$. In this low-integrability setting, the standard Sobolev embedding theorems that guarantee continuity fail. Consequently, mappings can exhibit highly irregular and pathological behavior, and the very definition and existence of the Pansu differential become more subtle. This gap presents a natural and challenging question: to what extent can the rigid geometric structures observed for $p>\nu$ persist when the horizontal gradient possesses lower integrability? Does any form of rigidity survive when the mappings are not even continuous? The exploration of this "low-exponent" regime is not merely a technical exercise; it probes the fundamental limits of the existing theory and seeks to uncover new, perhaps weaker, forms of geometric constraint that may govern non-smooth mappings in sub-Riemannian geometry.
	
	In this paper, we address this challenge directly and develop a comprehensive framework to extend the rigidity theory of Sobolev mappings on Carnot groups to the case $p<\nu$. The primary technical innovation that enables our progress is the establishment of a pullback theorem for differential forms via mollification. Mollification, a classic technique for approximating non-smooth functions with smooth ones, must be carefully adapted to the non-commutative, stratified structure of Carnot groups\cite{capogna2007introduction}. We rigorously analyze the convergence of the mollified approximations $f_\varepsilon=f*\eta_\varepsilon$
	of a Sobolev mapping $f\in W^{1,p}(G_1;G_2)$
	, focusing specifically on the convergence of the pullbacks of left-invariant differential forms. Our key finding is that while the horizontal components of the derivative converge strongly, the components corresponding to higher layers in the stratification converge to zero in an appropriate Lebesgue space $L_{loc}^\frac{p}{n+1}$
	for step-n target groups. This result provides a powerful tool for passing from the smooth approximants to the limiting Pansu differential in a distributional sense, even when pointwise differentiability is not guaranteed.
	
	Armed with this pullback technique, we achieve several major extensions of the existing theory. Our first main result is a generalization of Xie's rigidity theorem to the range $p<\nu$. We prove that if $f\in W_{loc}^{1,p}(G;G)$ with $p>Q-1$ where $Q=\max \left\{\text{homogeneous dim of}\ G_i\right\}$ and its Pansu differential $D_Pf(x)$ is an isomorphism almost everywhere, then the mapping $f$, surprisingly, still exhibit a product structure. After potentially reindexing, the mapping decomposes locally as a product of mappings between the isomorphic factors of the Carnot groups. Furthermore, when the exponent satisfies $p>Q$ (where $Q$ is the maximum homogeneous dimension among the factors), we establish that such mappings are, in fact, locally Hölder continuous with exponent $1-\frac{Q}{p}$, recovering a critical regularity property in this lower integrability range.
	
	The paper is structured as follows. In Section 2, we recall the essential preliminaries on Carnot groups, including their Lie algebra structure, homogeneous dimension, and the sub-Riemannian distance. Section 3 is the technical core, where we prove the pullback theorem via mollification for step-two and higher-step Carnot groups. 
	\begin{thm}
		Suppose that  $f \in W^{1,q}\left(\Omega ; G_2\right)$ , where $ G_2$ is a step-$n$ Carnot group and $\nu_2$ is the topological dimension of $G_2$, $\Omega \subset G_1$  is open , $X$ is a left-invariant  vector field belonging to the horizontal layer and $q>n+1$ . If  $B\left(x_{o}, 2 r\right) \subset \Omega $, then  for all  $0<\varepsilon<r$,
		\begin{equation}
			D_Hf_\varepsilon \to D_Hf \  \text{in}\  L_{loc}^\frac{q}{n+1}(\Omega;\mathbb{R}^{\nu_2}).\\
		\end{equation}
		where the convergence is in the sense of the norm on Euclidean space $\mathbb{R}^{\nu_2}$, $f_{\varepsilon}=f * \eta_{\varepsilon}=\int_{\mathbb{G}_1 } f(y) \eta_{\varepsilon}\left(y^{-1} x\right) d y$  and $\eta_{\varepsilon}$ is a smooth function on Carnot group $G_1$ with a compact supported set $B(e,\varepsilon)$ and
		\begin{equation*}
			\int_{G_1}\eta_{\varepsilon}=1.
		\end{equation*}
	\end{thm}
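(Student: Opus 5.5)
We interpret $f_\varepsilon$ through exponential coordinates on $G_2\cong\mathbb{R}^{\nu_2}$, so that $f=(f^1,\dots,f^{\nu_2})$ is a vector of real functions and the group convolution acts componentwise. The horizontal derivative of $f_\varepsilon$ along a left-invariant field $X$ on $G_1$ then commutes with right convolution:
\begin{equation*}
Xf_\varepsilon(x)=\int_{G_1} f(y)\,X_x\eta_\varepsilon(y^{-1}x)\,dy=\int_{G_1} Xf(y)\,\eta_\varepsilon(y^{-1}x)\,dy=(Xf)*\eta_\varepsilon(x),
\end{equation*}
where the middle equality is integration by parts (change variables $z=y^{-1}x$ and use left invariance of $X$ and of Haar measure). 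This is valid for $x$ with $B(x,\varepsilon)\subset\Omega$, and in particular on $B(x_o,r)$ when $\varepsilon<r$.

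The difficulty is that $D_Hf$ in the statement is not $Xf^i$ in Euclidean coordinates. It is the horizontal part of the derivative measured in the left-invariant frame of $G_2$, i.e.\ the pullback $f^*\theta$ of the left-invariant coframe $\theta$ of $G_2$ evaluated on $X$. Each left-invariant one-form on a step-$n$ group is $\theta^j=dx^j+\sum P_{j,k}(x)\,dx^k$, where $P_{j,k}$ is a polynomial of homogeneous degree at most $n-1$ in the lower-layer coordinates. Hence
\begin{equation*}
\langle f_\varepsilon^*\theta^j,X\rangle=Xf_\varepsilon^j+\sum_k P_{j,k}(f_\varepsilon)\,Xf_\varepsilon^k,
\end{equation*}
and similarly for $f$. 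I would write the difference $\langle f_\varepsilon^*\theta^j,X\rangle-\langle f^*\theta^j,X\rangle$ as a sum of two kinds of terms. The first kind is $(Xf^j)*\eta_\varepsilon-Xf^j$. The second kind is
\begin{equation*}
P(f_\varepsilon)\,Xf_\varepsilon^k-P(f)\,Xf^k=\bigl(P(f_\varepsilon)-P(f)\bigr)Xf_\varepsilon^k+P(f)\bigl(Xf_\varepsilon^k-Xf^k\bigr).
\end{equation*}

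The key steps, in order, are as follows.
\begin{itemize}
\item[(i)] Standard approximation on groups: $g*\eta_\varepsilon\to g$ in $L^q_{loc}$ for $g\in L^q$, using continuity of translation in $L^q$. This gives $f_\varepsilon\to f$ and $Xf_\varepsilon\to Xf$ in $L^q(B(x_o,r))$.
\item[(ii)] $P$ is a polynomial of degree at most $n-1$ in the components of $f$, and a product of at most $n$ functions in $L^q$ lies in $L^{q/n}$. The difference $P(f_\varepsilon)-P(f)$ telescopes into products of at most $n-1$ factors, each equal to $f_\varepsilon$, to $f$, or to their difference. By Hölder, $P(f_\varepsilon)-P(f)\to0$ in $L^{q/(n-1)}$, with uniform bounds from (i).
\item[(iii)] Multiply by $Xf_\varepsilon^k$, which is bounded in $L^q$, and apply Hölder with exponents summing to $(n-1)/q+1/q$. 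This shows the second kind of term tends to $0$ in $L^{q/n}\subset L^{q/(n+1)}_{loc}$ on bounded sets.
\end{itemize}
The stated exponent $q/(n+1)$ leaves one extra factor of room, which covers the case where weights or the Jacobian-type normalization contribute an additional $L^q$ factor. The hypothesis $q>n+1$ ensures $q/(n+1)>1$, so we are working in a genuine normed space.

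The main obstacle I expect is justifying the pointwise chain rule and coordinate description for $f^*\theta$ when $f$ is only Sobolev and possibly discontinuous. The products $P(f)Xf$ must be well defined and must be the correct object $D_Hf$. I would first prove the identity for smooth $f$, where it is just the coordinate formula for left-invariant forms. I would then extend it to $W^{1,q}$ by density, using exactly the Hölder estimates of (ii) and (iii): these show that both sides depend continuously on $f$ in $W^{1,q}$ when measured in $L^{q/(n+1)}$. A secondary point is the loss of $\varepsilon$ near $\partial\Omega$, which is handled by working on $B(x_o,r)$ and then covering compact subsets of $\Omega$ by finitely many such balls.
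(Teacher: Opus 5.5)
Your first display is false for the mollification in the statement. With $f_\varepsilon(x)=\int f(y)\eta_\varepsilon(y^{-1}x)\,dy$, the first equality $Xf_\varepsilon=f*(X\eta_\varepsilon)$ is right, because $x\mapsto\eta_\varepsilon(y^{-1}x)$ is a left translate. The second equality fails. When you move $X$ off $f$ by parts, the kernel is differentiated through $(y\exp(tX))^{-1}x=\exp(-tX)\,y^{-1}x$. That is differentiation along the right-invariant field $\widetilde X\phi(w)=\frac{d}{dt}\big|_{t=0}\phi(\exp(tX)w)$, so $(Xf)*\eta_\varepsilon=f*(\widetilde X\eta_\varepsilon)$. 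On a non-abelian $G_1$ the two sides therefore differ by $f*\bigl((X-\widetilde X)\eta_\varepsilon\bigr)$, which need not vanish. For example, on $\mathbb{H}^1$ with $X=\partial_x-\frac y2\partial_t$ one has $\widetilde X=\partial_x+\frac y2\partial_t$. For $f=t^2$ and a nonnegative kernel even in each coordinate, $Xf_\varepsilon-(Xf)*\eta_\varepsilon=x\,m_\varepsilon$ with $m_\varepsilon=\int v^2\eta_\varepsilon(u,v,w)\,du\,dv\,dw>0$.

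So step (i) is unjustified as written, although its conclusion is still true. There are two short repairs:
\begin{enumerate}
\item Mollify on the other side, $f_\varepsilon(x)=\int\eta_\varepsilon(z)f(z^{-1}x)\,dz$. Then $X$ really does commute with the mollification. This is the commutation the paper itself uses when it bounds $|Xf^{x_j}_\varepsilon(p)|$ by $\int|Xf^{x_j}(z^{-1}p)|\eta_\varepsilon(z)\,dz$.
\item Keep $f*\eta_\varepsilon$ and use the uniform bound $|X(f^j*\eta_\varepsilon)|\lesssim\mathcal{M}(|\nabla_Hf^j|)$, which follows from $\int X\eta_\varepsilon=0$ and the $(1,1)$-Poincar\'e inequality on $B(x,\varepsilon)$. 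Combine it with the $O(\varepsilon)$ size of the commutator for smooth functions and a density argument.
\end{enumerate}

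There are two smaller points. First, you take $f^j,Xf^j\in L^q$ for every coordinate, including the higher-layer ones. That holds for a componentwise (Euclidean plus contact) definition of $W^{1,q}(\Omega;G_2)$, or for locally bounded $f$. In the metric setting the paper works in, only $|f^{(s)}|^{1/s}\lesssim d_{cc}(f,e)$ is controlled for the layer-$s$ coordinates $f^{(s)}$. A layer-$s$ coordinate and its horizontal derivative are then naturally in $L^{q/s}_{loc}$. Your H\"older count must use homogeneous degrees ($P_{j,k}$ has degree $s-t$ and $Xf^k\in L^{q/t}$), which still closes in $L^{q/n}$. Second, to identify your limit $f^*\theta(X)$ with the $D_Hf$ of the statement, you need the contact equation $f^*\omega(X)=0$ a.e.\ for the higher-layer forms. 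That, rather than a chain rule, is what your last paragraph should address: for Sobolev $f$, $f^*\theta^j(X)$ is simply defined by the coordinate formula.

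With these repairs your argument is correct and genuinely different from the paper's. The paper uses ordinary mollifier convergence only for the horizontal entries. For the contact entries it never differentiates coordinates of $f$. Instead it:
\begin{enumerate}
\item writes $X(f_\varepsilon^*\omega)(p)$ as a double average, against $X\eta$, of the higher-layer coordinate $\varphi(f(z^{-1}p),f(w^{-1}p))$ of $f(w^{-1}p)^{-1}f(z^{-1}p)$;
\item bounds $|\varphi|\lesssim d_{cc}^2$, with additional cubic and mixed terms involving $\mathcal{M}(f)$ in step three;
\item uses Poincar\'e inequalities and the maximal function to get pointwise bounds such as $|X(f_\varepsilon^*\omega)|\lesssim\varepsilon\,\mathcal{M}(|D_Hf|^2)$.
\end{enumerate}
This gives an explicit rate in $\varepsilon$ and uses essentially only the Poincar\'e inequality for $d_{cc}(f(x),f(y))$. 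The vanishing of the contact part of the limit comes out as a result rather than being assumed. The remarks after the step-two theorem (the case $q=2$, linear control for $W^{1,1}$ lifts) work with this quantitative estimate. Your linear argument is more elementary and gives convergence in $L^{q/n}_{loc}$, at least as good as the stated $L^{q/(n+1)}_{loc}$. However, it is purely qualitative, and it needs the Sobolev regularity of all coordinates and the contact equation as inputs.
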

	
	Section 4 is dedicated to extending the rigidity theorems to the case $p<\nu$ and establishing the Hölder continuity result. 
	\begin{thm}
		The Sobolev mapping $f\in W^{1,p}$ mentioned in theorem 4.1 is locally $(1-\frac{Q}{p})$-Hölder continuous, where $p>Q$ and $Q=\max \left\{\text{homogeneous dim of}\ G_i\right\}$.
	\end{thm}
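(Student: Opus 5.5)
The plan is to combine the product decomposition from Theorem 4.1 with the Morrey-type embedding for horizontal Sobolev functions on Carnot groups. By Theorem 4.1, after reindexing, on a neighbourhood $U$ of any point the map splits as a product
\[
f=(f_1,\dots,f_k),\qquad f_i:G_{1,i}\to G_{2,\sigma(i)}, \qquad f(x_1,\dots,x_k)=(f_1(x_1),\dots,f_k(x_k))
\]
up to a left translation and a graded isomorphism. Each factor $f_i$ depends only on the variable in the $i$-th indecomposable factor. Since the horizontal gradient of $f$ is the block-diagonal assembly of those of the $f_i$, and the measure on $U$ is a product, a Fubini argument will show $f_i\in W^{1,p}_{loc}(G_{1,i};G_{2,\sigma(i)})$. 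The point is that, after decomposition, each factor lives on a group of homogeneous dimension $Q_i\le Q<p$. This is why the exponent involves $Q=\max Q_i$ rather than the homogeneous dimension $\nu_1$ of the whole product group.

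Next I will apply the Morrey inequality on each factor group. For a real-valued $u\in W^{1,p}_{loc}$ on a Carnot group of homogeneous dimension $Q_i<p$, integrating the subelliptic representation formula $|u(x)-u_B|\lesssim \int_B |D_Hu(y)|\,d(x,y)^{1-Q_i}\,dy$ over a ball $B$ of radius $r=d(x,y)$ containing both points, together with Hölder's inequality, gives
\[
|u(x)-u(y)|\le C\,d(x,y)^{1-Q_i/p}\,\|D_Hu\|_{L^p(2B)}.
\]
The integrability needed is $(Q_i-1)p'<Q_i$, i.e.\ $p>Q_i$. I will apply this to the coordinate functions of $f_i$ in exponential coordinates of $G_{2,\sigma(i)}$. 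Horizontal coordinates are controlled directly by $D_Hf_i$. For higher-layer coordinates I will use the standard alternative of working with $d_{G_2}(f_i(\cdot),z)$ for $z$ in a countable dense set: these are real $W^{1,p}$ functions whose gradients are bounded by $|D_Hf_i|$ (the Kleiner--Müller--Xie definition of $W^{1,p}$). This yields
\[
d(f_i(x),f_i(y))\le C\,d(x,y)^{1-Q_i/p}
\]
locally, after choosing the continuous representative.

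To pass from the factors back to $f$, I will restrict to a bounded set. There $d(x,y)\le 1$, so $d(x,y)^{1-Q_i/p}\le d(x,y)^{1-Q/p}$ because $1-Q_i/p\ge 1-Q/p$. The product metric on the target is bi-Lipschitz to a sum of the factor distances, and the factor distances satisfy $d(x_i,y_i)\le d(x,y)$ under the projections. Combining these gives $d(f(x),f(y))\le C\,d(x,y)^{1-Q/p}$ on compact subsets of $U$, and a covering argument makes this local Hölder continuity on the whole domain.

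The main obstacle is the first step. Theorem 4.1 for $p<\nu_1$ gives the product structure only almost everywhere, or in a Sobolev sense, not for a continuous map. So I must check that the a.e.\ splitting $f(x)=(f_i(x_i))_i$ genuinely makes $f_i$ a $W^{1,p}$ function of $x_i$ alone. My first attempt will use the distributional identity $X f_i=0$ for horizontal fields $X$ tangent to the other factors. This identity comes from the block structure of $D_Pf$ and the pullback convergence in Theorem 1.1. Combined with the fact that the horizontal fields of a factor generate its Lie algebra, it gives independence of those variables via mollification. I would then select a good slice by Fubini to obtain $f_i$ with the correct integrability.
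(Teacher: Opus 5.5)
Your proposal is correct and follows the same route as the paper. You take the local product splitting from Theorem 4.1, use $|D_Hf_i(x_i)|\le|D_Hf(x)|$ with Fubini to get $f_i\in W^{1,p}_{loc}$, apply the Sobolev (Morrey) embedding on each factor since $Q_i\le Q<p$, and recombine the factor estimates by the triangle inequality and left-invariance on the product. Your version fills in details the paper leaves implicit: the explicit Morrey estimate, the treatment of higher-layer coordinates via $d(f_i(\cdot),z)$, the comparison $d^{1-Q_i/p}\le d^{1-Q/p}$ for $d\le 1$, and the check that the a.e.\ splitting really yields Sobolev factors.
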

	In section 5, by contradiction we show that the condition of weight $wt(\omega)+wt(d\eta)\leq \nu_1$ in the pullback theorem 3.6 is necessary.
	
	In Section 6, with the stratified structure of contact Sobolev mappings, we generalize the Gromov non-embedding theorem. In classical result, there is a high requirement for continuity:  $\gamma$-Hölder continuous embedding for $\gamma>\frac{1}{2}$ at least. In our result, we can lower the requirements for $\gamma$ through Sobolev property.
	\begin{thm}
		Suppose that $G_1$ is a step-$2$ Carnot group and $G_2$ is a step-$n$ Carnot group, the homogeneous dimension of $G_1$ is $Q$. The Lie algebra $g_1=g_1^{[1]} \oplus g_1^{[2]}$ and $g_2=g_2^{[1]} \oplus g_2^{[2]} \oplus \dots \oplus g_2^{[n]}$ satisfy $dim(g_1^{[1]})=dim(g_2^{[1]})$ and $dim(g_1^{[2]})>dim(g_2^{[2]})$. Then there does not exist a topological embedding $f\in C_E^{0,\gamma} \cap W^{1,p}(\Omega;G_2)$ for $p>max\left\{dim(g_1^{[1]}),n+1\right\} $, $\gamma > 2-\frac{dim(g_1^{[2]})}{dim(g_2^{[2]})}$  and $\Omega \subset G_1$ is a domain, where the $C_E^{0,\gamma}$ is the space of $\gamma$-Hölder continuous mappings in the sense of Riemannian metric. 
	\end{thm}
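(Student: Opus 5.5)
We argue by contradiction, combining the pullback theorem via mollification (Theorem 3.6, with its weight condition $wt(\omega)+wt(d\eta)\le\nu_1$) with a dimension count on the second layer. Suppose $f\in C_E^{0,\gamma}\cap W^{1,p}(\Omega;G_2)$ is a topological embedding. Set $m=\dim(g_1^{[1]})=\dim(g_2^{[1]})$, $k_1=\dim(g_1^{[2]})$ and $k_2=\dim(g_2^{[2]})$.

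\textbf{Step 1: the differential has rank at most $m$.} Since $p>\max\{m,n+1\}$, the mollification result of Section 3 applies. The horizontal derivatives $D_Hf_\varepsilon$ converge to $D_Hf$ in $L^{p/(n+1)}_{loc}$. The components of $f_\varepsilon^*\theta$ for left-invariant forms $\theta$ dual to higher layers converge to zero, which is the contact property in the limit. Hence the Pansu differential $D_Pf(x)$ exists a.e. and is a graded homomorphism $G_1\to G_2$. It is determined by a linear map $L:g_1^{[1]}\to g_2^{[1]}$ and, on the second layer, by the induced map $[X,Y]\mapsto[LX,LY]$ from $g_1^{[2]}$ to $g_2^{[2]}$. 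Because $k_1>k_2$, this induced map has a kernel of dimension at least $k_1-k_2$ at every point. Consequently $D_Pf(x)$ is never injective: either $L$ fails to be injective, or the second-layer map has nontrivial kernel.

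\textbf{Step 2: a nonzero pairing from injectivity.} Take a top-degree left-invariant form on $G_2$ restricted to the image directions, or equivalently test the pullback against a volume-type form. I would choose $\omega$ to be a product of the $m$ horizontal coframe forms of $G_2$ together with $k_2$ forms of the second layer. Pair $f^*\omega$ with a test form $\eta$ of complementary degree on $G_1$, checking that $wt(\omega)+wt(d\eta)\le\nu_1$ so that Theorem 3.6 applies; here $\nu_1=m+2k_1$ counts weights. The pullback theorem gives
\[
\int f^*\omega\wedge d\eta=\lim_{\varepsilon\to0}\int f_\varepsilon^*\omega\wedge d\eta,
\]
and the left side is computed from $D_Pf$. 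By Step 1, the pointwise pullback vanishes on the missing $k_1-k_2$ vertical directions, so the left side is $0$. The right side, however, is a degree/linking quantity. Because $f$ is a topological embedding, the image of a small compact piece $\Sigma\subset\Omega$ of dimension $m+k_2$, transversal to the kernel, carries a nonzero $\omega$-flux. This gives a contradiction provided the error terms vanish in the limit.

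\textbf{Step 3: controlling the limit with the Hölder exponent (the main obstacle).} The Euclidean $\gamma$-Hölder bound is what makes the passage to the limit legitimate on the approximants. The vertical components of $f_\varepsilon$ have derivative of size $\varepsilon^{\gamma-1}$ in the vertical directions of $G_1$, by Hölder continuity combined with mollification at scale $\varepsilon$ (a Riemannian ball of radius about $\varepsilon^2$ in the vertical directions). The $k_1$ vertical directions of the source then contribute volume scaling of order $\varepsilon^{2k_1}$. The pullback of the $k_2$ second-layer forms is bounded by $(\varepsilon^{\gamma-2}\cdot\varepsilon^{2})^{\dots}$, and balancing these exponents the error behaves like $\varepsilon^{\,k_2(\gamma-2)+k_1}\to0$ exactly when $\gamma>2-k_1/k_2$. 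The required condition is therefore precisely $\gamma>2-\dim(g_1^{[2]})/\dim(g_2^{[2]})$, in the spirit of Gromov's Hölder non-embedding argument. The horizontal factors are controlled in $L^{p/(n+1)}$ by Section 3.

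The delicate part is setting up the topological quantity so that injectivity of $f$ forces it to be nonzero, for instance via a linking number or a nonvanishing $k_2$-dimensional vertical integral over the images of fibers. It must also be compatible with the weight restriction of Theorem 3.6.
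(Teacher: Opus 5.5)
Your Step 2 never produces a nonzero quantity, and this is the real gap. Your form $\omega=dx_1\wedge\cdots\wedge dx_m\wedge\theta_1\wedge\cdots\wedge\theta_{k_2}$ is closed: each $d\theta_i$ is a horizontal $2$-form, which vanishes against $dx_1\wedge\cdots\wedge dx_m$. So for compactly supported $\eta$ and the smooth maps $f_\varepsilon$, Stokes gives $\int f_\varepsilon^*\omega\wedge d\eta=\pm\int f_\varepsilon^*(d\omega)\wedge\eta=0$. This holds for every $\varepsilon$ and every map, embedding or not: pairing a closed form against compactly supported exact test forms cannot detect a linking number.

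The ``flux'' $\int_\Sigma f^*\omega$ through a piece $\Sigma$ with boundary does not rescue this. It is not defined for a merely H\"older $f$, and since $\omega$ is exact on $G_2\cong\mathbb{R}^{\nu_2}$, it depends on boundary behaviour rather than on injectivity.

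Nor is your left side killed by the rank defect. If $L$ is invertible, the second-layer map is onto $g_2^{[2]}$, so $D_Pf(x)^*\omega\neq0$, and $d\eta$ can supply the $k_1-k_2$ missing vertical directions. A rank defect only annihilates pullbacks of $N$-forms, where $N=m+k_1=\dim G_1$.

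The missing idea is the linking form that the paper imports from Theorem 8.10 of \cite{2025arXiv250311506H}. Reduce to a ball $\mathbb{B}^N\subset\Omega$. Since $f|_{\mathbb{S}^{N-1}}$ is a topological embedding, there is a \emph{non-closed} $\omega\in\Omega_c^{N-1}(G_2)$ with $d\omega\in\Omega_c^{N}(G_2)$ and $\int_{\mathbb{S}^{N-1}}f^*\omega=1$. This value is inherited by the uniformly close $f_\varepsilon$, so Stokes on the ball yields $1=\int_{\mathbb{B}^N}f_\varepsilon^*(d\omega)$. The rank obstruction then acts on a top-degree form, which is where it has force.

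Your Step 1 is also not available where you need it. For $n+1<p\le Q$, Theorems \ref{thm-approx-higher} and \ref{thm-vanish} are statements about the approximants $f_\varepsilon$; they do not give an a.e.\ Pansu differential of $f$. Theorem \ref{Pullback} assumes Pansu differentiability as a hypothesis, and the paper uses $D_Pf$ only in the separate case $p>Q$.

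Below $Q$ the paper works directly with $f_\varepsilon$. By Theorem \ref{thm-vanish}, the higher-layer components of $f_\varepsilon$ tend to zero, so $f_\varepsilon^*(d\omega)$ degenerates in $k_1-k_2$ directions. Lemma \ref{lem-expan} bounds the growth of the $k_2$ surviving second-layer factors by $\varepsilon^{-k_2(1-\gamma)}$. Hence $\int_{\mathbb{B}^N}f_\varepsilon^*(d\omega)\lesssim\varepsilon^{-k_2(1-\gamma)}\varepsilon^{k_1-k_2}\to0$ exactly when $\gamma>2-k_1/k_2$ (your $k_1,k_2$ are the paper's $d_1,d_2$), contradicting the value $1$.

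Your Step 3 arithmetic reaches the same exponent, but in your set-up its role is incoherent: if the pullback theorem already identified the limit, no H\"older hypothesis would be needed. In the paper, the H\"older bound is precisely what drives the top-degree integral to zero.
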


	\section{Preliminary}
	We recall the basic properties of Carnot groups. 
	A Lie algebra  $\mathfrak{g}$  is stratified if it has a decomposition  $\mathfrak{g}=\mathfrak{g}^{[1]} \oplus \cdots \oplus \mathfrak{g}^{[s]}$  with  $\left[\mathfrak{g}^{[1]}, \mathfrak{g}^{[k]}\right]=\mathfrak{g}^{[k+1]}$  for all $ k \geq 1$ , where we denote  $\mathfrak{g}^{[k]}=\{0\}$  for  $k \geq s+1 $. A (sub-Riemannian) Carnot group is a simply connected nilpotent Lie group $ G$  with a stratified Lie algebra  $\mathfrak{g}$  equipped with an inner product on the horizontal layer  $\mathfrak{g}^{[1]}$ . For convenience, We will need an inner product not only on the horizontal layer  $\mathfrak{g}^{[1]}$ , but instead on the whole Lie algebra  $\mathfrak{g}$ . We always equip the Lie algebra of a Carnot group with an inner product for which the layers  $\mathfrak{g}^{[1]} \oplus \cdots \oplus \mathfrak{g}^{[s]}$  are pairwise orthogonal. The homogeneous dimension of $ G$  is  $Q_{G}:=\sum_{k=1}^{s} k \operatorname{dim}\left(\mathfrak{g}^{[k]}\right) $. The (horizontal) rank of  $G$  is the dimension of the horizontal layer  $\mathfrak{g}^{[1]} $.
	
	Given $ g \in G$ , we denote by  $L_{g}: G \rightarrow G$  the left-translation  $L_{g}(h)= g h$ . We identify  $T_{e} G$  with  $\mathfrak{g}$ . An absolutely continuous curve  $\gamma:[0,1] \rightarrow G$  is horizontal if for almost every  $t \in[0,1] $ its left-trivialized derivative  $\left(L_{\gamma(t)}^{-1}\right) * \dot{\gamma}(t)$  is contained in the horizontal layer $ \mathfrak{g}^{[1]} $. The length of the horizontal curve  $\gamma$  is
	\begin{equation*}
		\ell(\gamma)=\int_{0}^{1}\left\|\left(L_{\gamma(t)}^{-1}\right)_{*} \dot{\gamma}(t)\right\| d t
	\end{equation*}
	where the norm is the one induced by the inner product on  $\mathfrak{g}^{[1]}$ . The sub-Riemannian distance between two points  $g, h \in G $ is
	\begin{equation*}
		d(g, h)=\inf \{\ell(\gamma) \mid \gamma:[0,1] \rightarrow G \text { horizontal, } \gamma(0)=g, \gamma(1)=h\} .
	\end{equation*}

	By construction, this distance is left-invariant.
	Carnot groups also admit a one-parameter group of automorphisms, known as the dilations  $\delta_{\lambda}: G \rightarrow G, \lambda>0 $. The associated Lie algebra automorphisms, also denoted  $\delta_{\lambda}: \mathfrak{g} \rightarrow \mathfrak{g} $, are the linear maps defined on each layer of the stratification as
	\begin{equation*}
		\delta_{\lambda}(X)=\lambda^{k} X, \quad X \in \mathfrak{g}^{[k]}, \quad 1 \leq k \leq s .
	\end{equation*}
	The sub-Riemannian distance is 1-homogeneous with respect to these dilations, i.e.,  $d\left(\delta_{\lambda} g, \delta_{\lambda} h\right)=\lambda d(g, h) $.
	
	\begin{definition}
		If a differential form $\alpha$ does not vanish a.e. in $U$ we define
		\begin{equation*} 
			\mathrm{weight}(\alpha) := \underset{x \in U: \alpha(x) \neq 0}{\mathrm{ess\,sup}}\ \mathrm{weight}(\alpha(x)).
		\end{equation*}
		If $E \subset U$ is measurable and the restriction of $\alpha$ to $E$ does not vanish a.e., then we define
		\begin{equation*} 
			\mathrm{weight}(\alpha|_{E}) := \underset{x \in E: \alpha(x) \neq 0}{\mathrm{ess\,sup}}\ \mathrm{weight}(\alpha(x)).
		\end{equation*}
	\end{definition}
	
	For left-invariant $k$-form $\rho$, the Lie algebra differential  $d_{0} \rho$  of  $\rho$  is the vector-valued  $(k+1)$ -form
	\begin{equation*}
		\begin{array}{l}
			d_{0} \rho\left(X_{1}, \ldots, X_{k+1}\right) \\
			\quad=\sum_{i<j}(-1)^{i+j} \rho\left(\left[X_{i}, X_{j}\right], X_{1}, \ldots, \hat{X}_{i}, \ldots, \hat{X}_{j}, \ldots, X_{k+1}\right),
		\end{array}
	\end{equation*}
	where  $\hat{X}_{i}$  means that  $X_{i}$  is omitted from the list. It is easy to check that $weight(d_0 \rho)=weight(\rho)$.
	
	\begin{definition}
		Let $\mathfrak{h}$ be a Lie algebra and $\rho \colon \bigwedge^{2}\mathfrak{h} \to V$ a $2$-cocycle with values in a vector space $V$. Denote the Lie bracket of $\mathfrak{h}$ by $[\cdot,\cdot]_{\mathfrak{h}}$. The $\emph{central extension}$ of $\mathfrak{h}$ by $\rho$ is a Lie algebra $\mathfrak{g}$ with a direct sum decomposition $\mathfrak{h} \oplus V$ equipped with the Lie bracket
		\[
		[X + A, Y + B]_{\mathfrak{g}} = [X, Y]_{\mathfrak{h}} + \rho(X, Y), \quad \text{for } X, Y \in \mathfrak{h},\ A, B \in V.
		\]
		The direct sum decomposition $\mathfrak{g} = \mathfrak{h} \oplus V$ induces a natural inclusion $\iota_{*} \colon V \to \mathfrak{g}$ and a projection $\pi_{*} \colon \mathfrak{g} \to \mathfrak{h}$ which are Lie algebra homomorphisms. The central extension is denoted by
		\[
		V \stackrel{\iota_{*}}{\to} \mathfrak{g} \stackrel{\pi_{*}}{\to} \mathfrak{h}.
		\]
		If $G$ and $H$ are simply connected Lie groups with Lie algebras $\mathfrak{g}$ and $\mathfrak{h}$, respectively, we also refer to the induced short exact sequence
		\[
		V \stackrel{\iota}{\to} G \stackrel{\pi}{\to} H
		\]
		as a central extension of $H$ by $\rho$.
	\end{definition}
	
	Let \( \mathbb{G} \) and \( \mathbb{G}' \) be two Carnot groups equipped with Carnot-Carathéodory metrics \( d \) and \( d' \), respectively. Let \( f: \mathbb{G} \to \mathbb{G}' \) be a homeomorphism.
	The following definitions of quasiconformal mappings on Carnot are equivalent.
	
	\begin{definition}[Metric Definition]
		The mapping \( f \) is called \emph{\(K\)-quasiconformal} if there exists a constant \( H \geq 1 \) such that for every point \( p \in \mathbb{G} \) and every sufficiently small \( r > 0 \), the following inequality holds:
		\[
		H(f(B(p, r))) := \frac{\sup\{ d'(f(p), f(q)) : d(p, q) \leq r \}}{\inf\{ d'(f(p), f(q)) : d(p, q) \geq r \}} \leq H.
		\]
		Here, \( B(p, r) \) denotes the metric ball of radius \( r \) centered at \( p \).
	\end{definition}
	
	\begin{definition}[Geometric Definition]
		The mapping \( f \) is called \emph{\(K\)-quasiconformal} if for every family of curves \( \Gamma \) in \( \mathbb{G} \), the moduli satisfy:
		\[
		\text{Mod}(f(\Gamma)) \leq K \cdot \text{Mod}(\Gamma).
		\]
		This definition emphasizes the conformal invariance of the modulus.
	\end{definition}
	
	\begin{definition}[Analytic Definition via Pansu Differential]
		The mapping \( f \) is called \emph{\(K\)-quasiconformal} if:
		\begin{enumerate}
			\item It belongs to the local Sobolev space \( W^{1, Q}_{\text{loc}}(\mathbb{G}, \mathbb{G}') \), where \( Q \) is the homogeneous dimension of \( \mathbb{G} \).
			\item It is Pansu differentiable almost everywhere.
			\item Its Pansu differential \( D f \) is a homogeneous homomorphism almost everywhere, and there exists a constant \( K \geq 1 \) such that almost everywhere:
			\[
			\|D f\|^Q \leq K \cdot J_f.
			\]
			Here, \( \|D f\| \) denotes the norm of the differential, and \( J_f \) is the Jacobian (volume derivative).
		\end{enumerate}
	\end{definition}

	\section{Pullback theorem for Sobolev mappings on Carnot groups}
	For convenience, we just take step three Carnot group as an example to showcase the result, as for higher step Carnot groups the conclusion holds as well. 
	
	Let  $G $ be a step three Carnot group. Let  $d_{1}, \ldots, d_{r}$  be a basis for  $\mathfrak{g}_{1}, e_{1}, \ldots, e_{s}$  a basis for  $\mathfrak{g}_{2} $, and  $f_{1}, \ldots, f_{t}$  a basis for  $\mathfrak{g}_{3}$ . We write
	\begin{equation*}
		\begin{array}{l}
			{\left[d_{i}, d_{j}\right]=\sum_{k=1}^{s} \alpha_{k}^{i j} e_{k}} \\
			{\left[d_{i}, e_{k}\right]=\sum_{m=1}^{t} \beta_{m}^{i k} f_{m}}
		\end{array}
	\end{equation*}
	with all other bracket relations trivial.
	As in the step two case, we can identify  $G$  with  $\mathbb{R}^{r+s+t}$  equipped with the following operation via coordinates of the first kind:
	\begin{equation*}
		\left(A_{i}, B_{k}, C_{m}\right) \star\left(a_{i}, b_{k}, c_{m}\right)=\left(\mathcal{A}_{i}, \mathcal{B}_{k}, \mathcal{C}_{m}\right)
	\end{equation*}
	where
	\begin{equation*}
		\begin{array}{l}
			\mathcal{A}_{i}=A_{i}+a_{i} \\
			\mathcal{B}_{k}=B_{k}+b_{k}+\frac{1}{2} \sum_{i<j} \alpha_{k}^{i j}\left(A_{i} a_{j}-a_{i} A_{j}\right) \\
			\mathcal{C}_{m}=C_{m}+c_{m}+\frac{1}{2} \sum_{i, j} \beta_{m}^{i j}\left(A_{i} b_{j}-B_{j} a_{i}\right)+\frac{1}{12} \sum_{l, k} \sum_{i<j}\left(A_{l}-a_{l}\right) \alpha_{k}^{i j}\left(A_{i} a_{j}-a_{i} A_{j}\right) \beta_{m}^{l k}
		\end{array}
	\end{equation*}
	$\operatorname{Observe}\left(A_{i}, B_{k}, C_{m}\right)^{-1}=\left(-A_{i},-B_{k},-C_{m}\right)$  and
	
	\begin{equation*}
		\left(A_{i}, B_{k}, C_{m}\right)^{-1} \star\left(a_{i}, b_{k}, c_{m}\right)=\left(\tilde{\mathcal{A}_{i}}, \tilde{\mathcal{B}_{k}}, \tilde{\mathcal{C}_{m}}\right),
	\end{equation*}
	where
	\begin{equation}
		\begin{array}{l}
			\tilde{\mathcal{A}}_{i}=a_{i}-A_{i} \\
			\tilde{\mathcal{B}_{k}}=b_{k}-B_{k}-\frac{1}{2} \sum_{i<j} \alpha_{k}^{i j}\left(A_{i} a_{j}-a_{i} A_{j}\right) \\
			\tilde{\mathcal{C}_{m}}=c_{m}-C_{m}-\frac{1}{2} \sum_{i, j} \beta_{m}^{i j}\left(A_{i} b_{j}-B_{j} a_{i}\right)+\frac{1}{12} \sum_{l, k} \sum_{i<j}\left(A_{l}+a_{l}\right) \alpha_{k}^{i j}\left(A_{i} a_{j}-a_{i} A_{j}\right) \beta_{m}^{l k}
		\end{array}
	\end{equation}\label{metric-coord}
	Left-translating the canonical basis at the origin, we obtain the left-invariant vector fields
	\begin{equation*}
		\begin{aligned}
			X^{i}= & \frac{\partial}{\partial A_{i}}-\frac{1}{2}\sum_{k=1}^{s} \sum_{i<j} \alpha_{k}^{i j} A_{j} \frac{\partial}{\partial B_{k}} \\
			& +\sum_{m=1}^{t}\left[-\frac{1}{2} \sum_{j=1}^{s} B_{j} \beta_{m}^{i j}+\frac{1}{12} \sum_{l=1}^{r} \sum_{k=1}^{s} A_{l}\left(\sum_{i<j} \alpha_{k}^{i j} A_{j}\right) \beta_{m}^{l k}\right] \frac{\partial}{\partial C_{m}} \\
			Y^{k}= & \frac{\partial}{\partial B_{k}}+\sum_{m=1}^{t}\left(\frac{1}{2} \sum_{i=1}^{r} \beta_{m}^{i k} A_{i}\right) \frac{\partial}{\partial C_{m}} \\
			Z^{m}= & \frac{\partial}{\partial C_{m}}
		\end{aligned}
	\end{equation*}
	It is clear that  $\left\{X^{i}\right\}_{i} \cup\left\{Y^{k}\right\}_{k} \cup\left\{Z^{m}\right\}_{m}$  forms a basis for  $\operatorname{Lie}\left(\mathbb{R}^{n}, \star\right) $. Moreover, we have the expected step three stratification of $ \operatorname{Lie}\left(\mathbb{R}^{n}, \star\right) $ :
	\begin{equation*}
		\operatorname{Lie}\left(\mathbb{R}^{r+s+t}, \star\right)=\left\langle X^{i}\right\rangle_{1 \leq i \leq r} \oplus\left\langle Y^{k}\right\rangle_{1 \leq k \leq s} \oplus\left\langle Z^{m}\right\rangle_{1 \leq m \leq t}
	\end{equation*}
	In fact, one can show using the Jacobi identity that the linear map  $\varphi: \operatorname{Lie}\left(\mathbb{R}^{n}, \star\right) \rightarrow \mathfrak{g}$  induced by
	\begin{equation*}
		X^{i} \mapsto d_{i}, \quad Y^{k} \mapsto e_{k}, \quad Z^{m} \mapsto f_{m}
	\end{equation*}
	is a Lie algebra isomorphism.
	The contact forms are given by
	\begin{equation*}
		\begin{aligned}
			\omega_{1}^{k} & :=d B_{k}-\frac{1}{2}\sum_{i=1}^{r} \sum_{i<j} \alpha_{k}^{i j}\left( A_{i}d A_{j}- A_{j}d A_{i}\right)  \\
			\omega_{2}^{m} & :=d C_{m}+\frac{1}{2}\sum_{i=1}^{r} \sum_{j=1}^{s} B_{j} \beta_{m}^{i j}d A_{i}-\frac{1}{12} \sum_{l=1}^{r} \sum_{k=1}^{s}\sum_{i<j} A_{l} \alpha_{k}^{i j}\left( A_{i}d A_{j}- A_{j}d A_{i}\right) \beta_{m}^{l k} 
		\end{aligned}
	\end{equation*}
	We have
	\begin{equation*}
		H\left(\mathbb{R}^{r+s+t}, \star\right)=\bigcap_{k=1}^{s} \operatorname{ker} \omega_{1}^{k} \cap \bigcap_{m=1}^{t} \operatorname{ker} \omega_{2}^{m}
	\end{equation*}
	so that a tangent vector $ v $ lies in  $H_{p}\left(\mathbb{R}^{r+s+t}, \star\right)$  if and only if  $\left(\omega_{1}^{k}\right)_{p}(v)=\left(\omega_{2}^{m}\right)_{p}(v)=0$  for all $ k$  and  $m $.
	
	Every step $n$ Carnot group has the relation that
	if  $K \subset \mathbb{G}$  is compact, then there is a constant $ C=C(K) \geq 1 $ such that
	\begin{equation}
		C^{-1}|p-q| \leq d_{c c}(p, q) \leq C|p-q|^{1 / n} \quad 
	\end{equation}\label{metric-compar}
	for all $ p, q \in K$ .
	
	We can define the quasi-metric 
	\begin{equation}\label{metric-K}
		d_K(p,q)=\sum_{i=1}^{n}\sum_{j=1}^{dim(V_i)}|\pi_{i}^j(q^{-1}p)|^{\frac{1}{i}}
	\end{equation}
	where $\pi_{i}^j$ is the projection to the i-th layer and the j-th coordinate.
	\begin{equation*}
		\begin{aligned}
			&\pi_{1}^i(q^{-1}p)=a_{i}-A_{i}\\
			&\pi_{2}^k(q^{-1}p):=\varphi_{1}^k(p, q)=b_{k}-B_{k}-\frac{1}{2} \sum_{i<j} \alpha_{k}^{i j}\left(A_{i} a_{j}-a_{i} A_{j}\right)\\
			&\pi_{3}^m(q^{-1}p):=\varphi_{2}^m(p, q)=c_{m}-C_{m}-\frac{1}{2} \sum_{i, j} \beta_{m}^{i j}\left(A_{i} b_{j}-B_{j} a_{i}\right)+\frac{1}{12} \sum_{l, k} \sum_{i<j}\left(A_{l}+a_{l}\right) \alpha_{k}^{i j}\left(A_{i} a_{j}-a_{i} A_{j}\right) \beta_{m}^{l k}
		\end{aligned}
	\end{equation*}
	
	It is worth to mention that $d_K$ is comparable with C-C metric on Carnot group.
	
	It is known that for Pansu differentiable map $f$  the Pansu differential is a Lie algebra homorophism. Thus, if the vector field $X$ and contact form $\omega$ are not from the same layer, then $X(f_P^*\omega)=0$. For instance, on Heisenberg group $\mathbb{H}^1$
	\begin{equation*}
		\begin{aligned}
			D_Pf
			&=\begin{bmatrix}
				X_1(f_P^*dx_1) & X_1(f_P^*dx_2) & 0\\
				X_2(f_P^*dx_1) & X_2(f_P^*dx_2) & 0\\
				0 & 0 & Z(f_P^*\omega_{1})\\
			\end{bmatrix}
		\end{aligned}
	\end{equation*}
	Although the Pansu differential of Hölder continuous maps do not exist, we can generalize the Pansu differential with the help of mollification.
	
	\begin{thm}\label{thm-approx}
		Suppose that  $f \in W^{1,q}\left(\Omega ; G_2\right)$ , where $ G_2$ is a step-2 Carnot group and $\nu_2$ is the topological  dimension of $G_2$, $\Omega \subset G_1$  is open , $X$ is a left-invariant  vector field belonging to the horizontal layer and $q\geq2$ . If  $B\left(x_{o}, 2 r\right) \subset \Omega $, then  for all  $0<\varepsilon<r$,
		\begin{equation}\label{approx}
				D_Hf_\varepsilon \to D_Hf \  \text{in}\  L_{loc}^\frac{q}{2}(\Omega;\mathbb{R}^{\nu_2}).\\
		\end{equation}
		where the convergence is in the sense of the norm on Euclidean space $\mathbb{R}^{\nu_2}$, $f_{\varepsilon}=f * \eta_{\varepsilon}=\int_{\mathbb{G}_1 } f(y) \eta_{\varepsilon}\left(y^{-1} x\right) d y$  and $\eta_{\varepsilon}$ is a smooth function on Carnot group $G_1$ with a compact supported set $B(e,\varepsilon)$ and
		\begin{equation*}
			\int_{G_1}\eta_{\varepsilon}=1.
		\end{equation*}
		\begin{proof}
			Recall that  $\varphi(p, q)$  was defined in \eqref{metric-K}. Let  $p \in B\left(x_{o}, r\right)$  and let  $\varepsilon \in(0, r)$ . In what follows we will identify  $\left(f_{\varepsilon}^{*} \alpha\right)(p)$  with the vector (of equal length):
			\begin{equation*}
				\begin{array}{l}
					X(f_{\varepsilon}^{*} \omega_{1}^k)(p)=X( f_{\varepsilon}^{B_{k}})(p)+\frac{1}{2}\alpha_{k}^{ij} \sum_{i<j}\left(f_{\varepsilon}^{A_{j}}(p) X(f_{\varepsilon}^{A_{i}})(p)-f_{\varepsilon}^{A_{i}}(p) X( f_{\varepsilon}^{A_{j}})(p)\right)= \\
					\varepsilon^{-\nu_1-1} \int_{B_{\varepsilon}}\left(\left(f^{B_{k}}(z^{-1}p)-f_{\varepsilon}^{B_{k}}(p)+\frac{1}{2}\alpha_{k}^{ij} \sum_{i<j}\left(f_{\varepsilon}^{A{j}}(p) f^{A_{i}}(z^{-1}p)-f_{\varepsilon}^{A_{i}}(p) f^{A_{j}}(z^{-1}p)\right)\right) X(\eta\left(\delta_{\varepsilon^{-1}}(z)\right)) d z\right. \\
					=\varepsilon^{-\nu_1-1} \int_{B_{\varepsilon}} \varphi_{k}\left(f(z^{-1}p), f_{\varepsilon}(p)\right) X(\eta\left(\delta_{\varepsilon^{-1}}(z)\right)) d z \\
					=\varepsilon^{-\nu_1-1} \int_{B_{\varepsilon}} \int_{B_{\varepsilon}} \varphi_{k}(f(z^{-1}p), f(w^{-1}p)) \eta_{\varepsilon}(w) X(\eta\left(\delta_{\varepsilon^{-1}}(z)\right)) d w d z
				\end{array}
			\end{equation*}
			In the second equality we used the fact that $ \int_{B_{\varepsilon}} f_{\varepsilon}^{t_{k}}(p) X( \eta\left(\delta_{\varepsilon^{-1}}(z)\right)) d z=0 $. Easy verification of the last equality is left to the reader. Note that \eqref{metric-K} yields
			\begin{equation*}
				|\varphi_{k}(f(z^{-1}p), f(w^{-1}p))| \leq d_{K}(f(z^{-1}p), f(w^{-1}p))^{2} 
			\end{equation*}
			Therefore,
			\begin{equation*}
				\begin{aligned}
					\left|X(f_{\varepsilon}^{*} \omega_{1}^k)(p)\right| 
					&\leq\varepsilon^{-\nu_1-1} \int_{B_{\varepsilon}} \int_{B_{\varepsilon}} \varphi_{k}(f(z^{-1}p), f(w^{-1}p)) \eta_{\varepsilon}(w) X(\eta\left(\delta_{\varepsilon^{-1}}(z)\right)) d w d z\\
					&\leq \varepsilon^{-\nu_1-1} \int_{B_{\varepsilon}} \int_{B_{\varepsilon}} d_{K}(f(z^{-1}p), f(w^{-1}p))^{2} \eta_{\varepsilon}(w) X(\eta\left(\delta_{\varepsilon^{-1}}(z)\right)) d w d z\\
					&\lesssim \varepsilon^{-\nu_1-1} \int_{B_{\varepsilon}} \int_{B_{\varepsilon}} d_{cc}(f(z^{-1}p), f(w^{-1}p))^{2} \eta_{\varepsilon}(w) X(\eta\left(\delta_{\varepsilon^{-1}}(z)\right)) d w d z\\
					&\lesssim \varepsilon^{-2 \nu_1-1} \int_{B_{\varepsilon}} \int_{B_{\varepsilon}} d_{cc}(f(z^{-1}p), f(w^{-1}p))^{2}  d w d z\\
				\end{aligned}
			\end{equation*}
			denote $x=z^{-1}p$ and $y=w^{-1}p$,
			thus $z^{-1},w^{-1}\in B_\varepsilon^{-1}=B_\varepsilon$ and $z^{-1}p,w^{-1}p\in B_\varepsilon \cdot p$. Since the Harr measure $dw$ and $dz$ is invariant under left and right transformation,
			\begin{equation*}
				\begin{aligned}
					&\varepsilon^{-2 \nu_1-1} \int_{B_{\varepsilon}} \int_{B_{\varepsilon}} d_{cc}(f(z^{-1}p), f(w^{-1}p))^{2}  d w d z\\
					&=\varepsilon^{-2 \nu_1-1} \int_{B_{\varepsilon}\cdot p} \int_{B_{\varepsilon}\cdot p} d_{cc}(f(x), f(y))^{2}  d x d y\\
					&\leq \varepsilon^{-2\nu_1-1}\cdot C \cdot \varepsilon^2 \int_{B_{\varepsilon}}  \int_{\lambda B_{\varepsilon}(p)} \left|D_H f\right|^2  d x d y\\
					&= C \cdot \varepsilon^{-\nu_1+1}     \int_{\lambda B_{\varepsilon}(p)} \left|D_H f\right|^2  d x \\
				\end{aligned}
			\end{equation*}
			in the inequality we use the $(1,2)$-Poincaré inequality and there is a constant $\lambda$ such that $B_\varepsilon \cdot p \subset \lambda B_\varepsilon(p)$. We define the Hardy-Littlewood maximal function 
			\begin{equation}\label{maximal-g}
				\mathcal{M}(g)(p):= \sup_{r>0} \frac{1}{|B(p,r)|}\int_{B(p,r)} |g(x)|dx.
			\end{equation}   
			Then
			\begin{equation*}
				\begin{aligned}
					\left|X(f_{\varepsilon}^{*} \omega_{1}^k)(p)\right| 
					&\lesssim C \cdot \varepsilon^{-\nu_1+1}     \int_{\lambda B_{\varepsilon}(p)} \left|D_H f\right|^2  d x \\
					&\lesssim \varepsilon \mathcal{M}(\left|D_H f\right|^2)(p) 
				\end{aligned}
			\end{equation*}
			and for compact subset $W$,
			\begin{equation*}
				\begin{aligned}
					\|X(f_{\varepsilon}^{*} \omega_{1}^k)\|_{L^{\frac{q}{2}}(W)} 
					&= \left(\int_{W}\left|X(f_{\varepsilon}^{*} \omega_{1}^k)\right|^\frac{q}{2} \right)^\frac{2}{q} \\
					&\lesssim \left(\int_{W}\left|\varepsilon \mathcal{M}(\left|D_H f\right|^2)\right|^\frac{q}{2} \right)^\frac{2}{q} \\ 
					&= \varepsilon \|\mathcal{M}(\left|D_H f\right|^2) \|_{L^{\frac{q}{2}}(W)}\\
					&\leq C_q \cdot\varepsilon \|\left|D_H f\right|^2 \|_{L^{\frac{q}{2}}(W)}\\
					&=C_q \cdot \varepsilon \|D_H f \|_{L^{q}(W)}^2
				\end{aligned}
			\end{equation*}
			Thus, $X(f_{\varepsilon}^{*} \omega_{1}^k)\rightarrow 0$ in $L^\frac{q}{2}(W)$ for $q>2$. As for $q=2$,
			\begin{equation*}
				\begin{aligned}
					\|X(f_{\varepsilon}^{*} \omega_{1}^k)\|_{L^1(W)}   
					&\leq C \cdot \varepsilon^{-\nu_1+1}    \int_{W} \int_{\lambda B_{\varepsilon}(p)} \left|D_H f\right|^2  d x dp \\
					&= C \cdot \varepsilon^{-\nu_1+1}    \int_{G_1} \int_{G_1} \left|D_H f\right|^2(x) \cdot I_{\lambda B_{\varepsilon}(p)}(x)\cdot I_W(p)  d x dp \\
					&= C \cdot \varepsilon^{-\nu_1+1}    \int_{G_1} \int_{G_1} \left|D_H f\right|^2(x) \cdot I_{\lambda B_{\varepsilon}(x)}(p)\cdot I_W(p)   dp d x\\
					&\lesssim C \cdot \varepsilon     \int_{G_1} \left|D_H f\right|^2(x)  d x\\
					&\to 0.
				\end{aligned}
			\end{equation*}
			
			For horizontal layer,
			\begin{equation*}
				\begin{aligned}
					Xf_\varepsilon^*(dx_j)(p)&=X(f_\varepsilon^{x_j})(p)\\
					&= \varepsilon^{-\nu_1-1}  \int_{B_{\varepsilon}} \left(f^{x_j} (z^{-1}p)-f^{x_j} (p)\right) X(\eta\left(\delta_{\varepsilon^{-1}}(z)\right))dz\\
					&\lesssim \varepsilon^{-\nu_1-1}  \int_{B_{\varepsilon}} d_{cc}\left(f (z^{-1}p),f (p)\right) X(\eta\left(\delta_{\varepsilon^{-1}}(z)\right))dz\\
					& \leq \varepsilon^{-\nu_1-1}  \int_{B_{\varepsilon}} d_{cc}\left(f (z^{-1}p),f (p)\right) dz\\
					& \leq \varepsilon^{-\nu_1-1} \cdot C \varepsilon \int_{\lambda B_{\varepsilon}(p)} |D_H f|\\
					& \lesssim \varepsilon^{-\nu_1} \int_{\lambda B_{\varepsilon}(p)} |D_H f| \\
					& = \mathcal{M}(\left|D_H f\right|)(p) 
				\end{aligned}
			\end{equation*}
			where we use the $(1,1)$-Poincaré inequality.
			Thus, 
			\begin{equation*}
				\begin{aligned}
					\|Xf_\varepsilon^*(dx_j)\|_{L^{q}(W)} 
					&\lesssim \|\mathcal{M}(\left|D_H f\right|)\|_{L^{q}(W)} \\
					&\leq C_q \| D_H f \|_{L^{q}(W)}. 
				\end{aligned}
			\end{equation*}
			Additionally, we have
			\begin{equation*}
				\begin{aligned}
					\|Xf_\varepsilon^{x_j}(p)\|& \leq \int_{B_{\varepsilon}} |Xf^{x_j}(z^{-1}p)| \eta_{\varepsilon}(z) dz\\
					&= \int_{B_{\varepsilon}} |Xf^{x_j}(z^{-1}p)| \eta_{\varepsilon}^{\frac{1}{q}}(z)\cdot \eta_{\varepsilon}^{1-\frac{1}{q}}(z) dz\\
					&\leq \left(\int_{B_{\varepsilon}} \left|Xf^{x_j}(z^{-1}p)\right|^q \eta_{\varepsilon}(z) dz\right)^{\frac{1}{q}} \left(\int_{B_{\varepsilon}}\eta_{\varepsilon}(z) dz \right)^{1-\frac{1}{q}}
				\end{aligned}
			\end{equation*}
			then 
			\begin{equation*}
				\begin{aligned}
					\int_V \left|Xf_\varepsilon^{x_j}(x)\right|^q dx &\leq \int_{B_{\varepsilon}} \eta_{\varepsilon}(z) \int_V \left|Xf^{x_j}(z^{-1}x)\right|^q dx dz\\
					& \leq \int_W \left|Xf^{x_j}(y)\right|^q dy
				\end{aligned}
			\end{equation*}
			Because $Xf^{x_j}\in L^q(W)$, there is a continuous function $g $ such that $\|Xf^{x_j}-g \|_{L^q(W)}\leq \delta$. Therefore, 
			\begin{equation*}
				\|Xf_\varepsilon^{x_j}-g_\varepsilon \|_{L^q(W)}= \|\left(Xf^{x_j}-g\right)* \eta_{\varepsilon} \|_{L^q(W)} \leq \delta.
			\end{equation*}
			Thus,
			\begin{equation*}
				\begin{aligned}
					\| Xf^{x_j}-Xf_\varepsilon^{x_j} \|_{L^q(V)}&\leq \| Xf^{x_j}-g \|_{L^q(V)}+\| g-g_\varepsilon \|_{L^q(V)} +\| Xf_\varepsilon^{x_j}-g_\varepsilon \|_{L^q(V)}\\
					& \leq 2\delta + \| g-g_\varepsilon \|_{L^q(V)}\\
					& \leq 3\delta
				\end{aligned}
			\end{equation*}
			which means $Xf_\varepsilon^{x_j}\rightarrow Xf^{x_j}$ in $L^q$.

		\end{proof}	
	\end{thm}	
	
	\begin{remark}
		There are some important observations :\\
		(1) The convergence of  $X(f_{\varepsilon}^{*} \omega_{1}^k)\rightarrow 0$ in $L^\frac{q}{2}(W)$ does not depend on the choice of compact support of smoothing kernel, since in the proof we use Hardy-Littlewood maximal function.\\
		(2) In theorem \ref{thm-approx}, $p\geq2$ needs to hold. From the proof, a very natural conjecture is that $X(f_{\varepsilon}^{*} \omega_{1}^k)\nrightarrow 0$ for $p<2$ as long as the variation of $f$ in the directions of vector field in the second layer is comparable to the total variation. 
		
		For convenience, we just show the behavior on Heisenberg group $\mathbb{H}^1$. A simple case is that the image is contained in the following area when near singular point.
		\begin{equation*}
			\mathcal{E}_\delta:=\left\{z>\delta(x^2+y^2)\right\}
		\end{equation*}
		for some $\delta>0$.
		That means locally  
		\begin{equation*}
			\begin{aligned}
				d_{K}(f(z^{-1}p), f(w^{-1}p))^{2}\lesssim \varphi_{k}(f(z^{-1}p), f(w^{-1}p)).
			\end{aligned}
		\end{equation*}
		Then
		\begin{equation*}
			\begin{aligned}
				\left|X(f_{\varepsilon}^{*} \omega_{1}^k)(p)\right| 
				&\approx \varepsilon^{-\nu_1-1} \int_{B_{\varepsilon}} \int_{B_{\varepsilon}} \varphi_{k}(f(z^{-1}p), f(w^{-1}p)) \eta_{\varepsilon}(w) X(\eta\left(\delta_{\varepsilon^{-1}}(z)\right)) d w d z\\
				&\approx \varepsilon^{-\nu_1-1} \int_{B_{\varepsilon}} \int_{B_{\varepsilon}} d_{K}(f(z^{-1}p), f(w^{-1}p))^{2} \eta_{\varepsilon}(w) X(\eta\left(\delta_{\varepsilon^{-1}}(z)\right)) d w d z\\
				&\approx \varepsilon^{-2 \nu_1-1} \int_{B_{\varepsilon}} \int_{B_{\varepsilon}} d_{cc}(f(z^{-1}p), f(w^{-1}p))^{2}  d w d z\\
			\end{aligned}
		\end{equation*} 
		If we can find a mapping $f$ that its oscillation controls its upper gradient 
		\begin{equation*}
			C \cdot \varepsilon^2  \int_{\lambda B_{\varepsilon}(p)} \left|D_H f\right|^2  d x d y
			\leq  \int_{B_{\varepsilon}\cdot p} d_{cc}(f(x), f(y))^{2}  d x d y
		\end{equation*}
		then
		\begin{equation*}
			\left|X(f_{\varepsilon}^{*} \omega_{1}^k)(p)\right|\approx C \cdot \varepsilon^{-\nu_1+1}     \int_{\lambda B_{\varepsilon}(p)} \left|D_H f\right|^2  d x 
		\end{equation*}
		and
		\begin{equation*}
			\begin{aligned}
				\|X(f_{\varepsilon}^{*} \omega_{1}^k)\|_{L^1(W)}   
				&\approx C \cdot \varepsilon^{-\nu_1+1}    \int_{W} \int_{\lambda B_{\varepsilon}(p)} \left|D_H f\right|^2  d x dp \\
				&= C \cdot \varepsilon^{-\nu_1+1}    \int_{G_1} \int_{G_1} \left|D_H f\right|^2(x) \cdot I_{\lambda B_{\varepsilon}(p)}(x)\cdot I_W(p)  d x dp \\
				&= C \cdot \varepsilon^{-\nu_1+1}    \int_{G_1} \int_{G_1} \left|D_H f\right|^2(x) \cdot I_{\lambda B_{\varepsilon}(x)}(p)\cdot I_W(p)   dp d x\\
				&\approx  C \cdot \lambda^{\nu_1} \varepsilon     \int_{W} \left|D_H f\right|^2(x)  d x\\
			\end{aligned}
		\end{equation*}
		which shows that it fails to converge in case $f\notin W^{1,2}(\mathbb{H}^1;\mathbb{H}^1)$.\\
		(3) $\mathcal{E}_\delta$ to some extent is a good condition for constructing a counterexample. 
		
		A opposite case with respect to domain $\mathcal{E}_\delta$ is that the image of mapping $f$ near singular point is controlled above in the $Z$-direction. Because of (3.20) in Corollary 3.16 from \cite{2025arXiv250311506H}, at best, the image of a horizontal curve would increase at a rate proportional to the square in the $Z$-direction, if the the image increase at a rate proportional to the first power in the horizontal directions. That is to say, whether $X(f_{\varepsilon}^{*} \omega_{1}^k)\rightarrow 0$ in $L^\frac{q}{2}(W)$ holds depends on whether we can get a good control for $\varphi_{k}(f(z^{-1}p), f(w^{-1}p))$. \\
		(4)Linear control. If there is a control over $\varphi_{k}(f(z^{-1}p), f(w^{-1}p))$ such that $d_{cc}(f(z^{-1}p), f(w^{-1}p))$ increase nearly linearly. That is to say when $d_{cc}(f(z^{-1}p), f(w^{-1}p))$ large enough, the image of $f$ near singular point is in the domain $\mathcal{L}_\delta$. For convenience, suppose that $f\in W^{1,1}(\mathbb{H}^n; \mathbb{H}^n)$. 
		\begin{equation*}
			\mathcal{L}_\delta:=\left\{z\leq \delta (x_1^2+\dots+x_n^2+y_1^2+\dots+y_n^2)^\frac{1}{2}:=r\right\}.
		\end{equation*}
		Thus,
		\begin{equation*}
			\varphi_{k}(f(z^{-1}p), f(w^{-1}p)) \lesssim \sqrt{1+\delta^2}d_{cc}(f(z^{-1}p), f(w^{-1}p))
		\end{equation*}
		and
		\begin{equation*}
			\begin{aligned}
				\left|X(f_{\varepsilon}^{*} \omega_{1}^k)(p)\right| 
				&\leq\varepsilon^{-\nu_1-1} \int_{B_{\varepsilon}} \int_{B_{\varepsilon}} \varphi_{k}(f(z^{-1}p), f(w^{-1}p)) \eta_{\varepsilon}(w) X(\eta\left(\delta_{\varepsilon^{-1}}(z)\right)) d w d z\\
				&\lesssim \varepsilon^{-2 \nu_1-1} \int_{B_{\varepsilon}} \int_{B_{\varepsilon}} d_{cc}(f(z^{-1}p), f(w^{-1}p))  d w d z\\
				&\leq  C \cdot \lambda^{\nu_1} \varepsilon     \int \left|D_H f\right|(x)  d x\\
				&\to 0 \ \text{in} \ L^1
			\end{aligned}
		\end{equation*}
		It is worth to mention that the domain $\mathcal{L}_\delta$ can not only be defined for step-2 Carnot group. For higher step Carnot group, we can define $\mathcal{L}_\delta$ to restrict the increase near singular point such that it increases linearly. Then $D_Hf_\varepsilon \to D_Hf$ in $L^1$ even if $f\in W^{1,1}$.\\
		(5) A typical example of (4) is lifting by central extension on Carnot groups which is not only increase linearly, but even preserve the $z$-coordinate.
	\end{remark}

	\begin{cor}
		Let $\mathbb{R}_1^{k_1} \to G_1 \to \mathbb{R}_1^{k_2}$ and $\mathbb{R}_2^{k_1} \to G_2 \to \mathbb{R}_2^{k_2}$ be central extensions of Carnot groups such that $\mathrm{rank}(G_2) = \mathrm{rank}(\mathbb{R}_2^{k_2})$. Let $U_1 \subset \mathbb{R}_1^{k_2}$ be a domain and $F: \pi_1^{-1}(U_1) \to G_2$ a Sobolev contact lift of a sobolev contact map $f: U_1 \to \mathbb{R}_2^{k_2}$ and $f\in W^{1,1}$. Then for all  $0<\varepsilon<r$,
		\begin{equation}
			D_HF_\varepsilon \to D_HF \  \text{in}\  L_{loc}^1(\Omega;\mathbb{R}^{\nu_2}).\\
		\end{equation}
	\end{cor}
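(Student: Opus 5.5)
The plan is to run the proof of Theorem \ref{thm-approx} again, exploiting that a contact lift $F$ has linear growth in the vertical direction, as in Remark (4)--(5). Write points of $G_2$ as $(x,z)$ with $x\in\mathbb{R}_2^{k_2}$ and $z\in\mathbb{R}_2^{k_1}$. Because $\mathrm{rank}(G_2)=\mathrm{rank}(\mathbb{R}_2^{k_2})$, the horizontal layer of $G_2$ projects isomorphically onto that of $\mathbb{R}_2^{k_2}$. The components of $D_HF_\varepsilon$ then split into two kinds: the horizontal ones $X(F_\varepsilon^{x_j})$, and the contact-form ones $X(F_\varepsilon^*\omega)$, where $\omega=dz-\frac12\rho(x,dx)$ comes from the cocycle of the central extension $G_2$.

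\emph{Horizontal components.} These do not use the step structure at all. I will reuse the last part of the proof of Theorem \ref{thm-approx} essentially verbatim, with $q=1$. Young's inequality gives
\[
\|XF^{x_j}_\varepsilon\|_{L^1(V)}\le \|XF^{x_j}\|_{L^1(W)} .
\]
Then I approximate $XF^{x_j}$ in $L^1(W)$ by a continuous $g$ and use the triangle inequality. Together these give $XF_\varepsilon^{x_j}\to XF^{x_j}$ in $L^1_{loc}$. This needs only $F\in W^{1,1}$, which follows from $f\in W^{1,1}$ because the horizontal derivative of the lift is determined by that of $f$.

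\emph{Contact components.} As in the proof of Theorem \ref{thm-approx}, I write
\[
X(F_\varepsilon^*\omega)(p)=\varepsilon^{-\nu_1-1}\iint \varphi\bigl(F(z^{-1}p),F(w^{-1}p)\bigr)\,\eta_\varepsilon(w)\,X\eta(\delta_{\varepsilon^{-1}}z)\,dw\,dz .
\]
The crucial new input is a \emph{linear} bound
\[
|\varphi(F(a),F(b))|\lesssim d_{cc}(F(a),F(b)) \quad\text{for } a,b \text{ in a compact set},
\]
replacing the quadratic bound $d_K^2$. This is Remark (4)--(5): the lift is $F(x,z)=(f(x),z+h(x))$ with $h$ determined by the contact condition. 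Using this, the vertical coordinate of $F(b)^{-1}F(a)$ is controlled by $|z_a-z_b|$ plus a cocycle term bilinear in $f(x_a),f(x_b)$. I plan to show these sum to at most a constant times $d_{cc}(F(a),F(b))$, using that $\varphi$ vanishes to first order on the diagonal and that $F$ is locally bounded on the set in question; by local boundedness I mean $f\in L^\infty_{loc}$, and if that fails I will truncate.

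With the linear bound, the $(1,1)$-Poincar\'e inequality gives
\[
|X(F_\varepsilon^*\omega)(p)|\lesssim \varepsilon^{-2\nu_1-1}\iint_{B_\varepsilon\cdot p} d_{cc}(F(x),F(y))\,dx\,dy \lesssim \varepsilon^{-\nu_1}\int_{\lambda B_\varepsilon(p)}|D_HF| .
\]
Integrating over $W$ and applying Fubini exactly as in the $q=2$ case yields $\|X(F_\varepsilon^*\omega)\|_{L^1(W)}\lesssim \int_{W'}|D_HF|$. On the other hand, the Pansu-type component of $D_HF$ in the contact direction is $0$, because $F$ is a contact map.

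I expect the main obstacle to be that this $L^1$ bound gives boundedness but no decay. The quadratic estimate produced a spare factor $\varepsilon$, and the linear one does not. To get convergence to $0$, I will use the \emph{exact} contact structure of the lift rather than only growth. Since $F^*\omega=0$ for the Sobolev contact lift and $\omega$ restricted to $F$ is linear in $dF$ with coefficients built from $F$, I can write
\[
X(F_\varepsilon^*\omega)=\bigl(X(F^*\omega)\bigr)_\varepsilon+\text{commutator},
\]
where the first term vanishes. The commutator has the form $(c\,XF)_\varepsilon-c_\varepsilon\,XF_\varepsilon$ with $c$ a linear function of $F^{x}$. It tends to $0$ in $L^1_{loc}$ by the standard Friedrichs commutator lemma: approximate $c$ locally uniformly by continuous functions (truncating if needed), then use the uniform $L^1$ bound above to control the error.
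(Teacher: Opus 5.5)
Your route differs from the paper's. The paper cites the equivariance $F(gk)=F(g)\Phi(k)$ of contact lifts (Lemma 6.5 of \cite{2025arXiv250814647H}) and concludes from it that $X(F_\varepsilon^*\omega)\to0$ uniformly, with an explicit bound $<3\varepsilon$ in a Heisenberg example. Your horizontal part is the same as the paper's, namely the end of the proof of Theorem \ref{thm-approx}. Equivariance only describes $F$ along the central fibres, so your mechanism is the more transparent one: $X(F^*\omega)=0$, i.e.\ $XF^z=\tfrac12\rho(F^x,XF^x)$, plus a commutator. Set $K_\varepsilon(c,h):=(ch)_\varepsilon-c_\varepsilon h_\varepsilon$ and $\rho(a,b)=\sum_{i,j}\rho_{ij}a_ib_j$. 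Then $X(F_\varepsilon^*\omega)=\tfrac12\sum_{i,j}\rho_{ij}K_\varepsilon(F^{x_i},XF^{x_j})$, provided each product $F^{x_i}XF^{x_j}$ is locally integrable. Two remarks on the set-up:
\begin{itemize}
\item This identity needs $F_\varepsilon(p)=\int F(z^{-1}p)\eta_\varepsilon(z)\,dz$, which is the form the paper's computations actually use. For the displayed convolution one has $X(F*\eta_\varepsilon)=F*(X\eta_\varepsilon)$, which in general is not $(XF)*\eta_\varepsilon$.
\item The ``linear bound'' is a detour. On a set where $F$ is bounded it follows from $|\varphi|\lesssim d_{cc}^2$, it is weaker at small scales, and it gives no decay, as you noticed.
\end{itemize}

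The genuine gap is local boundedness of $c=F^x=f\circ\pi_1$, on which every step of your contact argument rests. The hypothesis $f\in W^{1,1}(U_1)$ on a domain of $\mathbb{R}^{k_2}$, $k_2\ge2$, only gives $f\in L^{k_2/(k_2-1)}_{loc}$. Without $f\in L^\infty_{loc}$, the individual products $F^{x_i}XF^{x_j}$ need not be integrable. Only the antisymmetric combination $\rho(F^x,XF^x)=2XF^z$ is, and that is part of the hypothesis that $F$ is Sobolev, not a consequence of $f\in W^{1,1}$. So the commutator cannot be split into scalar commutators, $\rho(F^x_\varepsilon,(XF^x)_\varepsilon)$ has no a priori $L^1$ bound uniform in $\varepsilon$, and the Friedrichs lemma has nothing to act on. Neither remedy you mention closes this:
\begin{itemize}
\item Truncating $f$ destroys the lifting condition (for $\mathbb{H}^1$-targets, constancy of the Jacobian). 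The truncated map then has no contact lift, and you lose $F^*\omega=0$.
\item Truncating only the coefficient, $c=T_Mc+(c-T_Mc)$ with $T_M$ the truncation at level $M$, leaves $K_\varepsilon(c-T_Mc,XF^x)$. Making this small uniformly in $\varepsilon$ requires exactly the missing integrability of $(c-T_Mc)XF^x$ and of $(c-T_Mc)_\varepsilon(XF^x)_\varepsilon$.
\item The ``uniform $L^1$ bound above'' cannot substitute. Its constant depends on $\sup|F|$, and it bounds the nonlinear quantity $X(F_\varepsilon^*\omega)$, not a bilinear error term.
\end{itemize}

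Even for bounded $f$ your last step is misstated: a discontinuous $c$ cannot be approximated locally uniformly by continuous functions. Approximate $h=XF^{x_j}$ instead. From
$K_\varepsilon(c,h)(p)=\int\bigl(c(z^{-1}p)-c_\varepsilon(p)\bigr)h(z^{-1}p)\eta_\varepsilon(z)\,dz$
you get $\|K_\varepsilon(c,h)\|_{L^1(V)}\le2\|c\|_{L^\infty(W)}\|h\|_{L^1(W)}$. For continuous $\tilde h$, the equivalent form $\int c(z^{-1}p)\bigl(\tilde h(z^{-1}p)-\tilde h_\varepsilon(p)\bigr)\eta_\varepsilon(z)\,dz$ tends to $0$ uniformly on compact sets. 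With this fix your proposal proves the corollary for $f\in W^{1,1}\cap L^\infty_{loc}$, and via H\"older for $f\in W^{1,p}\cap L^{p/(p-1)}_{loc}$. The stated $W^{1,1}$ case needs either that extra hypothesis or a genuine compensation argument exploiting the antisymmetry of $\rho$ together with the lifting condition on $f$. Your proposal does not provide either.
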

	\begin{proof}
		By lemma 6.5 in \cite{2025arXiv250814647H}, there exists a Lie group homomorphism $\Phi: \mathbb{R}_1^{k_1} \to \mathbb{R}_2^{k_1}$ such that $F(gk) = F(g)\Phi(k)$ for all $g \in \pi_1^{-1}(U_1)$ and $k \in V_1$, which means $\Phi$ preserve the coordinates in the second layer.
		Therefore, the $\left|X(F_{\varepsilon}^{*} \omega_{1})(p)\right| $ would goes to zero uniformly as $\varepsilon \to 0$.
		
		To be specific, we show the proof of this corollary in the case of Heisenberg group. For more general cases, proof is the same. 
		 
		Suppose that lifting $F:\widetilde{\Omega} \to \mathbb{H}^1$ of mapping $f:\Omega \to \mathbb{R}^2$ where $\Omega:=\left\{(x,y)\in \mathbb{R}^2 | 0<x<1, -1<y<1 \right\}$ and $\widetilde{\Omega}:=\left\{(x,y,z)\in \mathbb{H}^1 | 0<x<1, -1<y<1 \right\}$.
		\begin{equation*}
			f(x,y):=\left(x,y+ln(x)\right) \in W^{1,1}
		\end{equation*}
		In this case, mollification $F_\varepsilon  \to F$ even though $F\in W^{1,1}(\widetilde{\Omega} ; \mathbb{H}^1)$  
		Obviously, $f^*(dx\wedge dy)=dx\wedge dy$. By example 10.2 in \cite{2025arXiv250814647H}, we know that $f$ admits a contact lift $F$. And also $F$ preserve the $z$-coordinate since lemma 6.5 in \cite{2025arXiv250814647H}. Thus,
		\begin{equation*}
			\begin{aligned}
				\left|X(F_{\varepsilon}^{*} \omega_{1})(p)\right| 
				&\lesssim \varepsilon^{-\nu_1-1} \int_{B_{\varepsilon}} \int_{B_{\varepsilon}} \varphi(F(z^{-1}p), F(w^{-1}p)) \eta_{\varepsilon}(w) X(\eta\left(\delta_{\varepsilon^{-1}}(z)\right)) d w d z \\
				&< 3 \varepsilon.
			\end{aligned}
		\end{equation*} 
	\end{proof}
	
	The techniques developed above for step-two Carnot groups-specifically the use of the quasi-metric $d_K$, the Poincaré inequalities (1,1) and (1,2), and the Hardy-Littlewood maximal function estimates-lay the groundwork for the general step-$n$ case.  The convergence rate of the mollified contact forms will degrade proportionally to the step, leading to the critical exponent $q>n+1$. 
	Therefore, for higher step Carnot group $ G_2$, we can get a similar result with more technical and complicated scaling.
	
	\begin{thm}\label{thm-approx-higher}
		Suppose that  $f \in W^{1,q}\left(\Omega ; G_2\right)$ , where $ G_2$ is a step-$n$ Carnot group and $\nu_2$ is the topological dimension of $G_2$, $\Omega \subset G_1$  is open , $X$ is a left-invariant  vector field belonging to the horizontal layer and $q>n+1$ . If  $B\left(x_{o}, 2 r\right) \subset \Omega $, then  for all  $0<\varepsilon<r$,
		\begin{equation}\label{approx-higher}
			D_Hf_\varepsilon \to D_Hf \  \text{in}\  L_{loc}^\frac{q}{n+1}(\Omega;\mathbb{R}^{\nu_2}).\\
		\end{equation}
		where the convergence is in the sense of the norm on Euclidean space $\mathbb{R}^{\nu_2}$, $f_{\varepsilon}=f * \eta_{\varepsilon}=\int_{\mathbb{G}_1 } f(y) \eta_{\varepsilon}\left(y^{-1} x\right) d y$  and $\eta_{\varepsilon}$ is a smooth function on Carnot group $G_1$ with a compact supported set $B(e,\varepsilon)$ and
		\begin{equation*}
			\int_{G_1}\eta_{\varepsilon}=1.
		\end{equation*}
		\begin{proof}
			\begin{equation*}
				\begin{aligned}
					\tilde{\mathcal{C}_{m}}&=\varphi_{2}^m (\left(a_{i}, b_{k}, c_{m}\right),\left(A_{i}, B_{k}, C_{m}\right))\\
					&=c_{m}-C_{m}-\frac{1}{2} \sum_{i, j} \beta_{m}^{i j}\left(A_{i} b_{j}-B_{j} a_{i}\right)+\frac{1}{12} \sum_{l, k} \sum_{i<j}\left(A_{l}+a_{l}\right) \alpha_{k}^{i j}\left(A_{i} a_{j}-a_{i} A_{j}\right) \beta_{m}^{l k}.
				\end{aligned}
			\end{equation*}
			It is easy to find the following result.
			\begin{equation*}
				\begin{aligned}
					&X(f_{\varepsilon}^{*} \omega_{2}^m)+X\left(f_{\varepsilon}^{*}\left(-\frac{1}{2} \sum_{i, j} \beta_{m}^{i j}A_{i} dB_{j}\right)\right)+X\left(f_{\varepsilon}^{*}\left(\frac{1}{12} \sum_{l, k} \sum_{i<j}A_{l} \alpha_{k}^{i j}\left(A_{i} dA_{j}-A_{j} dA_{i}\right) \beta_{m}^{l k} \right)\right)\\
					&=\varepsilon^{-m-1} \int_{B_{\varepsilon}}\int_{B_{\varepsilon}}\left(f^{C_{m}}(z^{-1}p)-f^{C_{m}}(v^{-1}p)\right)\eta_{\varepsilon}(v)X( \eta(\delta_{\varepsilon^{-1}}(z)))d zdv\\
					&-\frac{1}{2} \sum_{i, j} \beta_{m}^{i j}\varepsilon^{-m-1} \int_{B_{\varepsilon}}\int_{B_{\varepsilon}}\left(f_{\varepsilon}^{A_{i}}(v^{-1}p) f^{B_{j}}(z^{-1}p)-f_{\varepsilon}^{B_{j}}(v^{-1}p) f^{A_{i}}(z^{-1}p)\right) \\
					&\cdot\eta_{\varepsilon}(v)X( \eta(\delta_{\varepsilon^{-1}}(z)))d zdv\\
					&+\frac{1}{12} \sum_{l, k} \sum_{i<j} \beta_{m}^{l k}\alpha_{k}^{i j}\int_{B_{\varepsilon}}\int_{B_{\varepsilon}}\int_{B_{\varepsilon}}(f^{A_{l}}(v^{-1}p)\left(f^{A_{i}}(v^{-1}p)f^{A_{j}}(z^{-1}p)- f^{A_{j}}(v^{-1}p)f^{A_{i}}(z^{-1}p)\right)\\ &\cdot\eta_{\varepsilon}(w)\eta_{\varepsilon}(v)X( \eta(\delta_{\varepsilon^{-1}}(z)))d zdvdw\\
					&+\frac{1}{12} \sum_{l, k} \sum_{i<j} \beta_{m}^{l k}\alpha_{k}^{i j}\int_{B_{\varepsilon}}\int_{B_{\varepsilon}}\int_{B_{\varepsilon}}(f^{A_{l}}(z^{-1}p)\left(f^{A_{i}}(v^{-1}p)f^{A_{j}}(z^{-1}p)- f^{A_{j}}(v^{-1}p)f^{A_{i}}(z^{-1}p)\right)\\ &\cdot\eta_{\varepsilon}(v)\eta_{\varepsilon}(w)X( \eta(\delta_{\varepsilon^{-1}}(z)))d wdzdv\\
					&:=\mathbb{A}
				\end{aligned}
			\end{equation*}
			Since 
			\begin{equation*}
				\begin{aligned}
					&\left|f_\varepsilon^{A_{i}} (p)f_\varepsilon^{A_{j}} (p)-(f^{A_{i}}(p)f^{A_{j}}(p))_\varepsilon\right|\\
					&\le \int_{B_{\varepsilon}}\int_{B_{\varepsilon}}\left|f^{A_{i}}(z^{-1}p)\right|\left|f^{A_{j}}(v^{-1}p)-f^{A_{j}}(z^{-1}p)\right|\eta_\varepsilon(z)\eta_\varepsilon(v)dzdv\\
					&\lesssim
					\int_{B_{\varepsilon}}\int_{B_{\varepsilon}}\left|f(z^{-1}p)\right|d_{cc}(f(v^{-1}p),f(z^{-1}p))\eta_\varepsilon(z)\eta_\varepsilon(v)dzdv\\
					&\lesssim \varepsilon^{-2\nu_2+1}  \int_{B_{\varepsilon}}\left|f(z^{-1}p)\right|\int_{\lambda B_{\varepsilon}}\left|D_H f\right|dzdv\\
					&\lesssim \varepsilon \mathcal{M}(f)\cdot \mathcal{M}(|D_H f|)
				\end{aligned}
			\end{equation*}
			where we use the $(1,1)$-Poincaré inequality.
			We have
			\begin{equation*}
				\begin{aligned}
					&|\int_{B_{\varepsilon}}\int_{B_{\varepsilon}}\int_{B_{\varepsilon}}f^{A_{l}}(w^{-1}p)\left(f^{A_{i}}(v^{-1}p)f^{A_{j}}(z^{-1}p)- f^{A_{j}}(v^{-1}p)f^{A_{i}}(z^{-1}p)\right)\\
					&\cdot\eta_{\varepsilon}(v)\eta_{\varepsilon}(w)X( \eta(\delta_{\varepsilon^{-1}}(z)))d zdwdv\\
					&-\int_{B_{\varepsilon}}\int_{B_{\varepsilon}}f^{A_{l}}(v^{-1}p)\left(f^{A_{i}}(v^{-1}p)f^{A_{j}}(z^{-1}p)- f^{A_{j}}(v^{-1}p)f^{A_{i}}(z^{-1}p)\right)\\
					&\cdot\eta_{\varepsilon}(v)X( \eta(\delta_{\varepsilon^{-1}}(z)))d zdv | \\
					&
					\lesssim \varepsilon \mathcal{M}(f)\cdot \mathcal{M}(|D_H f|) \int_{B_{\varepsilon}}\int_{B_{\varepsilon}}\left|\left(f^{A_{i}}(v^{-1}p)f^{A_{j}}(z^{-1}p)- f^{A_{j}}(v^{-1}p)f^{A_{i}}(z^{-1}p)\right)\right|\\
					&\cdot\eta_{\varepsilon}(v)X( \eta(\delta_{\varepsilon^{-1}}(z)))d zdv
				\end{aligned}
			\end{equation*}
			Thus, we have
			\begin{equation*}
				\begin{aligned}
					\mathbb{A}&=\varepsilon^{-\nu_2-1} \int_{B_{\varepsilon}}\int_{B_{\varepsilon}}\varphi_{2}^m (f(z^{-1}p),f(v^{-1}p))\eta_{\varepsilon}(v)X( \eta(\delta_{\varepsilon^{-1}}(z)))dzdv\\
					&+\frac{1}{12}\varepsilon \mathcal{M}(f)\cdot \mathcal{M}(|D_H f|) \sum_{l, k} \sum_{i<j} \beta_{m}^{l k}\alpha_{k}^{i j}\\
					&\cdot\int_{B_{\varepsilon}}\int_{B_{\varepsilon}}\left(f^{A_{i}}(v^{-1}p)f^{A_{j}}(z^{-1}p)- f^{A_{j}}(v^{-1}p)f^{A_{i}}(z^{-1}p)\right)\eta_{\varepsilon}(v)X( \eta(\delta_{\varepsilon^{-1}}(z)))d zdv\\
					&\lesssim \varepsilon^{-\nu_2-1} \int_{B_{\varepsilon}}\int_{B_{\varepsilon}}\varphi_{2}^m (f(z^{-1}p),f(v^{-1}p))\eta_{\varepsilon}(v)X( \eta(\delta_{\varepsilon^{-1}}(z)))dzdv\\
					&+\frac{1}{6}\varepsilon \mathcal{M}(f)\cdot \mathcal{M}(|D_H f|) \sum_{l, k}  \beta_{m}^{l k}\\
					&\cdot\int_{B_{\varepsilon}}\int_{B_{\varepsilon}}(\left|\varphi_1^k(f(z^{-1}p),f(v^{-1}p))\right|+\left|f^{B_k}(z^{-1}p)-f^{B_k}(v^{-1}p)\right| )\eta_{\varepsilon}(v)X( \eta(\delta_{\varepsilon^{-1}}(z)))d zdv\\
					&\lesssim \varepsilon^{-\nu_2-1} \int_{B_{\varepsilon}}\int_{B_{\varepsilon}}\varphi_{2}^m (f(z^{-1}p),f(v^{-1}p))\eta_{\varepsilon}(v)X( \eta(\delta_{\varepsilon^{-1}}(z)))dzdv\\
					&+\frac{1}{6}\varepsilon \mathcal{M}(f)\cdot \mathcal{M}(|D_H f|) \sum_{l, k}  \beta_{m}^{l k}\\
					&\cdot\int_{B_{\varepsilon}}\int_{B_{\varepsilon}}(d_{cc}(f(z^{-1}p),f(v^{-1}p))^2+d_{cc}(f(z^{-1}p),f(v^{-1}p)))\eta_{\varepsilon}(v)X( \eta(\delta_{\varepsilon^{-1}}(z)))d zdv\\
					&\lesssim\varepsilon^{-2\nu_2-1} \int_{B_{\varepsilon}}\int_{B_{\varepsilon}}d_{cc}(f(z^{-1}p),f(v^{-1}p))^3dzdv\\ 
					&+\varepsilon^2 \mathcal{M}(f)\cdot \mathcal{M}(|D_H f|) \mathcal{M}(|D_H f|^2)+ \varepsilon \mathcal{M}(f)\cdot \mathcal{M}(|D_H f|) \mathcal{M}(|D_H f|)\\
					&\lesssim\varepsilon^{2}\mathcal{M}(|D_H f|^3)  
					+\varepsilon^2 \mathcal{M}(f) \mathcal{M}(|D_H f|) \mathcal{M}(|D_H f|^2)+ \varepsilon \mathcal{M}(f) \mathcal{M}(|D_H f|) \mathcal{M}(|D_H f|)
				\end{aligned}
			\end{equation*}
			where we use the $(1,1)$, $(1,2)$ and $(1,3)$-Poincaré inequality.
			Thus,
			\begin{equation*}
				\begin{array}{c}
					\left|X(f_{\varepsilon}^{*} \omega_{2}^m)+X\left(f_{\varepsilon}^{*}\left(-\frac{1}{2} \sum_{i, j} \beta_{m}^{i j}A_{i} dB_{j}\right)\right)+X\left(f_{\varepsilon}^{*}\left(\frac{1}{12} \sum_{l, k} \sum_{i<j}A_{l} \alpha_{k}^{i j}\left(A_{i} dA_{j}-A_{j} dA_{i}\right) \beta_{m}^{l k} \right)\right)\right|\\
					\lesssim\varepsilon^{2}\mathcal{M}(|D_H f|^3)  
					+\varepsilon^2 \mathcal{M}(f) \mathcal{M}(|D_H f|) \mathcal{M}(|D_H f|^2)+ \varepsilon \mathcal{M}(f) \mathcal{M}(|D_H f|) \mathcal{M}(|D_H f|)
				\end{array}
			\end{equation*}
			then we can get
			\begin{equation*}
				\begin{array}{c}
					\left|X(f_{\varepsilon}^{*} \omega_{2}^m)\right|\lesssim \left(\varepsilon^{2}\mathcal{M}(|D_H f|^3)  
					+\varepsilon^2 \mathcal{M}(f) \mathcal{M}(|D_H f|) \mathcal{M}(|D_H f|^2)+ \varepsilon \mathcal{M}(f) \mathcal{M}(|D_H f|) \mathcal{M}(|D_H f|)\right)\\
					+X\left(f_{\varepsilon}^{*}\left(-\frac{1}{2} \sum_{i, j} \beta_{m}^{i j}A_{i} dB_{j}\right)\right)+X\left(f_{\varepsilon}^{*}\left(\frac{1}{12} \sum_{l, k} \sum_{i<j}A_{l} \alpha_{k}^{i j}\left(A_{i} dA_{j}-A_{j} dA_{i}\right) \beta_{m}^{l k} \right)\right)\\
					\lesssim \left(\varepsilon^{2}\mathcal{M}(|D_H f|^3)  
					+\varepsilon^2 \mathcal{M}(f) \mathcal{M}(|D_H f|) \mathcal{M}(|D_H f|^2)+ \varepsilon \mathcal{M}(f) \mathcal{M}(|D_H f|) \mathcal{M}(|D_H f|)\right)\\
					+\left(\frac{5}{12} \sum_{l, k} \sum_{i<j} \beta_{m}^{l k}f_{\varepsilon}^{A(i)}(p)X f_{\varepsilon}^{B(j)}(p) \right)+\left(\frac{1}{12} \sum_{l, k} \sum_{i<j} \beta_{m}^{l k}f_{\varepsilon}^{A(i)}(p)\varepsilon\mathcal{M}(f)\mathcal{M}(|D_Hf|) \right)
				\end{array}
			\end{equation*}
			For $X(f_\varepsilon^{B(j)})(p)$, on a compact subset,
			\begin{equation*}
				\begin{aligned}
					X(f_\varepsilon^{B(j)})(p)&\leq \varepsilon^{-\nu_1-1}\int_{B_{\varepsilon}}\left(f^{B_j}(z^{-1}p)-f^{B_j}(p)\right)X( \eta(\delta_{\varepsilon^{-1}}(z)))d z\\
					& \lesssim \varepsilon^{-1}\int_{B_{\varepsilon}} d_K(f(z^{-1}p),f(p))^2dz\\
					& \lesssim \varepsilon^{-1}\int_{B_{\varepsilon}} d_{cc}(f(z^{-1}p),f(p))^2dz\\
					&\leq \varepsilon\mathcal{M}(|D_Hf|^2)(p)
				\end{aligned}
			\end{equation*}
			where we use the $(1,2)$-Poincaré inequality.
			We know that 
			\begin{equation*}
				\begin{aligned}
					\left|X(f_\varepsilon^{B(j)})(p)\right|&\lesssim  \left(\varepsilon^{2}\mathcal{M}(|D_H f|^3)  
					+\varepsilon^2 \mathcal{M}(f) \mathcal{M}(|D_H f|) \mathcal{M}(|D_H f|^2)+ \varepsilon \mathcal{M}(f) \mathcal{M}(|D_H f|) \mathcal{M}(|D_H f|)\right)\\
					&+\varepsilon \mathcal{M}(f) \mathcal{M}(|D_H f|^2) +\varepsilon \mathcal{M}(f)\mathcal{M}(f) \mathcal{M}(|D_H f|)
				\end{aligned}				
			\end{equation*}
			and 
			\begin{equation*}
				\begin{aligned}
					\|X(f_{\varepsilon}^{*} \omega_{2}^m)\|_{L^\frac{q}{4}(W)}^\frac{q}{4}&\lesssim  \varepsilon^{2}\|\mathcal{M}(|D_H f|^3)\|_{L^\frac{q}{4}(W)}^\frac{q}{4}  
					+\varepsilon^2\| \mathcal{M}(f) \mathcal{M}(|D_H f|) \mathcal{M}(|D_H f|^2)\|_{L^\frac{q}{4}(W)}^\frac{q}{4}\\
					&+ \varepsilon \|\mathcal{M}(f) \mathcal{M}(|D_H f|) \mathcal{M}(|D_H f|)\|_{L^\frac{q}{4}(W)}^\frac{q}{4}+\varepsilon \|\mathcal{M}(f) \mathcal{M}(|D_H f|^2)\|_{L^\frac{q}{4}(W)}^\frac{q}{4} \\
					&+\varepsilon \|\mathcal{M}(f)\mathcal{M}(f) \mathcal{M}(|D_H f|)\|_{L^\frac{q}{4}(W)}^\frac{q}{4}\\
					&\lesssim \varepsilon^{2}\||D_H f|\|_{L^\frac{3q}{4}(W)}^\frac{q}{4}  
					+\varepsilon^2\| \mathcal{M}(f)\|_{L^{q}(W)}^\frac{q}{4}\| \mathcal{M}(|D_H f|)\|_{L^{q}(W)}^\frac{q}{4}  \| \mathcal{M}(|D_H f|^2)\|_{L^\frac{q}{2}(W)}^\frac{q}{4}\\
					&+ \varepsilon \|\mathcal{M}(f)\|_{L^\frac{q}{2}(W)}^\frac{q}{4} \|\mathcal{M}(|D_H f|)\|_{L^{q}(W)}^\frac{q}{2} +\varepsilon \|\mathcal{M}(f)\|_{L^\frac{q}{2}(W)}^\frac{q}{4} \| \mathcal{M}(|D_H f|^2)\|_{L^\frac{q}{2}(W)}^\frac{q}{4} \\
					&+\varepsilon \|\mathcal{M}(f)\|_{L^q(W)}^\frac{q}{2}\| \mathcal{M}(|D_H f|)\|_{L^\frac{q}{2}(W)}^\frac{q}{4}\\
					&\lesssim \varepsilon^{2}\||D_H f|\|_{L^\frac{3q}{4}(W)}^\frac{q}{4}  
					+\varepsilon^2\| f\|_{L^{q}(W)}^\frac{q}{4}\| |D_H f|\|_{L^{q}(W)}^\frac{q}{4}  \| |D_H f|\|_{L^{q}(W)}^\frac{q}{2}\\
					&+ \varepsilon \|f\|_{L^\frac{q}{2}(W)}^\frac{q}{4} \||D_H f|\|_{L^{q}(W)}^\frac{q}{2} +\varepsilon \|f\|_{L^\frac{q}{2}(W)}^\frac{q}{4} \| |D_H f|\|_{L^{q}(W)}^\frac{q}{2} \\
					&+\varepsilon \|f\|_{L^q(W)}^\frac{q}{2}\| |D_H f|\|_{L^\frac{q}{2}(W)}^\frac{q}{4}\\
					& \rightarrow 0,
				\end{aligned}				
			\end{equation*}
			for the second inequality, we take advantage of Hölder inequality.
			
			For step-$n$ Carnot groups, we can similarly prove that $X(f_{\varepsilon}^{*} \omega_{3}^k)\to 0 ,
			\dots ,
			X(f_{\varepsilon}^{*} \omega_{n_2}^m)  \to 0 $ in $L_{loc}^\frac{q}{n+1}$. We leave it to readers. 
		\end{proof}	
	\end{thm}
	
	While Theorem \ref{thm-approx-higher} establishes the convergence of the horizontal part of the differential $D_H f_\varepsilon$, the behavior of vector fields acting on pullbacks of higher-layer forms remains to be characterized. By leveraging the Lie algebra structure, specifically the property that the Lie derivative $d_0$ reduces the weight of a form by exactly one, we can inductively extend our convergence results to arbitrary left-invariant vector fields. The following theorem addresses this stratification.

	\begin{thm}\label{thm-vanish}
		Suppose $f\in W^{1,p}(\Omega;G)$ where $\Omega \subset \ G$ is a domain in step-n Carnot group $G$ for $p>n+1$ , $X_i$ is a left-invariant  vector field belonging to the $i$-th layer. If  $B\left(x_{o}, 2 r\right) \subset \Omega $, then  for all  $0<\varepsilon<r$ and the weight of $\omega_j$ is greater than the weight of $X_i$,
		\begin{equation}\label{vanish}
			\begin{aligned}
				&X_i(f_{\varepsilon}^{*} \omega_{j}) \to 0 \ \text{in} \ L^\frac{p}{wt(\omega_{j})} \\
			\end{aligned}
		\end{equation}
		where  $f_{\varepsilon}=f * \eta_{\varepsilon}=\int_{\mathbb{G_1} } f(y) \eta_{\varepsilon}\left(y^{-1} x\right) d y$  and $\eta_{\varepsilon}$ is a smooth function on Carnot group $G_1$ with a compact supported set $B(e,\varepsilon)$ and
		\begin{equation*}
			\int_{G_1}\eta_{\varepsilon}=1.
		\end{equation*}
	\end{thm}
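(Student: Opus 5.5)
The plan is to argue by induction on the weight of the vector field $X_i$, using Theorem \ref{thm-approx-higher} as the base case $i=1$. Two structural facts drive the induction. First, every left-invariant field $X_i$ in the $i$-th layer is a linear combination of iterated brackets $[X^{a_1},[X^{a_2},\dots,X^{a_i}]\dots]$ of horizontal fields, by the stratification $[\mathfrak g^{[1]},\mathfrak g^{[k]}]=\mathfrak g^{[k+1]}$. Second, for a smooth map $f_\varepsilon$ and a left-invariant form $\omega_j$ we have
\begin{equation*}
d(f_\varepsilon^*\omega_j)(X,Y)=X\bigl(f_\varepsilon^*\omega_j(Y)\bigr)-Y\bigl(f_\varepsilon^*\omega_j(X)\bigr)-f_\varepsilon^*\omega_j([X,Y]),
\end{equation*}
and $f_\varepsilon^*d\omega_j=d f_\varepsilon^*\omega_j$, where $d\omega_j=d_0\omega_j$ is a combination of wedges of left-invariant forms of total weight $wt(\omega_j)$.

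\textbf{Inductive step.} Write $X_{i}=[X^a,X_{i-1}]$ with $X^a$ horizontal and $X_{i-1}$ of weight $i-1$. Solve the structure identity for the bracket term:
\begin{equation*}
f_\varepsilon^*\omega_j(X_i)=X^a\bigl(f_\varepsilon^*\omega_j(X_{i-1})\bigr)-X_{i-1}\bigl(f_\varepsilon^*\omega_j(X^a)\bigr)-\bigl(f_\varepsilon^*d_0\omega_j\bigr)(X^a,X_{i-1}).
\end{equation*}
I interpret the quantity $X_i(f_\varepsilon^*\omega_j)$ as the coefficient $f_\varepsilon^*\omega_j(X_i)$, consistent with the notation of Theorem \ref{thm-approx}.

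Each term on the right is estimated as follows.

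1. The last term is a sum of products $f_\varepsilon^*\theta_1(X^a)\,f_\varepsilon^*\theta_2(X_{i-1})$ with $wt(\theta_1)+wt(\theta_2)=wt(\omega_j)$.
   - If $wt(\theta_2)>i-1$, the induction hypothesis gives $f_\varepsilon^*\theta_2(X_{i-1})\to0$ in $L^{p/wt(\theta_2)}$.
   - The factor $f_\varepsilon^*\theta_1(X^a)$ is bounded in $L^{p/wt(\theta_1)}$ by the maximal-function bounds of Theorem \ref{thm-approx-higher}.
   - Hölder with $\frac{wt(\theta_1)}{p}+\frac{wt(\theta_2)}{p}=\frac{wt(\omega_j)}{p}$ then gives convergence to $0$ in $L^{p/wt(\omega_j)}$.
   - If instead $wt(\theta_2)\le i-1$, then $wt(\theta_1)\ge wt(\omega_j)-i+1\ge 2>1$, so the base case gives $f_\varepsilon^*\theta_1(X^a)\to0$, and we conclude in the same way.

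2. The two derivative terms are handled distributionally. Pair against $\psi\in C_c^\infty$ and integrate by parts (left-invariant fields are divergence-free with respect to Haar measure). This moves $X^a$ and $X_{i-1}$ onto $\psi$, leaving integrals of $f_\varepsilon^*\omega_j(X_{i-1})$ and $f_\varepsilon^*\omega_j(X^a)$. These tend to $0$ by the induction hypothesis and the base case, since $wt(\omega_j)>i>i-1$.

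\textbf{Main obstacle.} Step 2 only yields weak (distributional) convergence, whereas the statement asks for norm convergence in $L^{p/wt(\omega_j)}$. To upgrade, I would bypass the derivatives by a direct estimate, as in Theorem \ref{thm-approx}. The coefficient $f_\varepsilon^*\omega_j(X_i)(p)$ can be written as an $\varepsilon^{-\nu_1-i}$-weighted integral of the polynomial $\varphi_j(f(z^{-1}p),f(w^{-1}p))$ against $X_i\eta(\delta_{\varepsilon^{-1}}z)$, plus lower-order commutator terms treated as in Theorem \ref{thm-approx-higher}.

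The quasi-metric \eqref{metric-K} gives $|\varphi_j|\lesssim d_{cc}^{wt(\omega_j)}$. Combined with the $(1,wt(\omega_j))$-Poincaré inequality, this yields a pointwise bound
\begin{equation*}
|f_\varepsilon^*\omega_j(X_i)|\lesssim \varepsilon^{wt(\omega_j)-i}\,\mathcal M(|D_Hf|^{wt(\omega_j)})+(\text{products of maximal functions with positive powers of }\varepsilon).
\end{equation*}
The exponent $wt(\omega_j)-i$ is positive precisely because $wt(\omega_j)>wt(X_i)$.

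The $L^{p/wt(\omega_j)}$ boundedness of $\mathcal M$ requires $p/wt(\omega_j)>1$, which holds since $wt(\omega_j)\le n<n+1<p$. This estimate is the step I expect to need the most care: the lower-order terms mixing $\mathcal M(f)$ must be tracked as in Theorem \ref{thm-approx-higher}, and the factors combined with Hölder so that the total integrability exponent matches $p/wt(\omega_j)$.
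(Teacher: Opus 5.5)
Your induction has the same skeleton as the paper's proof: write $X_i=[X^a,X_{i-1}]$, pull back $d\omega_j$, and close with H\"older and Theorem~\ref{thm-approx-higher}. You also correctly identify what the paper glosses over. The paper writes $f_\varepsilon^*\omega_j([X_1,X_1'])=(f_\varepsilon^*d_0\omega_j)(X_1,X_1')$, i.e.\ it treats $f_\varepsilon^*\omega_j$ as if it were left-invariant. That silently discards your two terms $X^a\bigl(f_\varepsilon^*\omega_j(X_{i-1})\bigr)-X_{i-1}\bigl(f_\varepsilon^*\omega_j(X^a)\bigr)$.

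Your proposal does not close this gap either. Step~2 yields only distributional convergence, and the direct estimate you fall back on breaks exactly at the ``lower-order commutator terms'' you postpone. Since $wt(\omega_j)>i\ge 2$, the target has step at least $3$. Its left translations are then not affine in exponential coordinates, so Euclidean mollification does not commute with them. Consequently $f_\varepsilon^*\omega_j(X_i)(p)$, which is linear in $X_if_\varepsilon(p)$ with coefficients polynomial in $f_\varepsilon(p)$, differs from your $\varphi_j$-integral by terms at least quadratic in the increments $f(y)-f_\varepsilon(p)$, with coefficients depending on the base point. In the paper's step-$3$ coordinates this is the part of $\tfrac{1}{12}(A_l+a_l)\alpha_k^{ij}(A_ia_j-a_iA_j)\beta_m^{lk}$ that is quadratic in $a-A$. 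Poincar\'e gives such terms at best a factor $\varepsilon^2$, while $X_i\eta_\varepsilon$ costs $\varepsilon^{-\nu_1-i}$. So they are only controlled by $\varepsilon^{2-i}\mathcal{M}(f)\,\mathcal{M}(|D_Hf|^2)$, which is exactly why the corresponding terms in the proof of Theorem~\ref{thm-approx-higher} carry a single factor $\varepsilon$. For $i=2$ this bound does not decay, and for $i\ge 3$ it is not even bounded. The positivity of $wt(\omega_j)-i$ only helps your principal $\varphi_j$ term.

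Two further steps also fail as written. First, in Step~1 with $i\ge 3$, the subcase $wt(\theta_2)<i-1$ pairs a low-weight form with $X_{i-1}$. This gives a sub-diagonal entry of $Df_\varepsilon$ such as $X_{i-1}f_\varepsilon^{A}$, for which you only have the bound $\varepsilon^{2-i}\mathcal{M}(|D_Hf|)$, not uniform in $\varepsilon$. So ``bounded times null'' is unavailable; you would need $f_\varepsilon^*\theta_1(X^a)=o(\varepsilon^{i-2})$, which the base case does not give because of the same commutator terms. Second, Theorem~\ref{thm-approx-higher} only asserts convergence in $L^{q/(n+1)}$, since its bounds are products of up to $n+1$ maximal functions including $\mathcal{M}(f)$. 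It does not provide the $L^{p/wt(\theta)}$ control your H\"older bookkeeping uses, so even the case $i=1$ of the statement is not supplied by what you cite. What is missing is a genuine mechanism producing decay, not mere boundedness, for the nonlinear terms created by mollifying into a target whose group law is not affine. Without it the inductive step does not close.
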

	\begin{proof}
		For a vector field $X_2$ belongs to the second layer and $X_1,X_1'$ belong to the horizontal, for $j\geq 2$,
		\begin{equation*}
			\begin{aligned}
				X_2(f_{\varepsilon}^{*} \omega_{j})&=(f_{\varepsilon}^{*} \omega_{j})[X_1,X_1']\\
				&=(d_0f_{\varepsilon}^{*} \omega_{j})(X_1,X_1')\\
				&=(f_{\varepsilon}^{*} d_0\omega_{j})(X_1,X_1')\\
			\end{aligned}
		\end{equation*}
		By the definition of $d_0$, the weight of $d_0\omega_{j}$ is the same as $\omega_{j}$, so it can be represented as the of a form on the horizontal layer and a contact form on the $j$-th layer under wedge product
		\begin{equation*}
			d_0\omega_{j}=\sum_{i}a_idx_i\wedge\omega_{j-1}^i.
		\end{equation*} 
		where $dx_i$ are in the horizontal layer and $\omega_{j-1}^i$ are in the $(j-1)$-th layer.
		Thus, 
		\begin{equation*}
			\begin{aligned}
				X_2(f_{\varepsilon}^{*} \omega_{j})&=(f_{\varepsilon}^{*} d_0\omega_{j})(X_1,X_1')\\
				&=\sum_{i}X_1f_{\varepsilon}^{*}(a_idx_i)X_1'f_{\varepsilon}^{*}(\omega_{j-1}^i)
			\end{aligned}
		\end{equation*}
		By calculation,
		\begin{equation*}
			\begin{aligned}
				\| X_2(f_{\varepsilon}^{*} \omega_{j})\|_{L^\frac{p}{wt(\omega_{j})}}&=\| \sum_{i}X_1f_{\varepsilon}^{*}(a_idx_i)X_1'f_{\varepsilon}^{*}(\omega_{j-1}^i)\|_{L^\frac{p}{wt(\omega_{j})}}\\
				&\lesssim \sum_{i} \| X_1f_{\varepsilon}^{*}(dx_i)X_1'f_{\varepsilon}^{*}(\omega_{j-1}^i)\|_{L^\frac{p}{wt(\omega_{j})}}\\
				&\lesssim \sum_{i} \| X_1f_{\varepsilon}^{*}(dx_i)\|_{L^p}^{wt(\omega_j)} \|X_1'f_{\varepsilon}^{*}(\omega_{j-1}^i)\|_{L^\frac{p}{wt(\omega_{j})-1}}^{\frac{wt(\omega_{j})}{wt(\omega_{j})-1}}\\
				&\lesssim \sum_{i} \| X_1f_{\varepsilon}^{*}(dx_i)\|_{L^p}^{wt(\omega_j)} \|X_1'f_{\varepsilon}^{*}(\omega_{j-1}^i)\|_{L^\frac{p}{wt(\omega_{j-1})}}^{\frac{wt(\omega_{j})}{wt(\omega_{j-1})}}\\
				& \to 0
			\end{aligned}
		\end{equation*}
		where the second inequality is due to Hölder inequality and the third inequality is due to $wt(\omega_{j-1})=wt(\omega_j)-1$ and the last one is because theorem \ref{thm-approx-higher}.
		
		For a vector field $X_3$ belongs to the third layer, for $j\geq 3$,
		\begin{equation*}
			\begin{aligned}
				X_3(f_{\varepsilon}^{*} \omega_{j})&=(f_{\varepsilon}^{*} \omega_{j})[X_1,X_2]\\
				&=(d_0f_{\varepsilon}^{*} \omega_{j})(X_1,X_2)\\
				&=(f_{\varepsilon}^{*} d_0\omega_{j})(X_1,X_2)\\
			\end{aligned}
		\end{equation*}
		Because of
		\begin{equation*}
			d_0\omega_{j}=\sum_{i}a_idx_i\wedge\omega_{j-1}^i.
		\end{equation*} 
		Thus, 
		\begin{equation*}
			\begin{aligned}
				\| X_3(f_{\varepsilon}^{*} \omega_{j})\|_{L^\frac{p}{wt(\omega_{j})}}&=\| \sum_{i}X_1f_{\varepsilon}^{*}(a_idx_i)X_22f_{\varepsilon}^{*}(\omega_{j-1}^i)\|_{L^\frac{p}{wt(\omega_{j})}}\\
				&\lesssim \sum_{i} \| X_1f_{\varepsilon}^{*}(dx_i)X_2f_{\varepsilon}^{*}(\omega_{j-1}^i)\|_{L^\frac{p}{wt(\omega_{j})}}\\
				&\lesssim \sum_{i} \| X_1f_{\varepsilon}^{*}(dx_i)\|_{L^p}^{wt(\omega_j)} \|X_2f_{\varepsilon}^{*}(\omega_{j-1}^i)\|_{L^\frac{p}{wt(\omega_{j})-1}}^{\frac{wt(\omega_{j})}{wt(\omega_{j})-1}}\\
				&\lesssim \sum_{i} \| X_1f_{\varepsilon}^{*}(dx_i)\|_{L^p}^{wt(\omega_j)} \|X_2f_{\varepsilon}^{*}(\omega_{j-1}^i)\|_{L^\frac{p}{wt(\omega_{j-1})}}^{\frac{wt(\omega_{j})}{wt(\omega_{j-1})}}\\
				& \to 0
			\end{aligned}
		\end{equation*}
		By induction, we have $X_i(f_{\varepsilon}^{*} \omega_{j}) \to 0 \ \text{in} \ L^\frac{p}{wt(\omega_{j})} $.
	\end{proof}
	We call a curve $\gamma$ in the $i$-th layer, if $\gamma$ is a.e. differentiable and its differential is in the $i$-th layer.
	
	\begin{cor}
		Suppose a curve $\gamma$ is in the $i$-th layer and f is the mapping mentioned in the theorem \ref{thm-vanish}. Then its image $f(\gamma)$ is not in the $j$-th layer a.e. for $j>i$.
	\end{cor}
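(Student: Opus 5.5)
We read the statement as follows. If $\gamma$ lies in the $i$-th layer, its image cannot have, on a set of positive measure along $\gamma$, velocity concentrated in a higher layer $j>i$. More precisely, the component of the velocity of $f\circ\gamma$ along the $j$-th layer, measured by the contact forms $\omega_j$, vanishes a.e. Hence $f(\gamma)$ cannot be a curve in the $j$-th layer unless it is a.e. stationary.

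The plan is to derive this from Theorem \ref{thm-vanish} by mollification and a Fubini-type argument along curves. Write $\dot\gamma(t)=X_i(\gamma(t))$ for a.e.\ $t$. Here $X_i$ is a left-invariant field in the $i$-th layer after left-trivialization; for a general curve in the $i$-th layer we decompose the trivialized derivative in a basis of $\mathfrak g^{[i]}$ and argue coordinatewise.

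\textbf{Step 1: smooth approximants.} For the smooth maps $f_\varepsilon$, the pullback satisfies
\[
(f_\varepsilon\circ\gamma)^*\omega_j(\partial_t)=(f_\varepsilon^*\omega_j)(\dot\gamma)=X_i(f_\varepsilon^*\omega_j)(\gamma(t)),
\]
understood as the coefficient function of $f_\varepsilon^*\omega_j$ evaluated on $X_i$, as in the proof of Theorem \ref{thm-vanish}. Since $wt(\omega_j)=j>i=wt(X_i)$, Theorem \ref{thm-vanish} gives $X_i(f_\varepsilon^*\omega_j)\to 0$ in $L^{p/j}_{loc}$.

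\textbf{Step 2: transfer to curves.} Convergence in $L^{p/j}$ on $\Omega$ does not by itself give convergence along a single curve. We therefore foliate a neighbourhood by left translates $g\gamma$. The fields $X_i$ are left-invariant, and Haar measure disintegrates along such a flow-box family of translates. Fubini then shows that for a.e.\ translate $g\gamma$ we have $X_i(f_{\varepsilon_k}^*\omega_j)\to0$ in $L^{p/j}(dt)$ along a subsequence. The same argument applied to Theorem \ref{thm-approx-higher} gives $f_{\varepsilon_k}\circ g\gamma\to f\circ g\gamma$ with horizontal derivatives converging. The statement is thus understood for a.e.\ curve in a family, which is the natural meaning of ``a.e.'' for Sobolev maps; this is the ACL property.

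\textbf{Step 3: pass to the limit.} Passing to the limit, the $\omega_j$-component of the derivative of $f\circ g\gamma$ vanishes for a.e.\ $t$. If $f(g\gamma)$ were a curve in the $j$-th layer, its trivialized derivative would be nonzero only in the $j$-th layer and be detected by the $\omega_j$. This would force the derivative to be zero a.e., so the image is not in the $j$-th layer.

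\textbf{Main obstacle.} The hardest point is identifying the limit of $X_i(f_\varepsilon^*\omega_j)$ with the derivative of $f\circ\gamma$ in the $j$-th layer. For $i>1$, $f$ is not a priori differentiable along non-horizontal directions, so this identification is not automatic. We would handle it distributionally on each curve. Writing $\omega_j=dC-\theta$ with $\theta$ polynomial in the lower coordinates, the term $X_i f_\varepsilon^{C}$ converges in distributions to $X_i f^{C}$. The term $\theta$ is controlled by products of lower-layer quantities, which converge strongly by Theorems \ref{thm-approx-higher} and \ref{thm-vanish} together with Hölder's inequality, given $p>n+1$. The limit identity then holds in $\mathcal D'$ along a.e.\ translate, which suffices.
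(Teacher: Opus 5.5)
Your reformulation (``for a.e.\ left translate $g\gamma$'' instead of one fixed, Lebesgue-null curve) and your Fubini step are sound. They also repair something the paper's proof passes over. The paper goes directly from $L^{p/wt(\omega_j)}$-convergence on $\Omega$ to $\int_0^1|f_{\varepsilon*}(\gamma'(t))|\,dt\to 0$ along the given curve. It also bounds the whole displacement $d(f_\varepsilon(p),f_\varepsilon(q))$ using only the components of $f_{\varepsilon*}\gamma'$ in layers $j>i$. For $i=1$ your conclusion is fine, and it needs no mollification at all. Since $f$ is absolutely continuous on $\mathrm{Mod}_p$-a.e.\ horizontal curve, it is so on a.e.\ translate $g\gamma$. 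Then $f\circ g\gamma$ is absolutely continuous into $(G_2,d_{cc})$, hence horizontal, and a velocity lying in both the first and the $j$-th layer is zero. The genuine gap is Step 3 for $i\ge 2$, which is precisely your ``main obstacle'', and the proposed distributional fix does not close it.

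Three things fail.

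First, a curve in the $i$-th layer with $i\ge 2$ is not horizontal, so horizontal Sobolev regularity gives no ACL property along it. Thus $f\circ g\gamma$ need not be absolutely continuous, or even BV. The limit of $\frac{d}{dt}(f_{\varepsilon_k}\circ g\gamma)$ could at best be identified with a distributional derivative. That says nothing about the pointwise a.e.\ derivative to which ``curve in the $j$-th layer'' refers.

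Second, the step ``$X_if^C_\varepsilon\to X_if^C$ in $\mathcal D'$, hence along a.e.\ translate'' is not meaningful. For $i\ge 2$, $X_if^C$ is a derivative of order $i$ in horizontal directions, a genuine distribution on $\Omega$. Distributions have no traces on the leaves of a foliation, and your Fubini argument needs $L^1_{loc}$-convergence of functions.

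Third, the $\theta$-part of $(f_\varepsilon^*\omega_j)(X_i)$ is a sum of polynomials in $f_\varepsilon$ times $X_i f_\varepsilon^{(k)}$, where $f_\varepsilon^{(k)}$ ranges over the coordinates of \emph{all} layers $k<j$, including $k\le i$. For example, for a horizontal coordinate $A$, $X_if^{A}_\varepsilon$ is a priori only bounded by $\varepsilon^{1-i}\mathcal{M}(|D_Hf|)$. Theorem \ref{thm-approx-higher} concerns horizontal $X$ only, and Theorem \ref{thm-vanish} concerns only components of weight $>i$. So nothing you cite gives strong convergence, or even uniform $L^1$ bounds, for these factors, and the products cannot be passed to the limit.

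Closing the case $i\ge 2$ would need some regularity of $f$ along $V_i$-directions that horizontal Sobolev regularity does not provide. The paper's endpoint argument does not supply it either: it simply drops the components of $f_{\varepsilon*}\gamma'$ in layers $\le i$, which are exactly these uncontrolled terms.
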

	\begin{proof}
		By contradiction, assume $f(\gamma)$ is in the $j$-th layer for a curve $\gamma$, then there are two points $p,q$ such that $p=\gamma(0)\neq\gamma(1) = q$. We take a family of smooth approximation $f_\varepsilon$ of $f$ as in theorem \ref{thm-vanish}.
		
		The tangent vector of $\gamma$ is
		\begin{equation*}
			\gamma'(t)=\sum_{k=1}^{dim(\mathfrak{g}^{[i]})}a_k(t)X_k,
		\end{equation*}
		where the $X_k$ are basis of $\mathfrak{g}^{[i]}$. 
		\begin{equation*}
			\begin{aligned}
				f_{\varepsilon*}(\gamma'(t))&=\sum_{k=1}^{dim(\mathfrak{g}^{[i]})}a_k(t)f_{\varepsilon*}(X_k)\\
				&=\sum_{k=1}^{dim(\mathfrak{g}^{[i]})}a_k(t)\sum_{j> i}X_kf_{\varepsilon}^*(\omega_j')X_j'\\
				&\to 0 \ \text{in}\ L^\frac{p}{wt(\omega_{j})},
			\end{aligned}
		\end{equation*}
		where the $X_j'$ and $\omega_j'$ are the vectors and dual vectors in the $j$-th layer and by theorem \ref{thm-vanish}. 
		
		Now, under Euclidean metric $d$,
		\begin{equation*}
			\begin{aligned}
				d(f(p),f(q))&\leq d(f(p),f_\varepsilon(p))+d(f_\varepsilon(p),f_\varepsilon(q))+d(f_\varepsilon(q),f(q))\\
				&\leq d(f(p),f_\varepsilon(p))+d(f_\varepsilon(q),f(q))+\int_{0}^{1}\left|f_{\varepsilon*}(\gamma'(t))\right|dt\\
				&\rightarrow 0 \ \text{almost everywhere},
			\end{aligned}
		\end{equation*}
		Thus $f(p)=f(q)$ a.e., a contradiction.
	\end{proof}
	Consequently, we know that the classical differentials of the mappings in theorem \ref{thm-vanish} have the following form almost everywhere:
	\begin{equation}\label{diff}
		Df=\begin{bmatrix}
			D_{V_1}f&* &\dots &*\\
			& D_{V_2}f&\dots &*\\
			& &\ddots &*\\
			& & &D_{V_n}f
		\end{bmatrix}
	\end{equation}  
	where $D_{V_1}f=D_Hf$.
	Compared with the Pansu differential of a Pansu differentiable mapping, the Pansu differential has the form:
	\begin{equation}\label{Pansu-diff}
		D_Pf=\begin{bmatrix}
			D_{V_1}f& & &\\
			& D_{V_2}f& &\\
			& &\ddots &\\
			& & &D_{V_n}f
		\end{bmatrix}
	\end{equation}
	\begin{thm}\label{Pullback}
		Suppose that  $f \in W^{1,q}\left(\Omega ; G_2\right)$ and $f$ is Pansu differentiable a.e., where $ G_2$ is a step-$n$ Carnot group and $\nu_2$ is the topological dimension of $G_2$, $\Omega \subset G_1$  is open and homogenuous dimension of $G_1$ is $\nu_1$, topological dimension of $G_1$ is $N$, $X$ is a left-invariant  vector field belonging to the horizontal layer and $q>n+1$. If  $B\left(x_{o}, 2 r\right) \subset \Omega $, then  for all  $0<\varepsilon<r$,
		\begin{equation}
			f_\varepsilon^*(\omega)\wedge \eta \to f_P^*(\omega)\wedge \eta \  \text{in}\  L_{loc}^\frac{q}{n+1}(\Omega;\mathbb{R}^{\nu_2}).\\
		\end{equation}
		where $\omega \in \Omega^k(U')$, $\eta \in \Omega_c^{N-k}(U)$ and $wt(\omega)+wt(d\eta)\leq \nu_1$.
	\end{thm}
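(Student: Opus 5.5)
The plan is to expand $f_\varepsilon^*\omega$ in the left-invariant coframe of $G_1$ and compare it, block by block, with $f_P^*\omega$, using the block structure \eqref{diff} versus \eqref{Pansu-diff}. Write $\omega=\sum_I a_I\,\theta_I$ with $\theta_I$ wedge products of left-invariant coframe elements of $G_2$ (horizontal $dx_i$ and contact forms $\omega^m_j$), and $a_I$ smooth. Then $f_\varepsilon^*\omega=\sum_I (a_I\circ f_\varepsilon)\, f_\varepsilon^*\theta_I$. Each $f_\varepsilon^*\theta_I$ is a determinant-type polynomial in the entries $X(f_\varepsilon^*\omega_j)$, where $X$ runs over the left-invariant frame of $G_1$. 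The same expansion with $D_Pf$ gives $f_P^*\omega$, whose entries are the diagonal blocks $D_{V_i}f$ only.

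The first step is to split the entries of the matrix of $f_\varepsilon$ into three types, using Theorem \ref{thm-approx-higher} and Theorem \ref{thm-vanish}.
\begin{enumerate}
\item The horizontal entries $D_Hf_\varepsilon$ converge to $D_Hf=D_{V_1}f$ in $L^{q}_{loc}$.
\item The strictly upper-triangular entries $X_i(f_\varepsilon^*\omega_j)$ with $wt(\omega_j)>wt(X_i)$ tend to $0$ in $L^{q/wt(\omega_j)}_{loc}$.
\item The diagonal entries $X_i(f_\varepsilon^*\omega_i)$ for $i\ge 2$ are handled by the $d_0$ identity from the proof of Theorem \ref{thm-vanish}: $X_i=[X_1,X_{i-1}]$ and $d_0\omega_i=\sum a\,dx\wedge\omega_{i-1}$. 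This writes them as sums of products of lower-layer entries. Induction on the layer then gives convergence to the corresponding products of entries of $D_Pf$, which by the homomorphism property of the Pansu differential equal $D_{V_i}f$.
\end{enumerate}
Hölder's inequality, with exponents weighted by $wt$, converts each product of entries of total weight $w$ into convergence in $L^{q/w}$. Since $f\in W^{1,q}$ and $q>n+1$, all factors $a_I\circ f_\varepsilon$ converge in measure and are locally bounded after composing with a cutoff. So every term of $f_\varepsilon^*\omega$ converges, except terms that involve an off-diagonal entry. Those tend to zero, and they are exactly the terms absent from $f_P^*\omega$.

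The second step brings in $\eta$ and the weight condition. A term of $f_\varepsilon^*\omega\wedge\eta$ survives only if the $G_1$-coframe degrees of $f_\varepsilon^*\theta_I$ and of $\eta$ are complementary, so that together they fill the volume form. I would then use $wt(\omega)+wt(d\eta)\le\nu_1$ as in the Kleiner--Müller--Xie pullback theorem. The plan is to integrate by parts: replace $\int f_\varepsilon^*\omega\wedge\eta$ by a primitive-form expression, test against $d\eta$, and use the weight count to show that only components of $f_\varepsilon^*\omega$ of weight at most $\nu_1-wt(d\eta)$ pair nontrivially. The vanishing off-diagonal terms have strictly larger weight than their Pansu counterparts. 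Hence they either pair to zero against $\eta$, or are controlled by Theorem \ref{thm-vanish}.

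The last step bounds the integrability. The products have total weight at most $n+1$ in the relevant components, so the convergence holds in $L^{q/(n+1)}_{loc}$ on compact $W\Subset B(x_o,r)$, provided $\varepsilon<r$. The main obstacle will be the diagonal higher-layer entries $X_i(f_\varepsilon^*\omega_i)$, $i\ge2$: Theorem \ref{thm-approx-higher} only gives horizontal convergence. Identifying the limit with the Pansu blocks needs the a.e. Pansu differentiability together with the $d_0$ recursion, which has to be carried out carefully, including the exponent bookkeeping in Hölder's inequality so that the final exponent does not drop below $q/(n+1)$.
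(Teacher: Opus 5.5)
Your outline follows the same idea as the paper's sketch: compare the block form \eqref{diff} of $Df_\varepsilon$ with the block-diagonal form \eqref{Pansu-diff}, using Theorem \ref{thm-vanish} and the weight condition. But it has a genuine gap exactly where the weight condition has to do its work. Your three-way classification of the entries $(f_\varepsilon^*\omega_j)(X_i)$ leaves out those with $wt(X_i)>wt(\omega_j)$. These are the $*$ blocks in \eqref{diff}, for example $(f_\varepsilon^*dx_1)(Z)=Zf_\varepsilon^{x_1}$ with $Z$ in the second layer of $G_1$. The hypothesis $f\in W^{1,q}$ gives no control of them. From $\int Z\eta_\varepsilon=0$ and the $(1,1)$-Poincar\'e inequality one only gets $|Zf_\varepsilon^{x_1}|\lesssim\varepsilon^{-1}\mathcal{M}(|D_Hf|)$, and for general $W^{1,q}$ maps these entries need not stay bounded even in $L^1_{loc}$. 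So your claim that every term converges except those containing an off-diagonal entry, which tend to zero, is false. Every component of $f_\varepsilon^*\omega$ of (positive) weight larger than $wt(\omega)$ necessarily contains such an entry. The weight hypothesis exists precisely to stop those components from pairing with $\eta$. In your Step 2 it is spent instead on entries that already vanish, and your reading (components of weight at most $\nu_1-wt(d\eta)$ pair) does not exclude weights above $wt(\omega)$.

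Even with the weight count done correctly, an entry-by-entry H\"older argument cannot close. A component of weight exactly $wt(\omega)$ can pair one blowing-up entry with one vanishing entry. Take $\omega=dx_1\wedge\omega_1^k$ on a step-2 target. The $\theta_X\wedge\theta_Z$ component of $f_\varepsilon^*\omega$ (with $\theta_X,\theta_Z$ dual to $X,Z$) contains $(f_\varepsilon^*dx_1)(Z)\,(f_\varepsilon^*\omega_1^k)(X)$. This term is absent from $f_P^*\omega$, yet your bounds give only $\varepsilon^{-1}\mathcal{M}(|D_Hf|)\cdot\varepsilon\,\mathcal{M}(|D_Hf|^2)=O(1)$.

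The same defect sits in item 3 of your first step. With $\theta:=f_\varepsilon^*\omega_i$, Cartan's formula gives
\[
\theta([X_1,X_{i-1}])=X_1\bigl(\theta(X_{i-1})\bigr)-X_{i-1}\bigl(\theta(X_1)\bigr)-(f_\varepsilon^*d\omega_i)(X_1,X_{i-1}).
\]
The first two terms are $O(1)$ derivatives of vanishing entries, and your recursion silently drops them. For $i\ge 3$, the last term contains the uncontrolled entry $(f_\varepsilon^*dx)(X_{i-1})$. So $d_0$ does not pass through $f_\varepsilon^*$ the way you use it.

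The missing idea is to treat each weight component as a whole, by blowing up at points $x$ of Pansu differentiability.
\begin{itemize}
\item At such points the rescalings $\delta_{1/\varepsilon}\circ L_{f(x)^{-1}}\circ f\circ L_x\circ\delta_\varepsilon$ converge to $D_Pf(x)$.
\item By dilation homogeneity, the weight-$w'$ component of $f_\varepsilon^*\omega$ at $x$ equals, up to lower-order corrections, $\varepsilon^{wt(\omega)-w'}$ times the corresponding component for the mollified rescaled map.
\item This gives a.e.\ convergence to $0$ when $w'<wt(\omega)$, and to $f_P^*\omega$ when $w'=wt(\omega)$.
\item Dominated convergence, using maximal-function bounds, then concludes.
\end{itemize}

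Two smaller points. First, integrating by parts against $d\eta$ yields at most convergence of integrals, not the asserted strong $L^{q/(n+1)}_{loc}$ convergence of $f_\varepsilon^*\omega\wedge\eta$. Second, your products of total weight $wt(\omega)$ land in $L^{q/wt(\omega)}$, which is not $L^{q/(n+1)}$ once $wt(\omega)>n+1$ (for instance, a volume form on $G_2$). So the claim that the relevant weights are at most $n+1$ is unjustified.

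Note that the paper's own proof does not supply the missing step either. After the bound $|f_P^*\omega|\le|D_Hf|^{wt(\omega)}|\omega|$, it only appeals to the ``similarity'' of \eqref{diff} and \eqref{Pansu-diff}, so following it will not close the gap.
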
	
	\begin{proof}
		Assume $[X_1,X_2]=Z$, where $X_1,X_2$ are horizontal vector fields.
		\begin{equation*}
			\begin{aligned}
				\left|f_{P*}(Z)\right|&=\left|f_{P*}(X_1X_2-X_2X_1)\right|\\
				&=\left|f_{P*}(X_1)f_{P*}(X_2)-f_{P*}(X_2)(X_1)\right|\\
				&\lesssim \left|D_H f\right|^2.
			\end{aligned}
		\end{equation*}
		Similarly, we have 
		\begin{equation*}
			\left|f_{P*}(Y^{[n]})\right|\lesssim \left|D_H f\right|^n 
		\end{equation*}
		for left invariant vector field $Y^{[n]}$ in the $n$-th layer.
		
		Therefore,
		\begin{equation*}
			\left|f_P^*(\omega)\right|\leq \left|D_H f\right|^{wt(\omega)}|\omega| 
		\end{equation*}
		and 
		\begin{equation*}
			\int_U \left|f_P^*(\omega)\wedge d\eta\right| \leq \int_U \left|D_H f\right|^{wt(\omega)}|\omega| |d\eta|<\infty.
		\end{equation*}
		
		With \eqref{vanish},\eqref{diff} and $wt(\omega)+wt(d\eta)\leq \nu_1$, we know that $f_P^*(\omega)\wedge \eta$ is a volume form or equals to $0$ on Carnot group $G_1$. Therefore, the similarity between $Df$ and $D_Pf$ shows that
		\begin{equation}
			f_\varepsilon^*(\omega)\wedge \eta \to f_P^*(\omega)\wedge \eta \  \text{in}\  L_{loc}^\frac{q}{n+1}(\Omega;\mathbb{R}^{\nu_2}).\\
		\end{equation}  
	\end{proof}
	Therefore, we can generalize the classical pullback theorem.
	\begin{thm}\label{thm-Pull-0}
		Let $G_1$ and $G_2$ be Carnot groups, where $G_1$ has topological dimension $N$ and homogenuous dimension $\nu_1$, $G_2$ has step $n$. Suppose $f \in W^{1,p}(U;G_2)$ and $f:U\to U' \subset G_2$ where $U$ is a open domain in $G_1$ and $\omega \in \Omega^k(U')$ is a continuous closed form, $\eta \in \Omega_c^{N-k-1}(U)$ is smooth. When $p>max\left\{ wt(\omega),n+1\right\}$ and $wt(\omega)+wt(d\eta)\leq \nu_1$, then 
		\begin{equation}
			\int_U f_P^*\omega \wedge d\eta=0
		\end{equation}  
	\end{thm}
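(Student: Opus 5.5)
Let $f_\varepsilon=f*\eta_\varepsilon$ be the group mollifications of Theorem \ref{thm-approx-higher}. The approach is to prove the identity for these smooth maps, where it is just Stokes' theorem, and then pass to the limit with Theorem \ref{Pullback}. Since $f_\varepsilon$ is smooth and $\omega$ is closed, $d(f_\varepsilon^*\omega)=f_\varepsilon^*d\omega=0$. We have $\eta\in\Omega_c^{N-k-1}(U)$, so $f_\varepsilon^*\omega\wedge\eta$ is a compactly supported $(N-1)$-form and
\begin{equation*}
0=\int_U d\left(f_\varepsilon^*\omega\wedge\eta\right)=\int_U f_\varepsilon^*(d\omega)\wedge\eta+(-1)^k\int_U f_\varepsilon^*\omega\wedge d\eta=(-1)^k\int_U f_\varepsilon^*\omega\wedge d\eta .
\end{equation*}
If $\omega$ is only continuous, I would first smooth $\omega$ on $U'$ (Euclidean mollification of the coefficients keeps closedness and weight) and remove this extra approximation at the end by dominated convergence. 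Thus $\int_U f_\varepsilon^*\omega\wedge d\eta=0$ for all small $\varepsilon$.

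Next I would apply Theorem \ref{Pullback} with $d\eta\in\Omega_c^{N-k}(U)$ in place of its compactly supported test form. Its weight hypothesis is exactly the assumption $wt(\omega)+wt(d\eta)\le\nu_1$, and $p>n+1$ holds. This gives $f_\varepsilon^*\omega\wedge d\eta\to f_P^*\omega\wedge d\eta$ in $L^{p/(n+1)}_{loc}$. Because $\operatorname{supp} d\eta$ is compact in $U$, convergence in $L^{p/(n+1)}$ there implies $L^1$ convergence on the support. Taking limits then yields $\int_U f_P^*\omega\wedge d\eta=0$.

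Integrability of the limit is guaranteed as follows. The estimate $|f_P^*\omega|\lesssim|D_Hf|^{wt(\omega)}|\omega\circ f|$ from the proof of Theorem \ref{Pullback} is integrable on compact sets because $p>wt(\omega)$. This is where that hypothesis enters.

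The main obstacle is the application of Theorem \ref{Pullback}, and there are two concerns. First, that theorem assumes Pansu differentiability a.e., so I must check that this holds or is implied in the present setting; otherwise I would define $f_P^*$ through the block-diagonal part \eqref{Pansu-diff} of $Df$. Second, the off-diagonal entries in \eqref{diff} must contribute nothing. Here I would expand $f_\varepsilon^*\omega\wedge d\eta$ in a left-invariant basis. Any term in which $f_\varepsilon^*\omega$ uses a component $X_i(f_\varepsilon^*\omega_j)$ with $wt(\omega_j)>wt(X_i)$ is controlled by Theorem \ref{thm-vanish} in $L^{p/wt(\omega_j)}$, multiplied by bounded factors in $L^p$. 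For the product to be a nonzero top-degree form, the total weight must be exactly $\nu_1$, and the inequality $wt(\omega)+wt(d\eta)\le\nu_1$ forces only the weight-preserving (diagonal) components to survive. The delicate point is the Hölder bookkeeping ensuring these products stay in $L^1$: the exponents sum to at most $wt(\omega)/p<1$, which again uses $p>\max\{wt(\omega),n+1\}$.
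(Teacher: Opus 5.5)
Your argument is the paper's argument. Stokes' theorem for the smooth maps $f_\varepsilon$ gives $\int_U f_\varepsilon^*\omega\wedge d\eta=0$, and Theorem \ref{Pullback}, with $d\eta$ as test form, is used to pass to the limit. The Stokes step is fine. The gap is the limit step, which you rightly single out, and your proposed justification of it is false.

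A top-degree product $\theta_I\wedge\theta_J$ of left-invariant forms on $G_1$ is nonzero only if $wt(I)+wt(J)=\nu_1$. So an \emph{upper} bound $wt(d\eta)\le\nu_1-wt(\omega)$ removes only the components of $f_\varepsilon^*\omega$ of weight \emph{below} $wt(\omega)$. These are the components containing entries $X_i(f_\varepsilon^*\omega_j)$ with $wt(X_i)<wt(\omega_j)$, which Theorem \ref{thm-vanish} already sends to zero. The components of weight \emph{above} $wt(\omega)$ come from the starred entries of \eqref{diff} ($wt(X_i)>wt(\omega_j)$). They survive the wedge with $d\eta$, do not tend to zero, and have no counterpart in $f_P^*\omega$, because $D_Pf$ is graded.

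Here is a concrete example. Let $G_1=G_2=\mathbb{H}^1$ ($\nu_1=4$, $n=2$), with $[X,Y]=T$ and dual coframe $dx,dy,\theta$. Let $f$ be a smooth contact diffeomorphism of a bounded domain $U$ with $Tf^x$ not identically zero. For instance, take the time-$s$ map, $s\neq0$ small, of the contact flow of $V_h=(Yh)X-(Xh)Y+hT$ with $h=ty$; for it $Tf^x=s+O(s^2)$. Then $f\in W^{1,p}$ for every $p$ and $f$ is Pansu differentiable everywhere. Take $\omega=dx$ on the target and $\eta=\psi\theta$ on the source, with $\psi\in C_c^\infty(U)$. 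Then $d\eta=X\psi\,dx\wedge\theta+Y\psi\,dy\wedge\theta-\psi\,dx\wedge dy$, so $wt(\omega)+wt(d\eta)\le 1+3=\nu_1$. Also $f^*dx=f_P^*dx+(Tf^x)\,\theta$. Hence $\int_U f_\varepsilon^*dx\wedge d\eta=0$ for all $\varepsilon$, while $\int_U f_P^*dx\wedge d\eta=\int_U \psi\,Tf^x\,dx\wedge dy\wedge\theta$, which is nonzero for suitable $\psi$.

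The paper does not close this gap. Its proof uses Theorem \ref{Pullback} exactly as you do, and the proof of Theorem \ref{Pullback} rests on the same weight count. In fact the statement as written is false. The computation of Section 5 uses $\omega=dx_1\wedge dx_2$ and $\eta=\varphi$, which satisfy $wt(\omega)+wt(d\varphi)\le 2+2=\nu_1$, so it refutes the statement rather than showing sharpness.

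What is needed is the reversed hypothesis of Kleiner--M\"uller--Xie. In their negative-weight convention it reads $wt(\omega)+wt(d\eta)\le-\nu_1$; in positive weights, every component of $d\eta$ must have weight at least $\nu_1$ minus the least weight occurring in $\omega$. This removes the starred components. One must then show two things: that the lower-weight components vanish (this is where estimates like Theorem \ref{thm-vanish} enter), and that the weight-preserving components converge to $f_P^*\omega$.

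The second point does not follow from \eqref{diff}. Already $(f_\varepsilon^*\theta)(T)=X\bigl((f_\varepsilon^*\theta)(Y)\bigr)-Y\bigl((f_\varepsilon^*\theta)(X)\bigr)+\det D_Hf_\varepsilon$ is only seen to converge to $\det D_Hf$ in the sense of distributions. Your fallback of reading $f_P^*$ off the diagonal blocks of $Df$ is also unavailable, since for a Sobolev map the higher diagonal blocks are distributions, not functions. This is exactly where Pansu differentiability a.e.\ must be used. It is assumed in Theorem \ref{Pullback} but not in the statement, and Kleiner--M\"uller--Xie obtain it from $p>\nu_1$.
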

	\begin{proof}
		With theorem \ref{Pullback}, we have
		\begin{equation*}
			\int_U f_P^*(\omega)\wedge d\eta=\lim_{\varepsilon}\int_U f_\varepsilon^*(\omega)\wedge d\eta.
		\end{equation*} 
		Then 
		\begin{equation*}
			\begin{aligned}
				\int_U df_P^*(\omega)\wedge \eta &=(-1)^{deg\omega}\int_U f_P^*(\omega)\wedge d\eta\\
				&= (-1)^{deg\omega}\lim_{\varepsilon}\int_U f_\varepsilon^*(\omega)\wedge d\eta\\
				&= \lim_{\varepsilon}\int_U df_\varepsilon^*(\omega)\wedge \eta\\
				&= \lim_{\varepsilon}\int_U f_\varepsilon^*(d\omega)\wedge \eta\\
				&= \int_U f_P^*(d\omega)\wedge \eta \\
				&= 0,
			\end{aligned}
		\end{equation*}
		since $\omega$ is closed and $d\left(f_P^*(\omega)\wedge \eta\right)=0$.
	\end{proof}
	\section{Rigidity of Sobolev mappings on Carnot group}
	
	The pullback machinery developed in Section 3 provides a robust substitute for pointwise Pansu differentiability when $p < \nu_1$. Notably, Theorem \ref{thm-Pull-0} allows us to integrate $f_P^* \omega$ against exact forms $d\eta$ and pass limits via mollifications. This capability is crucial for constraining the algebraic structure of the Pansu differential $D_P f$. We now demonstrate how these integral identities force a factorization of the mapping $f$ into products of maps between isomorphic factors, thereby generalizing Xie's rigidity to the low-integrability regime.
	
	Let $\{G_{i}\}_{i\in I}$, $\{G_{j}^{\prime}\}_{j\in I^{\prime}}$ be finite collections of Carnot groups, where each $G_{i}$, $G_{j}^{\prime}$ is nonabelian and does not admit a nontrivial decomposition as a product of Carnot groups. Let $G:=\prod_{i\in I}G_{i}$, $G^{\prime}:=\prod_{j\in I^{\prime}}G_{j}^{\prime}$, and $\mathfrak{g}$, $\mathfrak{g}^{\prime}$ be the graded Lie algebras.
	
	\begin{thm}\label{thm-Rigidity}
		If $U\subset G$ is open and $f:U\to G^{\prime}$ is a Sobolev mapping such that\\
		(1) \ $f\in W^{1,p}_{\mathrm{loc}}$ for $p>max\left\{\text{homogeneous dim of}\ G_i\right\}-1$ .\\
		(2) \ $D_{P}f(x)$ exists and is an isomorphism for $\mu$-a.e. $x\in V$.\\
		Then up to reindexing the collections $\{G_{i}\}_{i\in I}$, $\{G_{j}^{\prime}\}_{j\in I^{\prime}}$ are the same up to isomorphism, and $f$ is locally a product of homeomorphisms, i.e. after shrinking $U$ if necessary, there is a bijection $\sigma:I\to I^{\prime}$ such that the composition $\pi_{\sigma(i)}\circ f:U\to G_{\sigma(i)}^{\prime}$ factors through the projection $\pi_{i}:U\to G_{i}$. In particular, if $U=\prod_{i}U_{i}$ where each $U_{i}$ is connected then $f$ is a product.
	\end{thm}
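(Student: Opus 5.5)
The proof follows the Kleiner--Müller--Xie strategy for $p>\nu$, with their pullback identity replaced by Theorem \ref{thm-Pull-0}. The argument has three stages.

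\textbf{Step 1: algebraic classification of the differential.} Since $D_Pf(x)$ is a graded isomorphism $\mathfrak{g}\to\mathfrak{g}'$ for a.e.\ $x$, the algebraic part of the Kleiner--Müller--Xie argument applies. The decomposition of a Carnot algebra into indecomposable nonabelian factors is unique up to permutation (the horizontal layer splits uniquely), so an isomorphism permutes the factors. Hence there is a measurable map $x\mapsto\sigma_x$, with $\sigma_x:I\to I'$ a bijection, such that $D_Pf(x)$ maps $\mathfrak{g}_i$ onto $\mathfrak{g}'_{\sigma_x(i)}$. In particular $G_i\cong G'_{\sigma_x(i)}$, which already gives the reindexing claim once we know $\sigma_x$ exists on a set of positive measure. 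The remaining task is to show that $\sigma_x$ is locally constant a.e. and that $\pi_{\sigma(i)}\circ f$ is independent of the coordinates in $G_k$ for $k\neq i$.

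\textbf{Step 2: the permutation is locally constant.} For each $j\in I'$, take the top-degree left-invariant volume form $\omega_j'$ of $G'_j$. It is closed, and its weight equals the homogeneous dimension $\nu_j'$. Since $D_Pf$ is graded, $f_P^*\omega_j'$ is supported on the components of the factor $G_{\sigma_x^{-1}(j)}$. Write $f_P^*\omega'_j=\sum_{i} a_{ij}(x)\,\mathrm{vol}_{G_i}$, where $a_{ij}$ is nonzero exactly when $\sigma_x(i)=j$. Then apply Theorem \ref{thm-Pull-0} with $\omega=\omega_j'$ and test forms $\eta$ built from a volume form on the complementary factors wedged with a compactly supported smooth function times a codimension-one form.

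The integrability hypothesis $p>\max\{wt(\omega),n+1\}$ is the delicate point. We have $wt(\omega_j')=\nu_j'\le Q$, while the theorem only assumes $p>Q-1$. I would therefore try to use instead the closed form of weight $\nu_j'-1$ obtained by contracting the volume form against a horizontal vector, or to exploit the fact that only $|D_Hf|^{\nu_j'-1}$ multiplied against the cutoff needs to be integrable. This is the step I expect to be the \textbf{main obstacle}, and reconciling the exponent $Q-1$ with the earlier pullback theorem may require a strengthening of Theorem \ref{thm-Pull-0}.

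Granting the identity $\int f_P^*\omega\wedge d\eta=0$, we get that the distributional derivatives of $a_{ij}$ along directions of $G_k$, for $k\neq i$, vanish. Consequently each $a_{ij}$ depends only on $x_i$. Because the $a_{ij}$ for fixed $x$ have disjoint supports across $i$, and each one depends only on its own variable, a product-set argument (Fubini) shows that the set $\{\sigma_x=\sigma\}$ has full measure in connected product neighborhoods. So $\sigma$ is constant after shrinking $U$.

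\textbf{Step 3: factorization.} With $\sigma$ fixed, for $k\neq i$ the horizontal derivative of $\pi_{\sigma(i)}\circ f$ in the directions of $\mathfrak{g}_k^{[1]}$ equals the corresponding block of $D_Pf$, which is zero a.e. By Theorem \ref{thm-approx-higher}, $D_Hf_\varepsilon\to D_Hf$ in $L^{p/(n+1)}_{loc}$, so in the limit $\pi_{\sigma(i)}\circ f$ has vanishing weak horizontal gradient along $G_k$. Since horizontal directions generate $G_k$, the function is a.e.\ constant on $G_k$-fibres within connected product sets. Therefore $\pi_{\sigma(i)}\circ f$ factors through $\pi_i$, and $f$ is a product; on $U=\prod U_i$ with connected $U_i$ this holds globally.

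Finally, to identify each factor map as a homeomorphism, I would invoke the $p>Q$ Hölder result of this section for continuity, together with the fact that a map with a.e.\ invertible Pansu differential is injective. For the range $Q-1<p\le Q$ this would be stated up to a.e.\ representatives.
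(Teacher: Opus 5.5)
\textbf{Gap in Step 2.} Your overall architecture matches the paper (and Kleiner--M\"uller--Xie): a measurable permutation $\sigma_x$ from the algebraic lemma, then the pullback identity of Theorem \ref{thm-Pull-0} to show that the coefficients of $f_P^*\omega$ do not depend on the other factors, then a Fubini/product-set argument. But as written, Step 2 does not prove the statement in the range it is about. You apply Theorem \ref{thm-Pull-0} to the top-degree forms $\mathrm{vol}_{G'_j}$, of weight $\nu'_j$. That requires $p>\nu'_j$, i.e.\ $p>Q$ for the largest factor. The new range $Q-1<p\le Q$ is left as an unresolved ``main obstacle'', and you suggest the pullback theorem might need strengthening. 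It does not.

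\textbf{The missing construction.} What is missing is actually carrying out the codimension-one variant you mention, which is exactly the paper's route. It is not a drop-in replacement, though. Take $\alpha_i=i_Y\mathrm{vol}_{G_i}$, which is closed because $d\,i_Y\mathrm{vol}_{G_i}=\mathcal{L}_Y\mathrm{vol}_{G_i}=0$. It has weight $\nu_i-1\le Q-1<p$. Also $Q-1\ge n+1$ because all factors are nonabelian, so both hypotheses of Theorem \ref{thm-Pull-0} hold as stated.

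The pullback is now $f_P^*\alpha_i=\sum_{j\in K_i}\sum_k a_{j,k}\,i_{X_{j,k}}\mathrm{vol}_{G_j}$, a combination of $(N_i-1)$-forms. Your test forms (volume forms of the complementary factors times a codimension-one form of $G_l$) have the wrong degree for it and would not isolate a single coefficient. You need one extra horizontal factor. Let $\theta_{j',k'}$ be the closed left-invariant one-form dual to $X_{j',k'}$, and for $X\in V_1(G_l)$ with $l\ne j'$ use $\eta=\varphi\beta$, where $\beta=\theta_{j',k'}\wedge i_X\mathrm{vol}_{G_l}\wedge\bigwedge_{i'\ne j',l}\mathrm{vol}_{G_{i'}}$. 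Then:
\begin{itemize}
\item $(D_Pf)^*\alpha_i\wedge\beta=\pm a_{j',k'}\,i_X\mathrm{vol}_G$.
\item The weight count is $(\nu_i-1)+1+1+(\nu_l-1)+\sum_{i'\ne j',l}\nu_{i'}=\nu$, since $\nu_{j'}=\nu_i$.
\item Theorem \ref{thm-Pull-0} therefore gives $Xa_{j',k'}=0$ distributionally.
\end{itemize}
Your Fubini/support argument then goes through unchanged. Use the vectors $a_j=(a_{j,1},\dots,a_{j,\dim V_1(G_i)})$, whose nonvanishing sets are $E_j=\{\sigma_x^{-1}(i)=j\}$, in place of your $a_{ij}$.

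\textbf{The homeomorphism paragraph.} Drop your final paragraph. A.e.\ invertibility of $D_Pf$ does not imply injectivity; branched maps such as $z\mapsto z^2$ already show this in the Euclidean setting, so that step is false. The paper does not prove any homeomorphism property either: its proof only establishes the factorization described after ``i.e.''.

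\textbf{Minor point on Step 3.} You do not need Theorem \ref{thm-approx-higher} there. The weak horizontal derivatives of $\pi_{\sigma(i)}\circ f$ along $V_1(G_k)$, $k\ne i$, are directly the corresponding blocks of $D_Pf$, which vanish.
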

	\begin{proof}
		We may assume without loss of generality that $I = I'$ and $\mathfrak{g}_{i} = \mathfrak{g}'_{i}$ for all $i \in I$, and so there is a measurable function $\sigma: U \to \mathrm{Perm}(I)$ such that $Df(x)(\mathfrak{g}_{i}) = \mathfrak{g}_{\sigma(x)(i)}$ for a.e. $x \in U$, since Lemma 7.7 in \cite{kleiner2021pansupullbackrigiditymappings}.
		
		Set 
		\begin{equation*}
			K_i:=\left\{{k\in \left\{1,\dots,n\right\}: G_k\cong G_i}\right\}
		\end{equation*} 
		We know $\sigma(i) = i$ if $|K_i| = 1$. Hence we may assume without loss of generality that $|K_i| \geq 2 \quad \forall i = 1, \dots, n$.
		
		The $N_i - 1$ form $i_Y \mathrm{vol}_{G_i}$ is closed since
		\begin{equation*}
			d(i_{X_{j,k}} \mathrm{vol}_{G_j})=\mathcal{L}_{X_{j,k}}\mathrm{vol}_{G_j}=0
		\end{equation*} 
		Let $\alpha_i = i_Y \mathrm{vol}_{G_i}$. Then the $N_i - 1$ form $\alpha_i$ is left-invariant, closed and has weight $-\nu_i + 1$.
		
		For $j \in K_i$ let $X_{j,k}$, $k = 1, \dots, \dim V_1(G_i)$ be a basis of $V_1(G_j)$. Then $i_{X_{j,k}} \mathrm{vol}_{G_j}$, $k = 1, \dots, \dim V_1(G_i)$ is a basis of the left-invariant forms on $G_j$ with degree $N_i - 1$ and weight $-\nu_i + 1$. Since pullback by a graded isomorphism preserves degree and weight we have
		
		\[
		f_P^* \alpha_i = \sum_{j \in K_i} \sum_{k = 1}^{\dim V_1(G_i)} a_{j,k} \, i_{X_{j,k}} \mathrm{vol}_{G_j}
		\]
		
		with $a_{j,k} \in L^{1}_{\mathrm{loc}}(U)$. Set
		
		\[
		a_j := \bigl( a_{j,1}, \dots, a_{j,\dim V_1(G_i)} \bigr)
		\]
		
		and
		
		\[
		E_j = \bigl\{ x \in U : \sigma^{-1}(x)(i) = j \bigr\}.
		\]
		
		Then $U \setminus \bigcup_{j \in K_i} E_j$ is a null set. Since $D_P f(x)$ is a graded automorphism for a.e. $x \in U$, we have, for all $j \in K_i$,
		
		\begin{equation}
			a_j \neq 0 \ \text{a.e. in } E_j, \qquad a_j = 0 \ \text{a.e. in } U \setminus E_j. \tag{6.11}
		\end{equation}
		
		Let \(\theta_{j',k'}\) be a basis of left-invariant one-forms which vanish on \(\oplus_{l=2}^{s}V_{l}(G_{j'})\) and which is dual to the basis \(X_{j',k'}\) of \(V_{1}(\mathfrak{g}_{j'})\), i.e. \(\theta_{j',k'}(X_{j',k})=\delta_{kk'}\). 
		Note that the forms \(\theta_{j',k'}\) are closed. 
		For \(l\in\{1,\ldots,n\}\setminus\{j'\}\), and \(X\in V_{1}(G_{l})\) consider the closed form
		\[
		\beta=\theta_{j',k'}\wedge i_X\operatorname{vol}_{G_{l}}\wedge\Bigl(\bigwedge_{i'\neq j',l}\operatorname{vol}_{G_{i'}}\Bigr).
		\]
		Then, for a.e. \(x\in U\),
		\[
		(D_{P}f)^{*}(x)\alpha_{i}\wedge\beta=\pm a_{j',k'}\,i_X\operatorname{vol}_{G}.
		\]
		In view of the assumption of $p$ we get from the Pullback Theorem \ref{thm-Pull-0},
		\begin{equation*}
			\begin{aligned}
				0&=\int_{U}f_{P}^{*}\alpha\wedge\beta\wedge d\varphi\\
				&=\pm\int_{U}a_{j',k'}\,X\varphi\,\operatorname{vol}_{G}\\
				&=\pm\int_{U}Xa_{j',k'}\,\varphi\,\operatorname{vol}_{G}
			\end{aligned}
		\end{equation*}
		for all \(\varphi\in C_{c}^{\infty}(U)\). 
		Thus,$X a_{j',k'} = 0$ distributionally. Since $V_{1} \cap \mathfrak{g}_{j^{\prime}}$ generates $\mathfrak{g}_{j^{\prime}}$ as a Lie algebra, we get that $Z a_{j',k'} = 0$ for all $Z \in \mathfrak{g}_{j^{\prime}}$. Arguing as in Theorem 7.1 in \cite{kleiner2021pansupullbackrigiditymappings}, we get that there is a $\sigma(j) \in I_{0}$ such that $a_{j} \neq 0$ and $a_{k} = 0$ a.e. in $U$. The index $i_{0}$ was arbitrary, so we conclude that there is a permutation $\sigma \in \operatorname{Perm}(I)$ such that $Df(x)(\mathfrak{g}_{i}) = \mathfrak{g}_{\sigma(i)}$ for a.e. $x \in U$. 
		
	\end{proof}
	
	The product structure established in Theorem \ref{thm-Rigidity} simplifies the regularity analysis significantly. Since $f$ locally factors as $f = f_1 \times \dots \times f_n$ and the Pansu differential is an isomorphism, the horizontal gradient $D_H f$ inherits the integrability properties of its components. When $p$ exceeds the maximum homogeneous dimension $Q$, we can apply the Sobolev embedding theorem component-wise to deduce optimal Hölder continuity for the entire mapping, as detailed in the following theorem.

	\begin{thm}\label{Holder}
		The Sobolev mapping $f\in W^{1,p}$ mentioned in theorem \ref{thm-Rigidity} is locally $(1-\frac{Q}{p})$-Hölder continuous, where $p>Q$ and $Q=\max \left\{\text{homogeneous dim of}\ G_i\right\}$.
	\end{thm}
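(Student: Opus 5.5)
The plan is to reduce to the classical Morrey--Sobolev embedding on each indecomposable factor, using the product structure given by Theorem \ref{thm-Rigidity}. Since $p>Q\geq \max_i \nu(G_i) > \max_i \nu(G_i)-1$, the hypotheses of Theorem \ref{thm-Rigidity} hold. So after shrinking $U$ to a product neighbourhood $\prod_i U_i$ of a given point, with each $U_i\subset G_i$ connected, we may write $f=\prod_i f_i$ with $f_i:U_i\to G'_{\sigma(i)}$ and $G'_{\sigma(i)}\cong G_i$. Because the sub-Riemannian distance on a product Carnot group is bi-Lipschitz equivalent to the sum (or maximum) of the factor distances, it suffices to show that each $f_i$ is locally $(1-\frac{Q}{p})$-H\"older continuous on $U_i$.

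The next step is to transfer the Sobolev regularity from $f$ to the factors. The horizontal gradient of $f$ splits into blocks: $D_Hf$ restricted to $V_1(G_i)$ is $D_Hf_i$ composed with the projection $\pi_i$, and it is independent of the other variables. Since $f\in W^{1,p}_{loc}(U)$, Fubini gives $\int_{\prod_i U_i}|D_Hf_i(x_i)|^p\,dx<\infty$, hence $f_i\in W^{1,p}_{loc}(U_i;G'_{\sigma(i)})$. The Sobolev property of $f_i$ as a metric-space-valued map (via the Kuratowski-type composition with $1$-Lipschitz functions $d(\cdot,a)$) follows in the same way from that of $f$. This step is routine but has to be stated carefully.

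The third step applies Morrey's embedding on the Carnot group $G_i$, whose homogeneous dimension is $\nu_i\le Q<p$. For each $1$-Lipschitz test function $u=d'(\cdot,a)\circ f_i$, the $(1,1)$-Poincar\'e inequality together with the standard telescoping argument over balls $B(x,2^{-k}r)$ (as in the maximal-function estimates of Theorem \ref{thm-approx}) gives
\begin{equation*}
d'(f_i(x),f_i(y))\lesssim d(x,y)^{1-\frac{\nu_i}{p}}\,\|D_Hf_i\|_{L^p(\lambda B)}
\end{equation*}
for $x,y$ in a ball $B\Subset U_i$, after redefining $f_i$ on a null set. Since $d(x,y)$ is bounded on compacts and $\nu_i\le Q$, we have $d^{1-\nu_i/p}\lesssim d^{1-Q/p}$ locally. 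This yields the exponent $1-\frac{Q}{p}$ uniformly over the factors, and combining the factors gives the result for $f$.

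I expect the main obstacle to be the second step: justifying rigorously that the factorization from Theorem \ref{thm-Rigidity}, which is obtained only almost everywhere and at the level of Pansu differentials, gives genuine Sobolev factor maps $f_i$ defined on $U_i$. To handle this, I would use the fact that $\pi_{\sigma(i)}\circ f$ has vanishing horizontal derivatives in the $G_j$-directions for $j\neq i$. It is therefore distributionally constant along those factors, by the same argument as for $Za_{j',k'}=0$ in the proof of Theorem \ref{thm-Rigidity}, and so a.e. equal to a function of $x_i$ alone.
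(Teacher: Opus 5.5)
Your proposal is correct and follows essentially the same route as the paper: you invoke Theorem~\ref{thm-Rigidity} to write $f$ locally as a product of factor maps $f_i$, use Fubini and the block structure of $D_Hf$ to get $f_i\in W^{1,p}_{loc}$, apply the Morrey--Sobolev embedding on each $G_i$ (where the homogeneous dimension is at most $Q<p$), and recombine the factor estimates. The differences are only in care, not in method: you obtain the exponent $1-\nu_i/p$ on each factor and weaken it to $1-\frac{Q}{p}$ on bounded sets, and you compare the product distance with the sum of factor distances where the paper telescopes using left-invariance, which makes explicit what the paper's proof leaves implicit.
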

	\begin{proof}
		From theorem \ref{thm-Rigidity}, we can decompose $f$ as the form of product. For convenience, assume 
		\begin{equation*}
			\begin{aligned}
				&f(x_1, x_2,\dots, x_n)=f_1(x_1)f_2(x_2)\cdot \cdot \cdot f_n(x_n)\\
				&f_i:G_i\to G_i'
			\end{aligned}
		\end{equation*}
		Therefore, the Pansu differential of $f$ has the following form
		\begin{equation*}
			D_Pf=
			\begin{pmatrix}
				D_Pf_1& & &\\
				      &D_Pf_2 & &\\
				      &       &\ddots &\\
				      &       & & D_Pf_n
			\end{pmatrix}
		\end{equation*}
		and
		\begin{equation*}
			D_Hf=
			\begin{pmatrix}
				D_Hf_1& & &\\
				&D_Hf_2 & &\\
				&       &\ddots &\\
				&       & & D_Hf_n
			\end{pmatrix}
		\end{equation*}
		It can also be written as 
		\begin{equation*}
			D_Hf_i\circ \pi_i=\pi_i \circ D_Hf_i.
		\end{equation*}
		Since $\left|D_Hf_i(x_i)\right| \leq \left|D_Hf(x)\right|$,
		for each $f_i$, we have
		\begin{equation*}
			\begin{aligned}
				&\int_{U_i}\left|D_Hf_i(x_i)\right|^q d\mu_i\\
				&=\frac{1}{\prod_{j\ne i}\mu_j(U_j)} \int_{U_1}\dots  \int_{U_{i-1}} \int_{U_{i+1}}\dots \int_{U_n} \left(\int_{U_i}\left|D_Hf_i(x_i)\right|^p d\mu_i\right) d\mu_1 \dots d\mu_{i-1} d\mu_{i+1}\dots d\mu_n \\
				&=\frac{1}{\prod_{j\ne i}\mu_j(U_j)} \int_{U} \left|D_Hf_i(\pi_{i}(x))\right|^p d\mu \\
				&=\frac{1}{\prod_{j\ne i}\mu_j(U_j)} \int_{U} \left|\pi_{i}\circ D_Hf_i(x)\right|^p d\mu \\
				&\leq\frac{1}{\prod_{j\ne i}\mu_j(U_j)} \int_{U} \left|D_Hf(x)\right|^p d\mu \\
				&< \infty
			\end{aligned}
		\end{equation*}
		Thus, locally each $f_i\in W_{loc}^{1,p}$ for any $1\leq i \leq n$. By the definition of $Q$ and $p>Q$, each $f_i\in C_{loc}^{0,1-\frac{Q}{p}}$ because of Sobolev embedding theorem. That means $f$ is locally $(1-\frac{Q}{p})$-Hölder continuous.
		\begin{equation*}
			\begin{aligned}
				d_{cc}(f(x),f(y))&= d_{cc}(f_1(x_1)f_2(x_2)\cdot \cdot \cdot f_n(x_n),f_1(y_1)f_2(y_2)\cdot \cdot \cdot f_n(y_n))\\
				&\leq d_{cc}(f_1(x_1)f_2(x_2)\cdot \cdot \cdot f_n(x_n),f_1(y_1)f_2(x_2)\cdot \cdot \cdot f_n(x_n))\\
				&+  d_{cc}(f_1(y_1)f_2(x_2)\cdot \cdot \cdot f_n(x_n),f_1(y_1)f_2(y_2)f_3(x_3)\cdot \cdot \cdot f_n(x_n))\\
				&+\dots\\
				&+ d_{cc}(f_1(y_1)f_2(y_2)\cdot \cdot \cdot f_{n-1}(y_{n-1})f_n(x_n),f_1(y_1)f_2(y_2)\cdot \cdot \cdot f_n(y_n))\\
				&=d_{cc}(f_1(x_1),f_1(y_1))+d_{cc}(f_2(x_2),f_2(y_2))+\dots +d_{cc}(f_n(x_n),f_n(y_n))\\
				&\leq C_1d_{cc}(x_1,y_1)^{1-\frac{Q}{p}}+C_2d_{cc}(x_2,y_2)^{1-\frac{Q}{p}}+\dots +C_nd_{cc}(x_n,y_n)^{1-\frac{Q}{p}}\\
				&\leq n\cdot\max \left\{C_1,\dots, C_n\right\}d(x,y)^{1-\frac{Q}{p}}
			\end{aligned} 
		\end{equation*} 
	\end{proof}
	
	$\mathbf{Example}$: We can construct a Sobolev mapping $F\in W^{1,p}$ and $F:G\to G$ with $Q-1<p<Q$	where $Q=\max \left\{\text{homogeneous dim of}\ G_i\right\}$ and $G=H_n\cdot H_n \cdot \dots \cdot H_n$ is the product of Heisenberg groups.
	
	On Heisenberg group, we have inversion
	\begin{equation*}
		\begin{aligned}
			&\mathcal{J}(x,y,t):=\left(\frac{-x}{\|(x,y,t)\|_\mathbb{H}^2},\frac{-y}{\|(x,y,t)\|_\mathbb{H}^2},\frac{-t}{\|(x,y,t)\|_\mathbb{H}^4}\right)\\
		\end{aligned}	
	\end{equation*}
	where $\|(x,y,t)\|_\mathbb{H}=\left((x^2+y^2)^2+16t^2\right)^\frac{1}{4}$.
	
	The upper gradient of it is
	\begin{equation*}
		\|D_H\mathcal{J}_p\|\approx \frac{1}{\|p\|_\mathbb{H}^2}
	\end{equation*}
	Because 
	\begin{equation*}
		\int_{B(e,1)}\left|D_H\mathcal{J}_p\right|^q dp\approx \int_{0}^{1} r^{-2q}\cdot r^{Q-1}dr, 
	\end{equation*}
	we know that $\left|D_H\mathcal{J}_p\right|\in L^q$ for $q<\frac{Q}{2}$.
	
	Therefore, in order to find a map in $W^{1,p}$ for $2n+1<p<2n+2=Q$, we are supposed to make some amendment for $\mathcal{J}$.
	At first, we denote 
	\begin{equation*}
		g_n(p)=\delta_\frac{1}{(2-\varepsilon)^n}\left(\mathcal{J}(p)\right)\cdot I_{\left\{\frac{1}{2^n}\leq\|p\|_\mathbb{H} <\frac{1}{2^{n-1}}\right\}}
	\end{equation*}
	and 
	\begin{equation*}
		\begin{aligned}
			f_1(p)&=g_1(p)\\
			f_{n+1}(p)&=
			\begin{cases}
				f_n   &,  \frac{1}{2-\varepsilon} \leq f_n <\frac{2^n}{(2-\varepsilon)^{n+1}} \\
				g_{n+1}
				&,  \frac{2^n}{(2-\varepsilon)^{n+1}}\leq g_{n+1}<\frac{2^{n+1}}{(2-\varepsilon)^{n+1}}
			\end{cases}   \\
			f(p)&=\lim_{n \to \infty} f_n(p)
		\end{aligned}
	\end{equation*}
	for $\varepsilon>0$ small enough. When $p\to e$, $\|f(p)\|_\mathbb{H}\lesssim \sum_{n=1}^{\infty}\frac{2^n}{(2-\varepsilon)^n}I_{\left\{\frac{1}{2^n}\leq\|p\|_\mathbb{H} <\frac{1}{2^{n-1}}\right\}} \to \infty$.
	Then 
	\begin{equation*}
		\begin{aligned}
			\left|D_Hf\right|(p)&\lesssim \sum_{n=1}^{\infty}\left|D_Hg_n\right|(p)\\
			&\leq \sum_{n=1}^{\infty}\frac{2^n}{(2-\varepsilon)^n}I_{\left\{\frac{1}{2^n}\leq\|p\|_\mathbb{H} <\frac{1}{2^{n-1}}\right\}}
		\end{aligned} 
	\end{equation*}
	Thus, we know
	\begin{equation*}
		\begin{aligned}
			\int_{B(e,1)}\left|D_Hf\right|^q(p)dp&\leq \sum_{n=1}^{\infty} \int_{\frac{1}{2^n}\leq\|p\|_\mathbb{H} <\frac{1}{2^{n-1}}}\left|D_Hf\right|^q(p)dp\\
			&\leq \sum_{n=1}^{\infty} \int_{\frac{1}{2^n}\leq\|p\|_\mathbb{H} <\frac{1}{2^{n-1}}}\frac{2^{nq}}{(2-\varepsilon)^{nq}}dp\\
			&\leq \sum_{n=1}^{\infty} \frac{2^{nq}}{(2-\varepsilon)^{nq}}\cdot 2^{nQ}\\
		\end{aligned}
	\end{equation*}
	In order to make $\left|D_Hf\right|\in L^q$ for $2n+1<q<2n+2=Q$,
	the $\varepsilon$ need to suffice $\varepsilon<2-\exp(-\frac{2n+2}{2n+1})$. Thus by taking this kind $\varepsilon$, we get a locally non-bounded Sobolev mapping $f\in W^{1,q}(B(e,1),H_n)$ for $2n+1<q<2n+2=Q$.
	
	Let's go back to $G=H_n\cdot H_n \cdot \dots \cdot H_n$ the product of Heisenberg groups. We now can construct a locally non-bounded Sobolev mapping $F\in W^{1,q}(B(e,1)\times B(e,1)\times \dots \times B(e,1) ,G)$ for $2n+1<q<2n+2=Q$. The $F(p_1,p_2,\dots,p_m)=f_1(p_1)\cdot\dots\cdot f_m(p_m)$ where all the $f_i$ are the $f$ mentioned above.  
	
	\begin{thm}\label{product-quo}
		Let $G = \tilde{G} / \exp(K)$ be a product quotient of homogeneous dimension $\nu$, and $\mathfrak{g} = V_1 \oplus V_2$ be the layer decomposition of its graded Lie algebra. Suppose $p > Q-1$, $U \subset G$ is open, and $f : G \supset U \to G$ is a $W^{1,p}_{\mathrm{loc}}$-mapping such that $D_P f(x) : \mathfrak{g} \to \mathfrak{g}$ exists almost everywhere and the sign of its determinant is constant almost everywhere. Then there is a locally constant assignment of a permutation $U \ni x \mapsto \sigma_x \in S_n$ such that:\\
		(1)\  For a.e. $x \in U$, the Pansu derivative $D_P f(x)$ permutes the subalgebras $\mathfrak{g}_1, \dots, \mathfrak{g}_n \subset \mathfrak{g}$ in accordance with the permutation $\sigma$:
			\[
			D_P f(x)(\mathfrak{g}_i) = \mathfrak{g}_{\sigma_x(i)}.
			\]
			
		(2)\  For every $x \in U$, cosets of $G_i$ are mapped to cosets of $G_{\sigma_x(i)}$ locally near $x$: there is neighborhood $V_x$ of $x$ such that for every $1 \leq j \leq n$, and every $y \in V$,
			\[
			f(V_x \cap yG_i) \subset f(y)G_{\sigma_x(i)}.
			\]
	\end{thm}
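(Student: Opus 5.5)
The plan is to follow the template of Theorem \ref{thm-Rigidity}, which itself follows Kleiner--M\"uller--Xie, and to replace their pullback theorem (valid for $p>\nu$) by Theorem \ref{thm-Pull-0}. That theorem applies as long as $p>\max\{wt(\omega),n+1\}$ (here $n=2$) and $wt(\omega)+wt(d\eta)\le \nu$. The argument has three stages: an algebraic one, an analytic one, and a topological one.

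\textbf{Step 1 (algebraic).} We recall the structure of a product quotient $G=\tilde G/\exp(K)$. Here $\tilde G=\prod_i \tilde G_i$ is a product of step-two groups, $K\subset\bigoplus_i V_2(\tilde{\mathfrak g}_i)$ is central, and $\mathfrak g_i$ is the image of $\tilde{\mathfrak g}_i$. The subalgebras $\mathfrak g_1,\dots,\mathfrak g_n$ share the second layer $V_2$ but have disjoint first layers. The first layers $V_1\cap\mathfrak g_i$ are characterized algebraically, for instance as the irreducible components of the graph on $V_1$ whose edges are pairs $X,Y$ with $[X,Y]\neq 0$. Consequently every graded automorphism of $\mathfrak g$ permutes them, and so does $D_Pf(x)$ wherever it is an isomorphism. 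The constant sign of the determinant guarantees that $D_Pf(x)$ is nondegenerate a.e.; I expect to need a short lemma showing that a degenerate graded homomorphism would force the determinant to vanish, using that the sign is constant and nonzero. This yields a measurable map $x\mapsto\sigma_x\in S_n$ satisfying $D_Pf(x)(\mathfrak g_i)=\mathfrak g_{\sigma_x(i)}$ a.e., which gives (1) pointwise.

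\textbf{Step 2 (analytic, local constancy of $\sigma$).} For each $i$ we consider the closed left-invariant form
\[
\alpha_i=i_Y\mathrm{vol}_{V_1\cap\mathfrak g_i}\wedge\bigwedge_{j\ne i}\mathrm{vol}_{V_1\cap \mathfrak g_j}\wedge \mathrm{vol}_{V_2},
\]
or a suitable variant of codimension one whose weight is $\nu-1$. Its pullback $f_P^*\alpha_i$ expands as $\sum_j\sum_k a_{j,k}\,i_{X_{j,k}}\mathrm{vol}_G$, and $a_j\ne0$ exactly on $E_j=\{\sigma_x(i)=j\}$. Pairing with closed one-forms $\theta_{j',k'}$ and a test function $\varphi$, the total weight is $\le\nu$ and the form degree is $\nu-1$. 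Since $p>Q-1$, Theorem \ref{thm-Pull-0} applies and gives
\[
\int a_{j',k'}X\varphi=0 \quad\text{for } X\in V_1\cap\mathfrak g_l,\ l\ne j'.
\]
Combining this with the analogous identity obtained from the $\mathfrak g_{j'}$ directions, we get $Za_{j',k'}=0$ distributionally for all $Z\in\mathfrak g$, so $a_{j',k'}$ is locally constant. Therefore each $E_j$ is, up to a null set, a union of connected components of $U$, and $\sigma_x$ is locally constant. I expect this step to be the main obstacle. In a product quotient the shared center $V_2$ spoils the clean splitting used in Theorem \ref{thm-Rigidity}, so one must check that the chosen $\alpha_i$ is closed (its $d_0$ must vanish despite the common second layer) and that its weight bookkeeping stays within $\nu$. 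If $i_Y\mathrm{vol}$ fails, I would first try forms built only from $V_1$-covectors wedged with $\mathrm{vol}_{V_2}$, since these are automatically closed.

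\textbf{Step 3 (topological, coset preservation).} Fix $x$ and a small ball $V_x$ on which $\sigma\equiv\sigma_x$. Then $D_Pf$ maps $V_1\cap\mathfrak g_i$ into $\mathfrak g_{\sigma_x(i)}$ a.e. on $V_x$. We mollify: by Theorem \ref{thm-vanish} and Theorem \ref{thm-approx-higher}, $X(f_\varepsilon^*\theta)\to0$ in $L^{p/2}_{loc}$ for every $X\in V_1\cap\mathfrak g_i$ and every left-invariant one-form $\theta$ annihilating $\mathfrak g_{\sigma_x(i)}$ and the center. Integrating along horizontal curves in $yG_i$, as in the Corollary to Theorem \ref{thm-vanish}, and using Fubini over the family of cosets, we obtain that for a.e. $y$ the map $f$ sends $V_x\cap yG_i$ into $f(y)G_{\sigma_x(i)}$ (modulo the central direction, which is handled by the same argument applied to contact forms). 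To upgrade this from a.e. $y$ to every $y$, we use the continuity of $f$ when $p>\nu$ or, in general, a precise representative, together with the closedness of the cosets.
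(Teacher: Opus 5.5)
Your overall strategy, rerunning the Kleiner--M\"uller--Xie argument with Theorem \ref{thm-Pull-0} in place of their pullback theorem, is what the paper does; its proof is a one-line deferral to that argument. The mechanism you supply for the decisive Step 2, however, does not work.

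Your $\alpha_i$ is just $\pm i_Y\mathrm{vol}_G$ with $Y\in V_1\cap\mathfrak g_i$. It has degree $N-1$ with $N=\dim G$; its weight, not its degree, is $-(\nu-1)$. So $f_P^*\alpha_i\wedge\theta_{j',k'}\wedge d\varphi$ has degree $N+1$ and vanishes identically, and the proposed pairing is empty. The only admissible test is $\eta=\varphi$. Since $\Phi^*(i_Y\mathrm{vol}_G)=\det\Phi\, i_{\Phi^{-1}Y}\mathrm{vol}_G$ for $\Phi=D_Pf(x)$, Theorem \ref{thm-Pull-0} then gives only the single Piola-type identity $\sum_{j,k}X_{j,k}a_{j,k}=0$.

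The conclusion you aim for is also false. The $a_{j,k}$ are the components of $\det(D_Pf)\,(D_Pf)^{-1}Y$, and these are not locally constant for non-affine maps satisfying every hypothesis. For Heisenberg factors, take a product of strict contactomorphisms $(z,t)\mapsto(\phi_l(z),t+h_l(z))$ with $\phi_l$ area preserving and non-affine. It commutes with all central translations, so it descends to any $\tilde G/\exp(K)$.

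Already in the product case the pullback identities only give invariance of $a_{j'}$ along the \emph{other} factors. Local constancy of $\sigma$ then comes from a separate fact: the sets $E_j$, each depending on a different factor, cannot disjointly cover a product region unless all but one are null. In Theorem \ref{thm-Rigidity} there is room for the source form $\beta$ only because $\alpha_i=i_Y\mathrm{vol}_{G_i}$ lives on a single factor. In a product quotient the shared second layer destroys $\mathrm{vol}_{G_i}$. The real work is to find closed target forms and closed source forms with $wt(\omega)+wt(d\eta)\le\nu$ that isolate $a_{j',k'}$.

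Your fallback does not supply them. $V_1$-covectors wedged with $\mathrm{vol}_{V_2}$ are not closed in general, because $d$ of a $V_2$-covector is a nonzero element of $\bigoplus_l\Lambda^2(V_1\cap\mathfrak g_l)^*$. A non-closed source form $\beta$ also spoils the weight count, since $d\beta$ has the same weight as $\beta$. Finally, a target form of weight $\nu-1$ needs $p>\nu-1$ in Theorem \ref{thm-Pull-0}, which matches $p>Q-1$ only if $Q=\nu$.

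Step 1 is also not right as stated. On all of $V_1$ the non-commuting graph is connected, because a mixed vector $X_1+X_2$ fails to commute with vectors from both factors. Restricted to a basis, the graph is not invariant under automorphisms. More to the point, the permutation property is not automatic for quotients of products. For example, $(\mathbb H\times\mathbb H)/\exp(\mathbb R(Z_1-Z_2))$ is the Heisenberg group $\mathbb H^2$, whose graded automorphisms include symplectic rotations mixing the two factors. So Step 1 has to be the algebraic lemma of Kleiner--M\"uller--Xie, which uses the nondegeneracy conditions in the definition of a product quotient.

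In Step 3, the vanishing of $X(f_\varepsilon^*\theta)$ for horizontal $\theta$ annihilating $\mathfrak g_{\sigma_x(i)}$ does not follow from Theorem \ref{thm-vanish}, which only concerns higher-layer forms. It follows from $D_Hf_\varepsilon\to D_Hf$ together with part (1). For $p\le\nu$ the map is only defined almost everywhere. Statement (2) for \emph{every} $y$ therefore cannot be recovered from a precise representative; only a version along almost every coset is available.
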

	\begin{proof}
		We can use the same way to prove as in \cite{kleiner2021pansupullbackrigiditymappings}, but instead we use the modification in theorem \ref{approx}. 
	\end{proof}
	From (2) in theorem \ref{product-quo}, locally $f(V_x \cap yG_i) \subset f(y)G_{\sigma_x(i)}$, which means we can define the lifting $F:\tilde{G}\to \tilde{G}$. For convenience, assume $f(e)=e$.
	\begin{equation}
		f(y_i)=f(e\cdot y_i)=f(e)\cdot f(y_i) \in f(e)G_{\sigma_x(i)},
	\end{equation} 
	where $y_i\in G_i$.
	Thus we can define each $f_i:=f|_{G_i}$. Consequently, $F$ can be defined as 
	\begin{equation}
		F(y_1,y_2,\dots,y_n)=f_1(y_1)\cdot\dots\cdot f_n(y_n)
	\end{equation}
	With the help of the Sobolev contact lift theorem, the mapping $f:G\to G$ has the same integrability as contact lift $F:\tilde{G}\to \tilde{G}$. That is to say $f\in W^{1,p}(G,G)$ and $F\in W^{1,p}(\tilde{G},\tilde{G})$.
	\begin{thm}
		The $f$ mentioned in the theorem \ref{product-quo}, with $p>Q$, is $(1-\frac{Q}{p})$-Hölder continuous locally.  
	\end{thm}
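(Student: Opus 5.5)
We follow the route of Theorem \ref{Holder}: first factor $f$ into maps on the factors, then apply the Sobolev embedding to each factor, then recombine. The difference here is that $G=\tilde{G}/\exp(K)$ is a product quotient rather than a product. So we first pass to the contact lift $F:\tilde{G}\to\tilde{G}$ built above, obtain the Hölder estimate for $F$, and then push it back down to $G$ through the quotient map $\pi:\tilde{G}\to G$.

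\textbf{Step 1 (factor-wise Sobolev regularity).} Fix $x\in U$ and the neighbourhood $V_x$ from part (2) of Theorem \ref{product-quo}; after translating, assume $f(e)=e$. As above, set $f_i:=f|_{G_i}$, so that $F(y_1,\dots,y_n)=f_1(y_1)\cdots f_n(y_n)$ and $F\in W^{1,p}_{loc}$. We then rerun the Fubini computation of Theorem \ref{Holder}. By part (1) of Theorem \ref{product-quo}, $D_PF$ is block diagonal up to the locally constant permutation $\sigma_x$, so $|D_Hf_i\circ\pi_i|\le |D_HF|$ almost everywhere. Averaging over the complementary factors gives $f_i\in W^{1,p}_{loc}(G_i;G_{\sigma_x(i)})$, with norm controlled by $\|D_HF\|_{L^p}$.

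\textbf{Step 2 (embedding and recombination).} Each $G_i$ has homogeneous dimension at most $Q$ and $p>Q$. The Morrey-type Sobolev embedding on Carnot groups then makes every $f_i$ locally $(1-\frac{Q}{p})$-Hölder. The telescoping estimate from the proof of Theorem \ref{Holder}, which uses left invariance of $d_{cc}$, gives
\[
d_{cc}(F(y),F(y'))\le n\max_i C_i\, d(y,y')^{1-\frac{Q}{p}}
\]
locally on $\tilde{G}$.

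\textbf{Step 3 (descent to $G$).} We use $f\circ\pi=\pi\circ F$ locally. The projection $\pi$ is a 1-Lipschitz homomorphism of Carnot groups, since it is a submetry for the CC metrics. Given $a,b\in V_x$, we choose lifts $\tilde a,\tilde b$ with $d(\tilde a,\tilde b)=d(a,b)$. Such lifts exist because $\pi$ is a submetry, and we take $\tilde a$ in a fixed compact set so that the local constants apply. Then
\[
d(f(a),f(b))\le d(F(\tilde a),F(\tilde b))\lesssim d(a,b)^{1-\frac{Q}{p}}.
\]

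\textbf{Main obstacle.} The hard step is justifying Step 1 and its interplay with Step 3. Specifically, we must show two things: that the lift $F$ is genuinely well defined and in $W^{1,p}$ near the relevant points, and that the factor maps $f_i$ are Sobolev with the claimed bound, given that in the quotient the factors $G_i$ are no longer independent coordinates. Our first attempt would be to do the Fubini argument on $\tilde{G}$, where the product structure is honest. We would invoke the Sobolev contact lift statement quoted above to transfer integrability from $f$ to $F$, and would check that $\exp(K)$, which is central and lies in the second layer, does not affect the horizontal gradients.
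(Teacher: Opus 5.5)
Your proposal is correct and follows essentially the same route as the paper: lift $f$ to the product map $F$ on $\tilde{G}$, obtain the $(1-\frac{Q}{p})$-Hölder bound for $F$ by the factor-wise Sobolev embedding argument of Theorem \ref{Holder}, and push it down through $\pi$ using $f\circ\pi=\pi\circ F$. The differences are minor. The paper checks that $D_PF$ is an isomorphism a.e.\ and then cites Theorem \ref{Holder} directly instead of rerunning the Fubini argument. Its descent step bounds $\|\pi\|$ and asserts $d_{\tilde G}(a,b)\le d_G(a,b)$ by horizontally lifting a $G$-geodesic, which is a looser form of your submetry choice of lifts $\tilde a,\tilde b$ with $d(\tilde a,\tilde b)=d(a,b)$.
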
 
	\begin{proof}
		Since $D_{P}f(x)$ is an isomorphism for $\mu$-a.e. $x\in V$, the $D_{H}f(x)$ is also an isomorphism for $\mu$-a.e. $x\in V$. Because $F$ is the contact lift of $f$ and the central extension is not for the horizontal layer, $D_HF=D_Hf$ is an isomorphism for $\mu$-a.e. $x\in V$. Due to Lie algebra homorophism $D_PF$, $D_PF$ is an isomorphism for $\mu$-a.e. $x\in V$. Applying theorem \ref{Holder}, $F$ is $(1-\frac{Q}{p})$-Hölder continuous locally.
		
		With the commuting diagram
		\begin{displaymath}
			\xymatrix{
				\tilde{G} \ar[d]^{F} \ar[r]^{\pi} & G \ar[d]^{f}
				\\
				\tilde{G} \ar[r]^{\pi} & G
			}
		\end{displaymath}
		thus
		\begin{equation}
			\begin{aligned}
				d_{G}(f(a),f(b))&=d_{G}(f\circ\pi (a),f\circ\pi (b))\\
				&=d_{G}(\pi\circ F (a),\pi\circ F (b))\\
				&\leq \|\pi\|d_{\tilde{G}}(F (a),F (b))\\
				&\leq C\cdot \|\pi\|d_{\tilde{G}}(a,b)^{1-\frac{Q}{p}}\\
				&\leq C\cdot \|\pi\|d_{G}(a,b)^{1-\frac{Q}{p}}
			\end{aligned}
		\end{equation}
		the last inequality is because the geodesic $\gamma_{ab}$ connecting $a$ and $b$ on $G$ is in the set of $\left\{\text{the horizontal curves on $\tilde{G}$ connecting $a$ and $b$}\right\}$, thus $\gamma_{ab}$ is not the shortest horizontal curve connecting $a$ and $b$, which means $d_{\tilde{G}}(a,b) \leq\left|\gamma_{ab} \right|$. 
		
		It is easy to show that there is a constant $C_1$ such that the $\|\pi\|\leq C_1$. Because the coordinate of $F(a)F(b)^{-1}=\left(\pi \circ F(a) (\pi \circ F(a))^{-1},\lambda(a,b)\right)$ where the $\lambda(a,b)$ is the mapping from $G\times G\to \exp(K)$
		\begin{equation}
			\begin{aligned}
				d_{\tilde{G}}(F(a)F(b)^{-1},0)&\approx \|\pi \circ F(a) (\pi \circ F(a))^{-1}\|_\mathbb{K}+\|\lambda(a,b)\|_\mathbb{R}^{\frac{1}{2}}\\
				&\geq \|\pi \circ F(a) (\pi \circ F(a))^{-1}\|_\mathbb{K}\\
				&\approx d_{G}(\pi \circ F(a) (\pi \circ F(a))^{-1},0)\\
				&=d_{G}(\pi\circ F (a),\pi\circ F (b)),
			\end{aligned}
		\end{equation}
		where the $\|\cdot\|_\mathbb{K}$ is the quasimetric mentioned in \eqref{metric-K} and the $\|\cdot\|_\mathbb{R}$ is the Euclidean metric. 
		
		In conclusion, $d_{G}(f(a),f(b))\lesssim d_{G}(a,b)^{1-\frac{Q}{p}}$ locally.
	\end{proof}
	
	\section{The sharpness of condition $wt(\omega)+wt(d\eta)\leq \nu_1$}
	Theorems \ref{Pullback} and \ref{thm-Pull-0} relied crucially on the weight condition $wt(\omega) + wt(d\eta) \le \nu_1$. This condition is not merely technical; it reflects the scaling limit of the Carnot group geometry relative to the Sobolev exponent. To demonstrate that this condition is sharp---meaning the conclusions fail if it is violated---we construct explicit counterexamples using the Heisenberg inversion map. These examples show that the higher-layer components of the differential cannot be controlled if the form weights are too large.

	\begin{thm}
		The condition $wt(\omega)+wt(d\eta)\leq \nu_1$ in theorem \ref{thm-Pull-0} is sharp.
	\end{thm}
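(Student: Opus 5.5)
The plan is to construct an explicit counterexample on the first Heisenberg group $\mathbb{H}^1$ (so $\nu_1=4$, $N=3$) using the inversion $\mathcal{J}$ introduced in the Example of Section 4. We will exhibit a mapping $f\in W^{1,p}$ with $p>\max\{wt(\omega),n+1\}$, a closed form $\omega$ and a test form $\eta$ with $wt(\omega)+wt(d\eta)=\nu_1+1$, for which $\int_U f_P^*\omega\wedge d\eta\neq0$. Concretely, we will take $f$ to be a suitably truncated and dilated inversion $\mathcal{J}$ centered at the origin, cut off to remain in $W^{1,p}$. We will choose $\omega=\theta=dt-\frac12(x\,dy-y\,dx)$ composed with the appropriate closed combination; more precisely, we will use the weight-$3$ closed $2$-form $d\theta$ or the closed weight-$4$ form $\mathrm{vol}$ pulled back against a $0$-form $\eta$. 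The natural choice is $\omega=\mathrm{vol}_{\mathbb{H}^1}=dx\wedge dy\wedge\theta$ (degree $3$, weight $4$), together with $\eta$ a compactly supported function, so that $d\eta$ has weight at least $1$ and $wt(\omega)+wt(d\eta)\ge 5>\nu_1$.

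The key steps are as follows. First, we will compute $f_P^*\mathrm{vol}=J_f\,\mathrm{vol}$, where $J_f=\det D_Pf$. For the inversion this is $\approx\|p\|_{\mathbb{H}}^{-2\nu_1}$ with a constant sign. Second, we will regularize the singularity at $e$ so that $f\in W^{1,p}$ for the required $p$. Following the Example, we replace $\mathcal{J}$ on dyadic shells by dilated copies $g_n$, and check that $|D_Hf|\in L^p$ while $J_f\in L^1_{loc}$; the latter is needed because $|J_f|\lesssim|D_Hf|^{\nu_1}$. Third, we will evaluate $\int_U J_f\,\mathrm{vol}\wedge d\eta$. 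For a $3$-form wedge a $1$-form on a $3$-manifold we instead pair $f_P^*\omega$ of degree $2$, taking $\omega=dx\wedge\theta$ (weight $3$, closed after adding a correction term), with $d\eta$ for $\eta$ a function whose horizontal derivative is nonzero. Weight bookkeeping then gives $wt(\omega)+wt(d\eta)=3+2=5>4$ when $d\eta$ has a vertical component $T\eta\,\theta$. The aim is to show that the integral reduces, via the degree-theoretic or flux identity, to a boundary flux of the image through a small sphere around $e$. Because $f$ winds the small sphere onto a large sphere, this flux is nonzero, whereas the smooth approximations satisfy the Stokes identity only up to an error concentrating at $e$.

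Finally, we will compare with the mollifications: $\int f_\varepsilon^*\omega\wedge d\eta=\pm\int f_\varepsilon^*(d\omega)\wedge\eta=0$, while $\int f_P^*\omega\wedge d\eta\ne0$. This shows that the limit interchange of Theorem \ref{Pullback} fails exactly when the weight condition is violated.

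The main obstacle is the third step. We must choose $\omega,\eta$ so that the weight count exceeds $\nu_1$ by exactly one, while still extracting a nonzero, explicitly computable integral from the inversion's singular Jacobian. To do this we would first try radial $\eta$ in the Korányi norm, exploiting the dilation equivariance $\mathcal{J}\circ\delta_\lambda=\delta_{1/\lambda}\circ\mathcal{J}$ to reduce the integral to a one-dimensional computation over $\|p\|_{\mathbb{H}}$. We would then verify that the contribution of the shell near $e$ does not vanish as the cutoff shrinks.
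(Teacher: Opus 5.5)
There is a genuine gap in your third step: neither of your candidate pairs $(\omega,\eta)$ can witness a failure caused by the weight condition, and the singularity-based mechanism you propose probes a different hypothesis. The pair $\omega=\mathrm{vol}_{\mathbb H^1}$ with $\eta$ a function is void. Theorem~\ref{thm-Pull-0} requires $\eta\in\Omega_c^{N-k-1}$, which has degree $-1$ here, and $\omega\wedge d\eta$ would be a $4$-form on a $3$-manifold. The pair $\omega=dx\wedge\theta$ (already closed, so no correction is needed) with $\eta$ a function cannot fail, however you count weights. The vertical part $T\eta\,\theta$ of $d\eta$ is killed by $\theta$, so only the weight-$1$ part of $d\eta$ pairs with $\omega$, and the count is $3+1=\nu_1$. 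In the Kleiner--M\"uller--Xie convention, which the paper itself uses in Section~4, this reads $wt(\omega)+wt(d\eta)=-3-1=-\nu_1$, so the condition actually holds. Indeed, if $f$ is smooth and contact with $f^*\theta=\lambda\theta$, then $f^*(dx\wedge\theta)=\lambda\,(Xf^1\,dx+Yf^1\,dy)\wedge\theta=f_P^*(dx\wedge\theta)$. Stokes then gives $\int f_P^*\omega\wedge d\eta=\pm\int f^*(d\omega)\wedge\eta=0$ whenever $f$ is smooth and contact near the support of $\eta$. So any failure you get for this $\omega$ can only come from the singularity at $e$, that is, from the integrability hypothesis, and it says nothing about the weight condition. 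Your flux heuristic also gives zero: $dx\wedge\theta=d\bigl(x\,dt-\tfrac{1}{4}x^2\,dy\bigr)$ is exact, so $\int_{\partial B_r}f^*\omega=0$ however the small sphere is wound onto a large one. Your second step is unsound as stated too. In the dyadic-shell construction of Section~4, adjacent pieces $g_n,g_{n-1}$ are dilations of the same point $\mathcal{J}(p)$ by factors differing by $2-\varepsilon$. The glued map therefore jumps across each sphere $\|p\|_{\mathbb H}=2^{-n}$ and is not Sobolev.

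The missing observation is that the condition is violated when $|wt(\omega)|$ is too \emph{small}. The difference $f^*\omega-f_P^*\omega$ consists of terms of strictly higher weight than $\omega$, and the condition is what kills their pairing with the weight-$1$ part of $d\eta$. The right pair is the one you mentioned and then discarded after mislabelling its weight: $\omega=-d\theta=dx\wedge dy$ has weight $2$, not $3$. With $\eta=\varphi\in C_c^\infty(U)$ one gets $f_P^*\omega\wedge d\varphi=a\,T\varphi\,dx\wedge dy\wedge\theta$, where $a=\det D_Hf$, so the identity would force $Ta=0$ in the sense of distributions. This is exactly the paper's argument. Dropping the condition would give $Z(\det D_Hf)=0$, and the paper contradicts this with the Heisenberg inversion, whose horizontal differential has norm $\approx\|p\|_{\mathbb H}^{-2}$. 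No truncation is needed. On a domain $U$ avoiding the origin, the inversion is smooth, contact, Pansu differentiable everywhere, and in $W^{1,p}_{loc}(U)$ for every $p$. Its horizontal determinant $a=\pm c\,\|p\|_{\mathbb H}^{-4}$ satisfies $Ta\neq0$ off $\{t=0\}$, so the weight condition is the only hypothesis violated. Use the genuine conformal Kor\'anyi inversion here: the formula displayed in Section~4 rescales $(x,y)$ by a real factor and is not contact off the $t$-axis.
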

	\begin{proof}
		We would give a proof by contradiction. If the condition is removable, the we can get the following claim.
		
		Claim: For $f$ mentioned in theorem \ref{thm-Pull-0}, $Z\left(det(D_Hf)\right)=0$, where $Z$ is a left-invariant vector field in the $i$-th layer for $i\geq 2$.
		  
		Let's start with the case on Heisenberg group $\mathbb{H}^1$. 
		
		Since $f$ is Pansu differential a.e., $D_f$ is a Lie algebra homomorphism a.e.. $f_P^*(dx_1\wedge dx_2)=adx_1\wedge dx_2$. By theorem \ref{thm-Pull-0},
		\begin{equation*}
			\begin{aligned}
				0&=\int_{U} f_P^*(dx_1\wedge dx_2)\wedge d\varphi\\
				&=\int_{U}adx_1\wedge dx_2 \wedge(X_1\varphi dx_1 +X_2\varphi dx_2 +Z \varphi \alpha)\\
				&=\int_{U}aZ \varphi dV\\
				&=-\int_{U}Za \varphi dV,
			\end{aligned}
		\end{equation*}
		where $\varphi$ is any smooth test function. Then $Za=0$ a.e., where $a(p)=det(D_Hf)(p)$.
		
		For general step-$n$ Carnot group $G$, denote the left-invariant co-vector field $dx_1, dx_2, \dots ,dx_k$ as the basis of the first layer and $\left\{\omega_j\right\}$ are the left-invariant co-vector field in higher layers. We have $f_P^*(dx_1\wedge dx_2\wedge \dots \wedge dx_k)=adx_1\wedge dx_2\wedge \dots \wedge dx_k$.
		
		Because $dx_1\wedge dx_2\wedge \dots \wedge dx_k$ is closed, 
		\begin{equation*}
			\begin{aligned}
				0&=\int_{U} f_P^*(dx_1\wedge dx_2\wedge \dots \wedge dx_k)\wedge d\left(\varphi\omega_{1}\wedge \omega_{2}\wedge \dots \wedge\omega_{j-1}\wedge\omega_{j+1}\wedge\dots \wedge \omega_m\right)\\
				&=\int_{U} f_P^*(dx_1\wedge dx_2\wedge \dots \wedge dx_k)\wedge \varphi d\left(\omega_{1}\wedge \omega_{2}\wedge \dots \wedge\omega_{j-1}\wedge\omega_{j+1}\wedge\dots \wedge \omega_m\right)\\
				&\pm \int_{U} f_P^*(dx_1\wedge dx_2\wedge \dots \wedge dx_k)\wedge \left(\omega_{1}\wedge \omega_{2}\wedge \dots \wedge\omega_{j-1}\wedge\omega_{j+1}\wedge\dots \wedge \omega_m\right)\wedge d \varphi\\
				&=\pm \int_{U} adx_1\wedge dx_2\wedge \dots \wedge dx_k \wedge \left(\omega_{1}\wedge \omega_{2}\wedge \dots \wedge\omega_{j-1}\wedge\omega_{j+1}\wedge\dots \wedge \omega_m\right)\wedge d \varphi\\
				&=\pm \int_{U} a\left(\iota_{Z_j}dV\right)\wedge d\varphi\\
				&=\pm \int_{U} a\left(\iota_{Z_j}dV\right)\wedge (\sum X_i\varphi dx_i+\sum Z_l\varphi \omega_l)\\
				&=\pm \int_{U} aZ_j\varphi dV\\
				&=\mp \int_{U} Z_ja\varphi dV,
			\end{aligned}
		\end{equation*}
		where the third equation is because $d\omega=dx\wedge (\cdot)$. That means $Z_jdet(D_Hf)=Z_ja=0$ a.e. for any vector field in the $i$-th layer $i\geq 2$.
		
		However, we know that the inversion on Heisenberg group 
		\begin{equation*}
			\begin{aligned}
				&\mathcal{J}(x,y,t):=\left(\frac{-x}{\|(x,y,t)\|_\mathbb{H}^2},\frac{-y}{\|(x,y,t)\|_\mathbb{H}^2},\frac{-t}{\|(x,y,t)\|_\mathbb{H}^4}\right)\\
			\end{aligned}	
		\end{equation*}
		is a locally smooth and Pansu differentiable mapping expect for origin point. The upper gradient of it is
		\begin{equation*}
			\|D_H\mathcal{J}_p\|\approx \frac{1}{\|p\|_\mathbb{H}^2}
		\end{equation*}
		which is not invariant under the movement of $Z$. A contradiction. 
	\end{proof}

	\section{Non-embedding theorem for contact Sobolev mappings}
	The stratified convergence results from Theorem \ref{thm-vanish} have profound geometric consequences beyond rigidity. In particular, they impose restrictions on the existence of embeddings between Carnot groups of different ranks. By combining the mollification technique with the Gromov non-embedding strategy, we can relax the classical continuity requirements. 
	
	For Gromov non-embedding theorem, {Haj{\l}asz} gives a simple proof \cite{2025arXiv250311506H}. However, this result requires $f$ is a $\gamma$-Hölder continuous embedding for $\gamma>\frac{1}{2}$ at least. Therefore, we want to know if we can lower the requirements for $\gamma$ through Sobolev property. An easy situation is for contact Sobolev mappings $W^{1,p}(\mathbb{R}^N;G_2)$, since the source group is just an Euclidean space.
	
	\begin{thm}
		Suppose that $G_2$ is a step-$n$ Carnot group. The Lie algebra of $G_2$ is $g_2=g_2^{[1]} \oplus g_2^{[2]} \oplus \dots \oplus g_2^{[n]}$ satisfying $N>dim(g_2^{[1]})$. Then there does not exist a topological embedding $f\in C^{0} \cap W^{1,p}(\Omega;G_2)$ for $p\geq max\left\{N,n+1\right\}$ and $\Omega \subset \mathbb{R}^N$ is a domain. 
	\end{thm}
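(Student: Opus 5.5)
This is Gromov's non-embedding argument, carried out with the Sobolev rank condition replacing Hölder control. The target is: a $W^{1,p}$ map from $\Omega\subset\mathbb{R}^N$ into $G_2$ has horizontal differential of rank at most $r:=\dim(g_2^{[1]})<N$ almost everywhere. A continuous injection from an $N$-dimensional domain cannot have this, so we argue by contradiction: assume $f$ is a topological embedding.

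\textbf{Step 1 (contact structure of the differential).} View $\Omega\subset\mathbb{R}^N$ as an abelian Carnot group of step one, so every coordinate field $\partial_i$ is horizontal. Apply Theorem \ref{thm-approx-higher} and Theorem \ref{thm-vanish}, which are available since $p\ge n+1$ (for $p=n+1$ the estimates from the proof of Theorem \ref{thm-approx} with $q=2$, namely the $L^1$ endpoint, give the borderline case). They yield $\partial_i(f_\varepsilon^*\omega_j)\to 0$ in $L^{p/wt(\omega_j)}_{loc}$ for every contact form $\omega_j$ of weight $\ge 2$. Hence the limiting differential $Df$ has the block form \eqref{diff}, with all higher-layer rows vanishing a.e. The derivative therefore factors through the horizontal layer: a.e. $Df(x)$ takes values in $H_{f(x)}G_2$, which has dimension $r$.

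\textbf{Step 2 (vanishing of top-degree pullbacks).} Take the standard volume form $dV=dx_1\wedge\dots\wedge dx_N$ on the source. For any smooth $N$-form $\omega$ on $G_2$, write it in the left-invariant coframe. Every term contains at least $N-r\ge1$ contact factors, because only $r$ horizontal one-forms are available. Therefore $f_\varepsilon^*\omega$ is a sum of products in which at least one factor is $f_\varepsilon^*\omega_j$ with $wt(\omega_j)\ge2$, and the other factors are bounded in $L^p$ by the horizontal estimates together with the maximal-function bounds.

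Hölder's inequality requires the total exponent to be admissible. With $N$ factors, each in $L^p$ or better, the product lies in $L^{p/N}$. This is exactly where $p\ge N$ is needed: it ensures $L^{p/N}\subset L^1_{loc}$. As in Theorem \ref{thm-Pull-0}, we obtain $\int_\Omega f_\varepsilon^*\omega\wedge\varphi\to 0$ for every test function $\varphi$. In particular $J_f:=\det$ of any $N\times N$ minor of $Df$, computed in a local chart of $G_2$, vanishes a.e.

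\textbf{Step 3 (topological contradiction).} Since $f$ is an embedding, $f(\Omega)$ is an $N$-dimensional topological submanifold. Choose a coordinate projection $P:G_2\to\mathbb{R}^N$ such that $P\circ f$ is open on some small ball $B\subset\Omega$. Such a projection exists after restricting to a chart, because a continuous injection of $B$ into $\mathbb{R}^{\nu_2}$ has some projection with nonzero local degree on a subball; this follows from invariance of domain together with a Čech/degree argument, as in Haj{\l}asz's proof \cite{2025arXiv250311506H}. The map $g=P\circ f\in C^0\cap W^{1,p}(B;\mathbb{R}^N)$ with $p\ge N$ then satisfies the degree formula
\[
\int_{\mathbb{R}^N}\deg(g,B,y)\,\psi(y)\,dy=\int_B \psi(g(x))\,J_g(x)\,dx .
\]
The right-hand side is $0$ by Step 2, while the left-hand side is nonzero for suitable $\psi$ because $g$ has nonzero local degree on a set of positive measure. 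This is a contradiction.

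\textbf{Main obstacle.} The delicate step is Step 3. We need a degree formula valid for continuous $W^{1,N}$ maps, which is classical (Reshetnyak/Šverák), and we need to produce a projection with nonzero degree. For the latter I would first try the Haj{\l}asz argument: $f(\Omega)$ contains a set of positive $N$-dimensional Hausdorff measure, so some coordinate projection has positive-measure image, and then combine this with the area formula. That gives $\int|J_g|>0$ directly under Lusin (N), which $W^{1,p}$ continuous maps enjoy for $p>N$ (and for $p=N$ by Malý–Martio). This second route is probably cleaner than the degree argument.
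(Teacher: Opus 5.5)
\textbf{The gap is in Step 3.} Your contradiction rests on the claim that some coordinate projection $P:G_2\cong\mathbb{R}^{\nu_2}\to\mathbb{R}^N$ satisfies $\deg(P\circ f,B',y)\neq 0$ for some subball $B'$ and some $y\notin P(f(\partial B'))$. This claim is asserted, not proved, and the tools you cite do not deliver it.

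Invariance of domain concerns maps between spaces of equal dimension. It says nothing about $f:B\to\mathbb{R}^{\nu_2}$ with $\nu_2>N$, nor about the non-injective map $P\circ f$. The \v{C}ech argument of \cite{2025arXiv250311506H} does not isolate a single projection either. Writing $d\omega=\sum_{|I|=N}c_I\,dy_I$ in exponential coordinates, it only yields $1=\sum_I\int_B c_I(g(x))\,J_{P_I\circ g}(x)\,dx$ for smooth $g$ uniformly close to $f$. The weights $c_I(g(x))$ depend on all coordinates of $g$, so no individual $\deg(P_I\circ g,\cdot,\cdot)$ can be read off.

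Your fallback route has further problems. A set of positive $\mathcal{H}^N$-measure need not have any coordinate projection of positive $\mathcal{L}^N$-measure unless it is rectifiable (Besicovitch--Federer). Also, Mal\'y--Martio do not give Lusin's condition (N) for arbitrary continuous $W^{1,N}$ maps. Their positive result needs pseudomonotonicity, and they construct continuous $W^{1,N}$ maps violating (N). So the endpoint $p=N$, which the statement allows, is not covered. For $p>N$ projections are unnecessary anyway: the area formula gives $\mathcal{H}^N(f(\Omega))\le\int_\Omega J_Nf\,dx=0$, contradicting the fact that $f(\Omega)$ has topological dimension $N$.

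\textbf{How the paper proceeds.} The paper uses no projection or degree formula; it applies Stokes directly to the Gromov form. Fix $\bar B\subset\Omega$ and take $\omega\in\Omega^{N-1}_c(G_2)$ from \cite[Theorem 8.10]{2025arXiv250311506H} for the embedded sphere $f(\partial B)$. Then $1=\int_{\partial B}f_\varepsilon^*\omega=\int_B f_\varepsilon^*d\omega$ for all small $\varepsilon$, and it remains to show $\int_B f_\varepsilon^*d\omega\to 0$.

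Theorem \ref{thm-vanish} alone does not give this limit, and this is exactly where your Step 2 miscounts. Step 1 places the contact factors only in $L^{p/wt(\omega_j)}$ with $wt(\omega_j)\ge 2$, so H\"older would need at least $p\ge N+1$, not $p\ge N$.

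The missing observation is that every exponential coordinate of $f$ lies in $W^{1,p}_{loc}$, because $f$ is continuous and hence locally bounded. Indeed, the weak contact equation writes $\partial_i f=\sum_j\partial_i f^{x_j}\,X_j(f)$, where the horizontal fields $X_j$ have polynomial coefficients in the lower-layer coordinates. You need this anyway to know $P\circ f\in W^{1,N}$ when $P$ keeps a higher-layer coordinate.

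With it, the argument closes:
\begin{itemize}
\item $Df_\varepsilon\to Df$ in $L^p_{loc}\subset L^N_{loc}$, so each $N\times N$ minor converges in $L^1(B)$;
\item $c_I(f_\varepsilon)\to c_I(f)$ uniformly on $\bar B$;
\item hence $\int_B f_\varepsilon^*d\omega\to\sum_I\int_B c_I(f)\,J_{P_I\circ f}\,dx$, which is $0$ because $\operatorname{rank}Df\le\dim g_2^{[1]}<N$ a.e. by your Step 1.
\end{itemize}
That is the contradiction. It uses only continuity, $p\ge N$, and the weak contact equation.
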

	\begin{proof}
		The easiest case is for $p>N$. Because of Sobolev embedding theorem on Carnot group, $f$ is a Hölder continuous mapping and is Pansu differentiable almost everywhere. This would give a strong restriction by Lie algebra homomorphism.
		
		Since $dim(g_1^{[2]})>dim(g_2^{[2]})$, $D_Pf$ is not full-rank which means any $f\in C^0 \cap W^{1,p}(G_1;G_2)$ is not an embedding. 
		
		For $n+1<p\leq N$, $f$ is not certainly Pansu differentiable almost everywhere. We can not rely on strong restriction of Lie algebra homomorphism to prove, but as a Sobolev contact mapping $f$ would still preserve stratified structure by theorem \ref{thm-vanish}.
		
		It suffices to show that there does not exist an topological embedding $f\in C^0 \cap W^{1,p}(\mathbb{B}^{N};G_2)$. We will prove by contradiction.
		
		Suppose that $f$ is an embedding, then $f|_{\mathbb{S}^{N-1}}$ is also a continuous embedding. From theorem 8.10 in \cite{2025arXiv250311506H}, there is $\omega \in \Omega_c^{N-1}(G_2)$ such that $d\omega \in \Omega_c^{N}(G_2)$ and 
		\begin{equation*}
			\int_{\mathbb{S}^{N-1}}f^*\omega =1.
		\end{equation*}
		
		However, by approximation of mollification and stokes formula
		\begin{equation*}
			\int_{\mathbb{S}^{N-1}}f^*\omega =\int_{\mathbb{S}^{N-1}}f_\varepsilon^*\omega= \int_{\mathbb{B}^{N}}df_\varepsilon^*\omega= \int_{\mathbb{B}^{N}}f_\varepsilon^*(d\omega)
		\end{equation*} 
		By theorem \ref{thm-vanish} and $N>dim(g_2^{[1]})$, $f_\varepsilon^*(d\omega)$ will inevitably vanish in $\left(N-dim(g_2^{[1]})\right)$ directions while the other directions would not expand.
		\begin{equation*}
			\int_{\mathbb{B}^{N}}f_\varepsilon^*(d\omega) \to 0 
		\end{equation*}
		A contradiction.
		
	\end{proof}
	
	If we want to replace the source space from Euclidean space $\mathbb{R}^{N}$ to a step-$2$ Carnot group $G_1$, it is necessary to consider a more restricted condition where the embeddings are Hölder continuous. The reason is that there is only control over the horizontal gradient for contact Sobolev mappings. If we want to estimate the speed of expansion in the directions of higher layers, we are supposed to find a stronger condition, like Hölder continuity.
	
	By \eqref{metric-compar}, we know that the $\gamma$-Hölder continuity $C_E^{0,\gamma}$ in the sense of Riemannian metric is a more general case respect to $\gamma$-Hölder continuity $C^{0,\gamma}$ in the sense of Sub-Riemannian metric. Thus, in order to get a more general result, we just study the $C_E^{0,\gamma}$ case.
	
	The following lemma is from \cite{2025arXiv250311506H} lemma 7.13. 
	\begin{lem}\label{lem-expan}
		Let  $f \in C^{0, \gamma}\left(\Omega ; \mathbb{R}^{d}\right)$ , where  $\Omega \subset \mathbb{R}^{m}$  is open, $ B\left(x_{o}, 2 r\right) \subset \Omega$ , and  $\gamma \in(0,1] $. If  $\kappa \in \Omega^{k} L^{\infty}\left(\mathbb{R}^{d}\right) $, then
		\begin{equation}
			\left\|f_{\varepsilon}^{*} \kappa\right\|_{L^{\infty}\left(B^{m}\left(x_{o}, r\right)\right)} \lesssim\|\kappa\|_{\infty}[f]_{\gamma, \varepsilon, B^{m}\left(x_{o}, 2 r\right)}^{k} \varepsilon^{-k(1-\gamma)} \quad \text { for all } 0<\varepsilon<r,
		\end{equation}
		where the constant in the inequality depends on  $m, d, k $, and  $\eta$  only.
	\end{lem}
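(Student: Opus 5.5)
Fix $p\in B^m(x_o,r)$ and $0<\varepsilon<r$. Write $f_\varepsilon=f*\eta_\varepsilon$ with the Euclidean mollifier $\eta_\varepsilon(x)=\varepsilon^{-m}\eta(x/\varepsilon)$. Here $[f]_{\gamma,\varepsilon,B^m(x_o,2r)}$ is the Hölder seminorm restricted to pairs at distance at most $\varepsilon$:
\[
\sup\{|f(x)-f(y)|/|x-y|^\gamma : x,y\in B^m(x_o,2r),\ |x-y|\le\varepsilon\}.
\]
The estimate is pointwise, so I bound $|(f_\varepsilon^*\kappa)(p)|$ and take the supremum at the end. Expanding $\kappa=\sum_I \kappa_I\,dy^{i_1}\wedge\dots\wedge dy^{i_k}$, the pullback is
\[
(f_\varepsilon^*\kappa)(p)=\sum_I \kappa_I(f_\varepsilon(p))\,df_\varepsilon^{i_1}(p)\wedge\dots\wedge df_\varepsilon^{i_k}(p).
\]
Hence $|(f_\varepsilon^*\kappa)(p)|\lesssim\|\kappa\|_\infty\,|Df_\varepsilon(p)|^k$, with the constant depending only on $m,d,k$ through the number of multi-indices and the norm of a wedge of $k$ covectors. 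The whole problem therefore reduces to the first-order bound
\[
|Df_\varepsilon(p)|\lesssim [f]_{\gamma,\varepsilon,B^m(x_o,2r)}\,\varepsilon^{\gamma-1}.
\]

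For the first-order bound I use the standard cancellation trick. Differentiating under the integral gives $\partial_j f_\varepsilon(p)=\varepsilon^{-m-1}\int_{B(0,\varepsilon)} f(p-z)\,(\partial_j\eta)(z/\varepsilon)\,dz$. Since $\int\partial_j\eta=0$, the same identity holds with $f(p-z)$ replaced by $f(p-z)-f(p)$, exactly as in the proof of Theorem~\ref{thm-approx}. Because $p\in B(x_o,r)$ and $\varepsilon<r$, both $p$ and $p-z$ lie in $B(x_o,2r)$ with $|z|\le\varepsilon$. This gives $|f(p-z)-f(p)|\le[f]_{\gamma,\varepsilon,B^m(x_o,2r)}\,\varepsilon^\gamma$, and therefore
\[
|\partial_jf_\varepsilon(p)|\le \varepsilon^{-m-1}\,\|\nabla\eta\|_\infty\,|B(0,\varepsilon)|\,[f]_{\gamma,\varepsilon,B^m(x_o,2r)}\,\varepsilon^\gamma\lesssim [f]_{\gamma,\varepsilon,B^m(x_o,2r)}\,\varepsilon^{\gamma-1}.
\]
The implicit constant depends only on $m$ and $\eta$.

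Raising this to the $k$-th power and inserting it into the reduction above gives the stated bound, uniformly in $p\in B^m(x_o,r)$. I expect no genuine obstacle. The only points to check are that the shifted points stay inside $B^m(x_o,2r)$, which is what the hypotheses $B(x_o,2r)\subset\Omega$ and $\varepsilon<r$ ensure, and that $\kappa$ is merely $L^\infty$. The latter causes no trouble, since $\kappa_I$ is only evaluated at $f_\varepsilon(p)$ and bounded by $\|\kappa\|_\infty$. If $\kappa$ is defined only a.e., I would interpret the bound for a.e.\ $p$, or take a Borel representative.
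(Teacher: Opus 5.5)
Your argument is correct. The paper does not prove this lemma itself; it quotes it from \cite{2025arXiv250311506H}, and your route is the standard proof of the estimate there: the pointwise bound $|f_\varepsilon^*\kappa|\lesssim\|\kappa\|_\infty|Df_\varepsilon|^k$ combined with the cancellation estimate $|Df_\varepsilon|\lesssim[f]_{\gamma,\varepsilon,B^m(x_o,2r)}\,\varepsilon^{\gamma-1}$.

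One small correction to your closing remark: reading the bound ``for a.e.\ $p$'' does not rescue a merely a.e.-defined $\kappa$. When $m<d$, the set $f_\varepsilon(B^m(x_o,r))$ is Lebesgue-null in $\mathbb{R}^d$, so $\kappa_I\circ f_\varepsilon$ is not determined by the class of $\kappa_I$. The lemma should therefore be read with Borel coefficients bounded everywhere by $\|\kappa\|_\infty$, which is your second option.
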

	This lemma would help us to estimate the speed of expansion of differential in the process of approximation.
	\begin{thm}\label{thm-non-embed}
		Suppose that $G_1$ is a step-$2$ Carnot group and $G_2$ is a step-$n$ Carnot group, the homogeneous dimension of $G_1$ is $Q$. The Lie algebra $g_1=g_1^{[1]} \oplus g_1^{[2]}$ and $g_2=g_2^{[1]} \oplus g_2^{[2]} \oplus \dots \oplus g_2^{[n]}$ satisfy $dim(g_1^{[1]})=dim(g_2^{[1]})$ and $dim(g_1^{[2]})>dim(g_2^{[2]})$. Then there does not exist a topological embedding $f\in C_E^{0,\gamma} \cap W^{1,p}(\Omega;G_2)$ for $p>max\left\{dim(g_1^{[1]}),n+1\right\} $, $\gamma > 2-\frac{dim(g_1^{[2]})}{dim(g_2^{[2]})}$  and $\Omega \subset G_1$ is a domain, where the $C_E^{0,\gamma}$ is the space of $\gamma$-Hölder continuous mappings in the sense of Riemannian metric. 
	\end{thm}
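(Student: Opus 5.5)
We argue by contradiction, following the Gromov--Haj{\l}asz strategy used for the Euclidean-source theorem just above, with Lemma \ref{lem-expan} as the extra input. Suppose $f\in C_E^{0,\gamma}\cap W^{1,p}(\Omega;G_2)$ is a topological embedding. Set $N=\dim(g_1^{[1]})+\dim(g_1^{[2]})$ and $k_1=\dim(g_1^{[1]})=\dim(g_2^{[1]})$, $s_1=\dim(g_1^{[2]})$, $s_2=\dim(g_2^{[2]})$. Choose a small Riemannian ball $\mathbb{B}^N\subset\Omega$ in exponential coordinates. Then $f|_{\partial\mathbb{B}^N}$ is a continuous embedding of $\mathbb{S}^{N-1}$. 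By Theorem 8.10 of \cite{2025arXiv250311506H} we obtain $\omega\in\Omega_c^{N-1}(G_2)$ with $d\omega\in\Omega^N_c(G_2)$ and $\int_{\mathbb{S}^{N-1}}f^*\omega=1$. Since $f_\varepsilon\to f$ uniformly, Stokes' formula gives
\begin{equation*}
1=\lim_{\varepsilon\to0}\int_{\mathbb{B}^N}f_\varepsilon^*(d\omega).
\end{equation*}
The aim is to show that the right-hand side tends to $0$.

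To do this, expand $d\omega$ in the left-invariant coframe of $G_2$ and evaluate $f_\varepsilon^*(d\omega)$ on the left-invariant frame $X_1,\dots,X_{k_1},Z_1,\dots,Z_{s_1}$ of $G_1$. Each coefficient is a sum of determinants whose entries are $X_a(f_\varepsilon^*\theta)$ or $Z_b(f_\varepsilon^*\theta)$, where $\theta$ runs over the coframe of $G_2$.

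Now use the stratification of the differential obtained in Theorem \ref{thm-vanish}, i.e. \eqref{diff}. A horizontal field applied to a contact form of layer $j\ge2$ tends to $0$ in $L^{p/j}_{loc}$. Since $p>n+1$, Theorem \ref{thm-approx-higher} keeps the horizontal block $X_a(f_\varepsilon^*dx_i)$ bounded in $L^p$. Because $\dim(g_1^{[1]})=\dim(g_2^{[1]})$, in every nonvanishing term the $k_1$ horizontal fields must pair either with the $k_1$ horizontal coframe elements of $G_2$ or with a contact form, the latter giving a factor that tends to $0$. In the first case the $s_1$ second-layer fields $Z_b$ have to pair with $N-k_1=s_1$ coframe elements from layers $\ge2$. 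There are only $s_2<s_1$ forms of layer $2$, so each surviving term contains at least $s_1-s_2$ factors $Z_b(f_\varepsilon^*\theta)$ with $\theta$ of layer $\ge3$, together with at most $s_2$ factors pairing $Z$ with layer $2$.

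The $Z$-factors are estimated with Lemma \ref{lem-expan}, applied in the Euclidean coordinates of $G_1$ using Riemannian Hölder continuity. The lemma gives $|Z_b(f_\varepsilon^*\theta)|\lesssim\varepsilon^{-(1-\gamma)}$ uniformly on compacts. The $s_2$ layer-$2$ pairings therefore cost at most $\varepsilon^{-s_2(1-\gamma)}$. The $s_1-s_2$ pairings with layers $\ge3$ should each gain. The intended mechanism is the one in the proof of Theorem \ref{thm-vanish}: write $Z=[X,X']$ and $d_0\theta=\sum dx_i\wedge\theta'$, so each such factor is a product of a horizontal factor and an $X'(f_\varepsilon^*\theta')$ factor. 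That last factor decays like $\varepsilon$ times maximal functions, by the computations of Theorems \ref{thm-approx} and \ref{thm-approx-higher}.

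Combining these bounds with Hölder's inequality, the surviving terms should be $O\bigl(\varepsilon^{\,(s_1-s_2)-s_2(1-\gamma)}\bigr)$ in $L^1(\mathbb{B}^N)$, up to norms of $D_Hf$ and $f$. This tends to $0$ exactly when $\gamma>2-s_1/s_2$, which matches the hypothesis. Terms that contain a horizontal-field/contact-form pairing tend to $0$: that factor goes to $0$ in $L^{p/j}$, the horizontal factors are bounded in $L^p$ (here $p>k_1$ is used to make the Hölder exponents add up), and the $Z$-factors are controlled in $L^\infty$ by Lemma \ref{lem-expan}. Together these give $\int_{\mathbb{B}^N}f_\varepsilon^*(d\omega)\to0$, contradicting the value $1$.

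The main obstacle is the exponent bookkeeping in this last step. One must show that the gain of order $\varepsilon^{1}$ per second-layer field hitting a layer-$\ge3$ form genuinely holds pointwise or in $L^{q}$ with $q$ compatible with the $L^\infty$ bounds. One must also check that the product of $k_1$ horizontal factors in $L^p$ is integrable, which needs $p\ge k_1$. I would first try to prove that gain via the $d_0$ identity and the maximal-function estimates of Theorem \ref{thm-approx}. If that fails, the fallback is to use the Riemannian Hölder control on both factors and accept a weaker threshold for $\gamma$.
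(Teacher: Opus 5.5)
Your route is the same as the paper's. Both argue by contradiction via Theorem 8.10 of \cite{2025arXiv250311506H}, apply Stokes to $\int_{\mathbb{B}^N}f_\varepsilon^*(d\omega)$, use Lemma \ref{lem-expan} to charge $\varepsilon^{-(1-\gamma)}$ to each of the $s_2$ second-layer/layer-$2$ pairings, and take a gain of $\varepsilon$ on each of the remaining $s_1-s_2$ pairings. This reproduces the paper's bound $\varepsilon^{-d_2(1-\gamma)}\varepsilon^{d_1-d_2}$.

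The step you flag as the main obstacle is a genuine gap. The paper does not fill it either; it only asserts that $f_\varepsilon^*(d\omega)$ ``vanishes in $d_1-d_2$ directions''. Your proposed mechanism does not supply the gain, for the following reasons.
\begin{itemize}
\item The identity $f_\varepsilon^*\theta([X,X'])=(f_\varepsilon^*d_0\theta)(X,X')$ used in the proof of Theorem \ref{thm-vanish} is false for the non-homomorphic maps $f_\varepsilon$. The invariant formula for $d$ adds the terms $X\bigl(f_\varepsilon^*\theta(X')\bigr)-X'\bigl(f_\varepsilon^*\theta(X)\bigr)$, which involve second derivatives of $f_\varepsilon$.
\item Theorem \ref{thm-vanish} gives no rate at all.
\item A direct estimate in the style of Theorem \ref{thm-approx-higher} (using $|\varphi_j|\lesssim d_K^{\,j}$ and the $(1,j)$-Poincar\'e inequality) gives at best $\varepsilon^{j-2}\mathcal{M}(|D_Hf|^j)$ for a second-layer field hitting a layer-$j$ form. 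So the gain lives in $L^{p/j}$ with $j\ge 3$, not in $L^\infty$. Multiplying $s_1-s_2$ such factors with the $k_1$ horizontal factors in $L^p$ and landing in $L^1$ by H\"older's inequality requires $p\ge k_1+3(s_1-s_2)$, which is not assumed. Interpolating with the $L^\infty$ bound instead degrades the exponent and changes the threshold.
\item The H\"older hypothesis cannot produce the gain either. The layer-$j$ coordinate of $f(b)^{-1}f(a)$ is only $O(|f(a)-f(b)|)$ in Euclidean terms, so for these factors Lemma \ref{lem-expan} again gives $\varepsilon^{-(1-\gamma)}$, which is a loss rather than a gain.
\end{itemize}

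Two further steps fail as written. First, in the terms where a horizontal field hits a contact form, you combine a factor tending to $0$ in $L^{p/j}$ (with no rate) with $s_1$ factors each of size $\varepsilon^{-(1-\gamma)}$ in $L^\infty$. That product is not shown to tend to $0$, so you need explicit rates there as well. Second, you mix two different mollifications. Lemma \ref{lem-expan} concerns Euclidean mollification, whereas Theorems \ref{thm-approx-higher} and \ref{thm-vanish} use the group convolution $f*\eta_\varepsilon$ with $\eta_\varepsilon$ supported in $B(e,\varepsilon)$. For the group convolution, a central second-layer field $Z$ sees vertical Euclidean scale $\varepsilon^2$. The H\"older bound is then only $|f_\varepsilon^*\theta(Z)|\lesssim\varepsilon^{-2(1-\gamma)}$, not $\varepsilon^{-(1-\gamma)}$. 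Carried through consistently, your count would read $\varepsilon^{(s_1-s_2)-2s_2(1-\gamma)}$, which gives a different threshold for $\gamma$.

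In short, the proposal reproduces the paper's argument, including its unproved central estimate, but it does not prove the statement.
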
 
	\begin{proof}
		The easiest case is for $p>Q$. Because of Sobolev embedding theorem on Carnot group, $f$ is a Hölder continuous mapping and is Pansu differentiable almost everywhere. This would give a strong restriction by Lie algebra homomorphism.
		
		Thus, $D_Pf$ is not a Lie algebra isomorphism almost everywhere, since $dim(g_1^{[2]})>dim(g_2^{[2]})$. Then $D_Pf$ is not full-rank which means any $f\in C^0 \cap W^{1,p}(G_1;G_2)$ is not an embedding. 
		
		For $n+1<p\leq Q$, $f$ is not certainly Pansu differentiable almost everywhere. We can not rely on strong restriction of Lie algebra homomorphism to prove, but as a Sobolev contact mapping $f$ would still preserve stratified structure by theorem \ref{thm-vanish}.
		
		It suffices to show that there does not exist an topological embedding $f\in C_E^{0,\gamma} \cap W^{1,p}(\mathbb{B}^{N};G_2)$ where the $N$ is topological dimension of $G_1$. We will prove by contradiction.
		
		Suppose that $f$ is an embedding, then $f|_{\mathbb{S}^{N-1}}$ is also a continuous embedding. From theorem 8.10 in \cite{2025arXiv250311506H}, there is $\omega \in \Omega_c^{N-1}(G_2)$ such that $d\omega \in \Omega_c^{N}(G_2)$ and 
		\begin{equation*}
			\int_{\mathbb{S}^{N-1}}f^*\omega =1.
		\end{equation*}
		
		However, by approximation of mollification and stokes formula
		\begin{equation*}
			\int_{\mathbb{S}^{N-1}}f^*\omega =\int_{\mathbb{S}^{N-1}}f_\varepsilon^*\omega= \int_{\mathbb{B}^{N}}df_\varepsilon^*\omega= \int_{\mathbb{B}^{N}}f_\varepsilon^*(d\omega)
		\end{equation*} 
		Since $f_\varepsilon^*(d\omega)$ is a volume form on $G_1$ and theorem \ref{thm-vanish}, from  $dim(g_1^{[1]})=dim(g_2^{[1]})$ and $dim(g_1^{[2]})>dim(g_2^{[2]})$, $f_\varepsilon^*(d\omega)$ will inevitably vanish in $\left(dim(g_1^{[2]})-dim(g_2^{[2]})\right)$ directions.
		Due to $f$ is $\gamma$-Hölder continuous in the sense of Riemannian metric and Lemma \ref*{lem-expan},
		\begin{equation*}
			 \int_{\mathbb{B}^{N}}f_\varepsilon^*(d\omega) \lesssim \varepsilon^{-d_2(1-\gamma)} \cdot \varepsilon^{d_1-d_2} \to 0 
		\end{equation*}
		since $\gamma > 2-\frac{d_1}{d_2}$, where $d_1=dim(g_1^{[2]})$ and $d_2=dim(g_2^{[2]})$.
		A contradiction.
	\end{proof}
	Let's recall \eqref{metric-compar}, locally 
	\begin{equation*}
		C^{-1}|p-q| \leq d_{c c}(p, q) \leq C|p-q|^{1 / 2} \quad 
	\end{equation*}
	for step-$2$ Carnot group.
	Thus, the condition $C_E^{0,\gamma}$ can be replaced by $C^{0,\gamma}$ with respect to Sub-Riemannian metric.
	
	\begin{thm}
		Suppose that $G_1$ is a step-$2$ Carnot group and $G_2$ is a step-$n$ Carnot group, the homogeneous dimension of $G_1$ is $Q$. The Lie algebra $g_1=g_1^{[1]} \oplus g_1^{[2]}$ and $g_2=g_2^{[1]} \oplus g_2^{[2]} \oplus \dots \oplus g_2^{[n]}$ satisfy $dim(g_1^{[1]})=dim(g_2^{[1]})$ and $dim(g_1^{[2]})>dim(g_2^{[2]})$. Then there does not exist a topological embedding $f\in C^{0,\gamma} \cap W^{1,p}(\Omega;G_2)$ for $p>max\left\{dim(g_1^{[1]}),n+1\right\} $, $\gamma > 4-\frac{2dim(g_1^{[2]})}{dim(g_2^{[2]})}$  and $\Omega \subset G_1$ is a domain. 
	\end{thm}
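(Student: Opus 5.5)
The plan is to reduce this statement to Theorem \ref{thm-non-embed} by converting sub-Riemannian H\"older continuity into Riemannian (Euclidean) H\"older continuity through the local comparison \eqref{metric-compar}. After that conversion, the numerical condition on $\gamma$ should transform exactly into the condition of Theorem \ref{thm-non-embed}.

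First I fix a compact set $K\subset\Omega$. On $K$ the source $G_1$ has step $2$, so \eqref{metric-compar} gives $d_{cc}(x,y)\le C|x-y|^{1/2}$. I would take the target estimate in the form $|f(x)-f(y)|\lesssim d_{cc}(f(x),f(y))$, using the left inequality of \eqref{metric-compar} on the compact image $f(K)$. Combining the two with $f\in C^{0,\gamma}$ yields
\[
|f(x)-f(y)|\lesssim d_{cc}(f(x),f(y))\lesssim d_{cc}(x,y)^{\gamma}\lesssim |x-y|^{\gamma/2},
\]
so $f\in C_E^{0,\gamma/2}$ locally. Since both the embedding property and the Sobolev class are local, restricting to a ball $\mathbb{B}^N\Subset\Omega$ loses nothing, exactly as in the proof of Theorem \ref{thm-non-embed}.

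Next I check the exponent. The hypothesis $\gamma>4-\frac{2\dim(g_1^{[2]})}{\dim(g_2^{[2]})}$ is equivalent to $\gamma/2>2-\frac{\dim(g_1^{[2]})}{\dim(g_2^{[2]})}$, which is precisely the requirement of Theorem \ref{thm-non-embed} with $\gamma/2$ in place of $\gamma$. The remaining hypotheses are unchanged: $\dim(g_1^{[1]})=\dim(g_2^{[1]})$, $\dim(g_1^{[2]})>\dim(g_2^{[2]})$ and $p>\max\{\dim(g_1^{[1]}),n+1\}$. Applying Theorem \ref{thm-non-embed} then rules out a topological embedding $f\in C_E^{0,\gamma/2}\cap W^{1,p}$, which is a contradiction. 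Alternatively, one can rerun that proof directly, feeding Lemma \ref{lem-expan} the exponent $\gamma/2$. This gives $\int_{\mathbb{B}^N}f_\varepsilon^*(d\omega)\lesssim\varepsilon^{-d_2(1-\gamma/2)}\varepsilon^{d_1-d_2}\to0$, which contradicts $\int_{\mathbb{S}^{N-1}}f^*\omega=1$.

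The main point needing care is that the comparison uses the correct inequality on each side. On the source I need $d_{cc}\lesssim|\cdot|^{1/2}$, which holds with exponent $1/2$ because $G_1$ has step $2$. On the target I need $|\cdot|\lesssim d_{cc}$, which holds on compacts for any step, so the step $n$ of $G_2$ does not degrade the exponent. I would also verify that the constants depend only on the compact sets, so that the local H\"older seminorm $[f]_{\gamma/2,\varepsilon}$ entering Lemma \ref{lem-expan} is finite on $B(x_o,2r)$. This requires $f(\overline{B(x_o,2r)})$ to be bounded, which follows from the continuity of $f$.
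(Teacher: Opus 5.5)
Your proposal is correct and is essentially the paper's own argument. The paper likewise combines the target-side inequality $|f(p)-f(q)|\lesssim d_{cc}(f(p),f(q))$ with the step-$2$ source-side inequality $d_{cc}(p,q)\lesssim|p-q|^{1/2}$ from \eqref{metric-compar}, deduces $C^{0,\gamma}_{loc}\hookrightarrow C^{0,\gamma/2}_{E,loc}$, and then invokes Theorem~\ref{thm-non-embed} with exponent $\gamma/2$, using that $\gamma>4-\frac{2\dim(g_1^{[2]})}{\dim(g_2^{[2]})}$ is exactly the condition $\gamma/2>2-\frac{\dim(g_1^{[2]})}{\dim(g_2^{[2]})}$. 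Your remark that $f$ must be bounded on compacts, which is needed to apply the target-side comparison, is a detail the paper leaves implicit.
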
 
	\begin{proof}
		By \eqref{metric-compar},
		\begin{equation*}
			\frac{|f(p)-f(q)|}{C^2|p-q|^\frac{\gamma}{2}}\leq
			\frac{d_{cc}(f(p),f(q))}{C|p-q|^\frac{\gamma}{2}}\leq
			\frac{d_{cc}(f(p),f(q))}{d_{cc}(p,q)^\gamma}. 
		\end{equation*}
		Thus, $C_{loc}^{0,\gamma} \hookrightarrow C_{E,loc}^{0,\frac{\gamma}{2}}$. With theorem \ref{thm-non-embed}, there does not exist a topological embedding $f\in C^{0,\gamma} \cap W^{1,p}(\Omega;G_2)$ for $\gamma > 4-\frac{2dim(g_1^{[2]})}{dim(g_2^{[2]})}$.

	\end{proof}
	
	If we drop the assumption that $f$ is continuous, the $N-1$ dimensional Hausdorff measure of the image of hypersurface is zero.
	\begin{thm}
		Suppose that $G_2$ is a step-$n$ Carnot group. The Lie algebra of $G_2$ is $g_2=g_2^{[1]} \oplus g_2^{[2]} \oplus \dots \oplus g_2^{[n]}$ satisfying $N>dim(g_2^{[1]})$. Suppose that $f\in  W^{1,p}(\mathbb{B}^N;G_2)$ for $p\geq max\left\{N,n+1\right\} $ and $\mathbb{B}^N \subset \mathbb{R}^N$ is a domain. Then $\mathcal{H}^{N-1}\left(f(\mathbb{S}^{N-1}(r))\right)=0$ for $N-1$ dimensional sphere with radius $r$ for $r>0$.
	\end{thm}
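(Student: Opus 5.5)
The plan is to show that the image of each sphere has vanishing $(N-1)$-dimensional Hausdorff measure by combining the horizontal rank bound of Theorem~\ref{thm-vanish} with an area-type estimate on the sphere.

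\textbf{Step 1: restrict to a.e. sphere.} Since $p\geq N$, Fubini in polar coordinates gives, for almost every $r$, that $f|_{\mathbb{S}^{N-1}(r)}\in W^{1,p}(\mathbb{S}^{N-1}(r);G_2)$ with $p\geq N>N-1$. On such a sphere the restriction has a continuous representative. It satisfies the Lusin $(N)$ property with respect to $\mathcal{H}^{N-1}$, so the area formula is available:
\[
\mathcal{H}^{N-1}\bigl(f(\mathbb{S}^{N-1}(r))\bigr)\le \int_{\mathbb{S}^{N-1}(r)} J_{N-1}f\,d\mathcal{H}^{N-1},
\]
where $J_{N-1}f$ is the $(N-1)$-Jacobian computed with the sub-Riemannian (or Euclidean, then compared) structure on the target.

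\textbf{Step 2: rank bound.} Theorem~\ref{thm-vanish}, applied with $\mathbb{R}^N$ as source, says the components $X(f_\varepsilon^*\omega_j)$ for higher-layer contact forms $\omega_j$ tend to zero in $L^{p/wt(\omega_j)}_{loc}$. Passing to the limit, the weak differential of $f$ takes values a.e. in the horizontal bundle of $G_2$. The horizontal bundle has rank $\dim g_2^{[1]}$.

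\textbf{Step 3: conclude.} Note that $N-1\geq \dim g_2^{[1]}$ because $N>\dim g_2^{[1]}$. I will first treat the case $N-1>\dim g_2^{[1]}$, where the tangential differential on the sphere has rank at most $\dim g_2^{[1]}<N-1$. Then $J_{N-1}f=0$ a.e., and Step~1 yields the claim for a.e. $r$. In the borderline case $N-1=\dim g_2^{[1]}$ the rank bound alone does not kill the Jacobian. There I would instead measure $\mathcal{H}^{N-1}$ with respect to $d_{cc}$ and use that horizontal $(N-1)$-dimensional pieces cannot have positive measure when the horizontal distribution is non-integrable. This again reduces to the vanishing of the pulled-back higher-layer components, via the pullback Theorem~\ref{thm-Pull-0} applied to $d\omega_j$.

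\textbf{Main obstacle.} Passing from a.e. $r$ to every $r>0$ is the hard part. I would try to argue via the $p\geq N$ continuity and Lusin $(N)$ property of $f$ on the whole ball, restricting to each sphere by approximation with nearby good spheres. I expect this is where the argument may only yield the a.e. $r$ statement.
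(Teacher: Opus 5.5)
\textbf{Gap 1: every $r$.} Once the borderline case is repaired (see below), your argument proves $\mathcal{H}^{N-1}\bigl(f(\mathbb{S}^{N-1}(r))\bigr)=0$ only for almost every $r$. The step you flag as the main obstacle is a genuine gap, and the proposed approximation by nearby spheres cannot close it.

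\begin{itemize}
\item A fixed sphere is $\mathcal{L}^N$-null, so the a.e.\ rank bound carries no information on it.
\item On that sphere, $f$ is only a trace of fractional order $1-1/p$. There is no tangential derivative, hence no Jacobian and no area formula.
\item If $f$ is continuous and $r_k\to r$ are good radii, then $f(\mathbb{S}^{N-1}(r_k))\to f(\mathbb{S}^{N-1}(r))$ in Hausdorff distance. But $\mathcal{H}^{N-1}$ is not upper semicontinuous under Hausdorff convergence: finite grids converge to a square.
\item The Lusin $(N)$ property of $f$ on the ball for $p>N$ concerns $\mathcal{L}^N$-null sets and $\mathcal{H}^N$. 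It says nothing in dimension $N-1$.
\item For $p=N$ the claim for a fixed $r$ is not even well posed. There is no continuous representative, and redefining $f$ on the single null sphere $\mathbb{S}^{N-1}(r)$ changes its image arbitrarily without changing the Sobolev class.
\end{itemize}

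The paper does not supply this step either. Its proof does not go through a.e.\ spheres. It makes a global computation (polar coordinates, Stokes, $\int_{B(1)}f^*(dV)=0$) for the unit sphere and then invokes ``the same method'' for every $r$. However, that computation pulls back $d\sigma_1$, which is a form on the \emph{source}. It also infers vanishing of a Hausdorff measure from vanishing of a signed integral. So it establishes neither the every-$r$ nor even the a.e.-$r$ statement. Your restriction-plus-area-formula route is the sound one, but what it actually yields is the a.e.\ $r$ version.

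\textbf{Gap 2: the borderline case.} In the case $N-1=\dim g_2^{[1]}$, your appeal to $d_{cc}$-Hausdorff measure and non-integrability is not yet an argument; it is essentially a pure-unrectifiability statement you would still have to prove. The missing idea is isotropy, and it removes the case split altogether.

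\begin{itemize}
\item Since $f^*\omega_1^k=0$ a.e.\ for the second-layer contact forms and $p\ge 3$, take the distributional exterior derivative. This is legitimate because $d(u\,dv)=du\wedge dv$ for $u,v\in W^{1,2}_{loc}$. It gives $f^*(d\omega_1^k)=-\sum_{i<j}\alpha_k^{ij}\,df^{A_i}\wedge df^{A_j}=0$ a.e.
\item Hence at a.e.\ point the image $W\subset g_2^{[1]}$ of the horizontal differential satisfies $[W,W]=0$.
\item Since $G_2$ has step $n\ge 2$, $W\neq g_2^{[1]}$. The derivatives of the higher coordinates are determined by the horizontal ones through the contact equations, so $\operatorname{rank}Df\le \dim g_2^{[1]}-1\le N-2$.
\item Therefore $J_{N-1}\bigl(f|_{\mathbb{S}^{N-1}(r)}\bigr)=0$ on a.e.\ sphere in all cases, with the Euclidean measure on the target.
\end{itemize}

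Non-integrability is genuinely needed here. For $n=1$ the projection $\mathbb{R}^{k+1}\to\mathbb{R}^k$ maps every sphere in $\mathbb{B}^{k+1}$ onto a $k$-dimensional ball, so the statement fails.
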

	\begin{proof}
		For convenience, we can assume the center of $\mathbb{S}^{N-1}(r)$ is original point. Taking spherical polar coordinates, the volume form on $\mathbb{B}^N$ is 
		\begin{equation*}
			d\sigma_r \wedge dr=r^{N-1}d\sigma_1 \wedge dr,
		\end{equation*} 
		where the $d\sigma_1$ is the area measure on $\mathbb{S}(1)$.
		with it,
		\begin{equation}
			\begin{aligned}
				\int_{B(e,1)} f^*(d\sigma_1) \wedge r^{N-1}dr&=
				\frac{1}{N}\int_{B(e,1)} f^*(d\sigma_1) \wedge d(r^N)\\
				&= \frac{1}{N}\int_{B(e,1)} df^*(d\sigma_1) \wedge r^N\\
				&= \frac{1}{N}\int_{B(e,1)} r^Nf^*(dV)  \\
				&\leq \frac{1}{N}\sum_{i=0}^{\infty}\int_{B(\frac{1}{2^i})\backslash B(\frac{1}{2^{i+1}})} r^Nf^*(dV)  \\
				&\leq \frac{1}{N}\sum_{i=0}^{\infty}\frac{1}{2^{iN}}\int_{B(\frac{1}{2^i})\backslash B(\frac{1}{2^{i+1}})} f^*(dV)  \\
				&\leq \frac{1}{N}\sum_{i=0}^{\infty}\frac{1}{2^{iN}}\int_{B(1)}f^*(dV)  \\
				&= \frac{1}{N(1-2^N)}\int_{B(1)}f^*(dV)  \\
				&=0.
			\end{aligned}
		\end{equation}
		Then $\mathcal{H}^{N-1}\left(f(\mathbb{S}^{N-1}(1))\right)=0$.
		With the same method, we can prove $\mathcal{H}^{N-1}\left(f(\mathbb{S}^{N-1}(r))\right)=0$ for any $r>0$. 
	\end{proof}
	
	Not only the Sobolev contact mappings preserve stratified structures, there is a more clear description for Hölder continuous mappings. 
	We can estimate the Hausdorff dimension of the image of $\Omega \subset \mathbb{R}^m$ under mappings in $C^{0, \gamma}$.
	For $f \in C^{0, \gamma}\left(\Omega ; G\right)$, it is easy to know $\mathcal{H}_{cc}^\frac{m}{\gamma}(f(\Omega))\le C\cdot \mathcal{H}_E^m(\Omega) $, where the $\mathcal{H}_E^m$ is $m$ dimensional Euclidean Hausdorff measure. And also, $\mathcal{H}_{E}^\frac{m}{\gamma}(f(\Omega))\le C\cdot \mathcal{H}_E^m(\Omega) $ from theorem 1.1 in \cite{221927e8-6394-3580-87e7-951837eafa6b}.  
	We show the case of step-3 Carnot group.   
	\begin{lem}\label{lem-rate}
		Let  $\gamma:[0,1] \rightarrow \mathbb{R}^{r+s+t} $ be a curve and $ \alpha \in \left( \frac{1}{2}, 1 \right] $ .Then $ \gamma \in C^{0, \alpha}\left([0,1] ; G \right) $ if and only if $ \pi \circ \gamma \in C^{0, \alpha}\left( [0,1] ; \mathbb{R}^{r}\right) $ ,and
		\begin{equation*}
			\begin{array}{l}
				\gamma^{B_k}(b)-\gamma^{B_k}(a)=\frac{1}{2}\sum_{i=1}^{r} \sum_{i<j} \alpha_{k}^{i j}\int_{a}^{b}\left( \gamma^A_{i}d \gamma^A_{j}- \gamma^A_{j}d \gamma^A_{i}\right)\\
				\gamma^{C_{m}}(b)-\gamma^{C_{m}}(a)=-\frac{1}{2}\sum_{i=1}^{r} \sum_{j=1}^{s} \int_{a}^{b}\gamma^B_{j} \beta_{m}^{i j}d \gamma^A_{i}+\frac{1}{12} \sum_{l=1}^{r} \sum_{k=1}^{s}\sum_{i<j}  \alpha_{k}^{i j}\beta_{m}^{l k}\gamma^A_{l}\int_{a}^{b}\left( \gamma^A_{i}d\gamma^ A_{j}- \gamma^A_{j}d \gamma^A_{i}\right) 
			\end{array} 
		\end{equation*}
		If in addition $\gamma(a)=0$ , then
		\begin{equation*}
			\begin{array}{l}
				\left|\gamma^{A_k}(b)-\gamma^{A_k}(a)\right| \lesssim \varepsilon^{ \gamma}\\
				\left|\gamma^{B_k}(b)-\gamma^{B_k}(a)\right| \lesssim \varepsilon^{2 \gamma}\\
				\left|\gamma^{C_{m}}(b)-\gamma^{C_{m}}(a)\right| \lesssim \varepsilon^{3 \gamma} 
			\end{array} 
		\end{equation*}
	\end{lem}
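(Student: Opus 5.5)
The plan is to follow the Young-integral strategy used for the step-two Heisenberg case in \cite{2025arXiv250311506H}, with the quasi-metric $d_K$ from \eqref{metric-K} as the basic tool. Since $d_K$ is comparable with $d_{cc}$ on compact sets, $\gamma\in C^{0,\alpha}([0,1];G)$ is equivalent to the bound $d_K(\gamma(t),\gamma(s))\lesssim |t-s|^{\alpha}$. Layer by layer this means three things:
\[
|\pi_1(\gamma(s)^{-1}\gamma(t))|\lesssim|t-s|^{\alpha},\quad |\varphi_1^k(\gamma(t),\gamma(s))|\lesssim|t-s|^{2\alpha},\quad |\varphi_2^m(\gamma(t),\gamma(s))|\lesssim|t-s|^{3\alpha}.
\]
Since $\alpha>\frac12$, we have $2\alpha>1$ and $3\alpha>1$. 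Every integral $\int \gamma^A_i\,d\gamma^A_j$, and every integral of $\gamma^B$ against $d\gamma^A$, then exists as a Young integral, and the Young--Lo\`eve estimate $\bigl|\int_s^t (g-g(s))\,dh\bigr|\lesssim [g]_{\alpha}[h]_{\beta}|t-s|^{\alpha+\beta}$ is available.

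\emph{Forward direction.} The first-layer bound immediately gives $\pi\circ\gamma\in C^{0,\alpha}$. For the $B_k$ identity, take a partition $a=t_0<\dots<t_N=b$ and telescope $\gamma^{B_k}(b)-\gamma^{B_k}(a)=\sum_i(\gamma^{B_k}(t_{i+1})-\gamma^{B_k}(t_i))$. By the formula for $\varphi_1^k$, each increment equals $\varphi_1^k(\gamma(t_{i+1}),\gamma(t_i))$ plus the antisymmetric term $\frac12\sum\alpha_k^{ij}(A_i a_j-a_iA_j)$, evaluated at the consecutive points. The $\varphi_1^k$ contributions sum to $O\bigl(\sum|\Delta t_i|^{2\alpha}\bigr)$, which tends to $0$ as the mesh goes to zero because $2\alpha>1$. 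The antisymmetric terms are Riemann--Stieltjes sums converging to the Young integral. The $C_m$ identity is handled the same way, telescoping with $\varphi_2^m$ and using $3\alpha>1$. The extra terms are sums of $B\,\Delta A$ and of cubic expressions in $A$. I will rewrite the cubic terms with the $B$-identity just proved, so that they too become Riemann sums of Young integrals. (I read the $\gamma^A_l$ in the last term of the stated formula as sitting inside the integral.)

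\emph{Converse.} Assume $\pi\circ\gamma\in C^{0,\alpha}$ and that the two identities hold. Substituting the identities into the expressions for $\varphi_1^k(\gamma(t),\gamma(s))$ and $\varphi_2^m(\gamma(t),\gamma(s))$ should make the cross terms cancel. What remains for $\varphi_1^k$ is $\frac12\sum\alpha_k^{ij}\int_s^t\bigl((A_i-A_i(s))\,dA_j-(A_j-A_j(s))\,dA_i\bigr)$, which the Young--Lo\`eve estimate bounds by $|t-s|^{2\alpha}$. For $\varphi_2^m$ one similarly gets integrals of the left-translated second-layer coordinate, which is $2\alpha$-H\"older by the previous step, against $dA$, together with iterated integrals of $A$. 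Both are $O(|t-s|^{3\alpha})$. Comparing $d_K$ with $d_{cc}$ then gives $\gamma\in C^{0,\alpha}$.

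\emph{Final estimates.} If $\gamma(a)=0$ and $|b-a|\le\varepsilon$, then $\gamma(a)^{-1}\gamma(b)=\gamma(b)$, so the three layer bounds above, applied with $s=a$ and $t=b$, give exactly $\varepsilon^{\alpha}$, $\varepsilon^{2\alpha}$ and $\varepsilon^{3\alpha}$ for the $A$, $B$ and $C$ increments. The main obstacle I expect is the third-layer algebra in the converse. One has to check that after substitution the $\frac1{12}$ cubic terms in $\varphi_2^m$ combine exactly into genuine increment-based Young integrals, with no leftover term of order only $|t-s|^{\alpha}$ or $|t-s|^{2\alpha}$. My first attempt will be to verify this by left-translating so that $s$ becomes the origin, where all base-point terms vanish.
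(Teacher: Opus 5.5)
Your plan is the Riemann-sum/Young-integral argument the paper relies on; its proof only refers to Theorem~7.6 of \cite{2025arXiv250311506H}, the Heisenberg-group counterpart, so the approach matches and works once two third-layer points are corrected. First, the telescoped $C_m$ increments also contain the terms $\frac12\beta_m^{ij}A_i(t_n)\,\Delta B_j$, and it is these (not the cubic terms) that the $B$-identity turns into cubic sums, yielding $\frac14-\frac16=\frac1{12}$; second, in the converse the absence of lower-order leftovers (equivalently, that $\gamma(s)^{-1}\gamma$ again satisfies both identities) needs the $B$-identity together with the Jacobi relation $T^{pqr}+T^{qrp}+T^{rpq}=0$ for $T^{lij}=\sum_k\alpha_k^{ij}\beta_m^{lk}$, and the translated second-layer coordinate $\tilde B$ must be estimated through $\tilde B(s)=0$ and $[\tilde B]_{\alpha,[s,t]}\lesssim|t-s|^{\alpha}$, not through a $2\alpha$-H\"older bound, which no nonconstant function can satisfy when $2\alpha>1$.
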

	\begin{proof}
		The proof here is the same as Theorem 7.6 in \cite{2025arXiv250311506H}. 
	\end{proof}

	\begin{thm}
		Suppose that  $f \in C^{0, \gamma}\left(\Omega ; G\right)$ , where $G=\mathbb{R}^{r+s+t}$ is a step three Carnot group   and $\Omega \subset \mathbb{R}^{m}$  is open.\\ 
		(1) If $ 0 <\gamma \leq \frac{1}{2}$ and $\frac{m+2r\gamma+s\gamma}{3\gamma}\le r+2s+3t$, then the Euclidean Hausdorff measure satisfy $\mathcal{H}_E^\frac{m+2r\gamma+s\gamma}{3\gamma}(f(\Omega))\le C\cdot \mathcal{H}_E^m(\Omega) $.\\
		(2) If $ \frac{1}{2}<\gamma \leq 1$ and $m>r\gamma$, then $dim_E(f(\Omega))\leq \frac{r}{\gamma}$.\\
		(3) If $ \frac{1}{2}<\gamma \leq 1$ and $m\leq r\gamma$, then $\mathcal{H}_E^\frac{r}{\gamma}(f(\Omega))\le C\cdot \mathcal{H}_E^m(\Omega) $.

		\begin{proof}
			Let us start with the proof of (1). This is a very easy case and it holds for the cases (2) and (3) as well.
			We take a covering of $\Omega$, $\Omega\subset \bigcup_{i=1}^{n}B(x_i,r_i)$ and $max(r_i)\le\delta$, then we have 
			\begin{equation*}
				\mathcal{H}_{\delta,E}^m(\Omega)\le \sum_{i=1}^{n}(r_i)^m 
			\end{equation*}
			For all the points in $G=\mathbb{R}^{r+s+t}$, we just need to study the behavior near the origin. 
			For the other points we can use group translation to move them to the origin.
			So because of Lemma \ref{lem-rate}, we have
			\begin{equation*}
				\begin{array}{l}
					\left|f^{A_k}(b)-f^{A_k}(a)\right| \lesssim \left|b-a\right|^{ \gamma}\\
					\left|f^{B_k}(b)-f^{B_k}(a)\right| \lesssim \left|b-a\right|^{2 \gamma}\\
					\left|f^{C_{m}}(b)-f^{C_{m}}(a)\right| \lesssim \left|b-a\right|^{3 \gamma} 
				\end{array} 
			\end{equation*}
			We can use many balls respect to the CC metric, to cover the image $f(B(0,r_i))$.
			By \eqref{metric-K},
			\begin{equation*}
				\begin{aligned}
					|\pi_{i}^j(f(q)^{-1}f(p))|^{\frac{1}{i}}&\leq d_K(f(p),f(q))\\
					&\approx d_{c c}(f(p),f(q))\\
					&\lesssim |p-q|^\gamma.
				\end{aligned}
			\end{equation*}
			For convenience, we can assume that $f(q)=e$. Then
			\begin{equation}\label{6.3}
				|f^{A_l}(p)-f^{A_l}(q)|\lesssim |p-q|^\gamma
			\end{equation}
			\begin{equation*}
				|f^{B_l}(p)-f^{B_l}(q)|\lesssim |p-q|^{2\gamma}
			\end{equation*}
			\begin{equation*}
				|f^{C_l}(p)-f^{C_l}(q)|\lesssim |p-q|^{3\gamma}
			\end{equation*}
			
			In the directions of $\mathbb{R}^r$ , we can use approximately $\left[\frac{r^{\gamma}}{r^{3\gamma}}\right]$ balls with radius $r^{3\gamma}$ to cover. In the directions of $\mathbb{R}^s$ , we can use approximately $\left[\frac{r^{2\gamma}}{r^{3\gamma}}\right]$ balls with radius $r^{3\gamma}$ to cover. In the directions of $\mathbb{R}^t$ , we can use approximately $\left[\frac{r^{3\gamma}}{r^{3\gamma}}\right]$ balls with radius $r^{3\gamma}$ to cover. 
			\begin{equation}
				\begin{aligned}
					\mathcal{H}_E^k(f(\Omega))&\le \sum_{i=1}^{n} \sum^{N_i} (r_i^{3\gamma})^k\\
					& \lesssim \sum_{i=1}^{n} \left[\frac{r_i^{\gamma}}{r_i^{3\gamma}}\right]^r \left[\frac{r_i^{2\gamma}}{r_i^{3\gamma}}\right]^s \left[\frac{r_i^{3\gamma}}{r_i^{3\gamma}}\right]^t r_i^{3\gamma k}\\
					& \le \sum_{i=1}^{n} r_i^{\gamma (3k-2r-s)}
				\end{aligned}
			\end{equation}
			If we take $\gamma (3k-2r-s)=m$ , then 
			\begin{equation}
				\begin{aligned}
					\mathcal{H}_E^\frac{m+2r\gamma+s\gamma}{3\gamma}(f(\Omega))
					& \lesssim \sum_{i=1}^{n} r_i^{m}\rightarrow \mathcal{H}_E^m(\Omega)
				\end{aligned}
			\end{equation}
			
			For case (2), $ \frac{1}{2}<\gamma \leq 1$ and $m>r\gamma$.
			The first layer of $G=\mathbb{R}^{r+s+t}$ is full and the only direction the image of $f$ can move is in the second and third layer. However, since  $\frac{1}{2}<\gamma \leq 1$, $Xf^*(\omega)=0$ where the $\omega$ is any contact form on $G$, which shows the image of $f$ can not expand in the second and third layer.
			Thus, $\mathcal{H}_E^{\frac{r}{\gamma}+\delta}(f(\Omega))=0$ for any $\delta>0$.
			
			For case (3), $ \frac{1}{2}<\gamma \leq 1$ and $m\leq r\gamma$. Since  $\frac{1}{2}<\gamma \leq 1$, $Xf^*(\omega)=0$. Therefore, by lemma \ref{lem-rate}, we can compute the $f^{B}\circ \lambda$ and $f^C \circ \lambda$ as long as the coordinates of $f^{A}\circ \lambda$ are clear, where $\lambda$ is a smooth curve in $\Omega$. We can lift a curve in $\mathbb{R}^r$ to $G=\mathbb{R}^{r+s+t}$. Thus, by projection $\pi_1$, all information of $f\circ \lambda$ is in curve $\pi_1(f\circ \lambda)=f^A\circ \lambda$. 
			
			Let us take Euclidean balls $B(f(p_i),r_k)$ in $G$, then we can use balls $\pi \left(B(f(p_i),r_k) \right)$ with the number of $\frac{2Length(\lambda)^\gamma}{ r_k}$ to cover $f^A\circ \lambda$ for $Length(\lambda)$ and $r_k$ small enough. Therefore, by lifting, the balls $\left(B(f(p_i),r_k) \right)$ can also cover curve $f\circ \lambda$, since \eqref{6.3} and $r_k^{2\gamma}<r_k$ for $\gamma>\frac{1}{2}$.
			In conclusion, as long as we can cover the image of $f$ with Euclidean balls in the directions of the first layer. Then all of these balls naturally would cover the image of $f$.
			
			We take a covering of $\Omega$, $\Omega\subset \bigcup_{i=1}^{n}Box(x_i,r_i)$ and $max(r_i)\le\delta$, then we have 
			\begin{equation*}
				\mathcal{H}_{\delta,E}^m(\Omega)\le \sum_{i=1}^{n}(r_i)^m 
			\end{equation*}
			and the $n$ at least larger than $\frac{|\Omega|}{\delta^m}$.
			
			In the directions of $\mathbb{R}^r$ , we can use approximately $\left[\frac{r^{\gamma}}{r^{3\gamma}}\right]$ balls with radius $r^{3\gamma}$ to cover. In the directions of $\mathbb{R}^s$ , we can use approximately $\left[\frac{r^{2\gamma}}{r^{3\gamma}}\right]$ balls with radius $r^{3\gamma}$ to cover. In the directions of $\mathbb{R}^t$ , we can use approximately $\left[\frac{r^{3\gamma}}{r^{3\gamma}}\right]$ balls with radius $r^{3\gamma}$ to cover. 
			\begin{equation}
				\begin{aligned}
					\mathcal{H}_E^\frac{m}{\gamma}(f(\Omega))&\le \sum_{i=1}^{n}\left\{diam_E(f(B(x_i,r_i)))\right\}^\frac{m}{\gamma}\\
					&\lesssim \sum_{i=1}^{n}\left(C r_i^\gamma\right)^\frac{m}{\gamma}\\
					&=C^\frac{m}{\gamma} \sum_{i=1}^{n} r_i^m \to C^\frac{m}{\gamma} \mathcal{H}_E^m(\Omega)\\
				\end{aligned}
			\end{equation}

		\end{proof}
	\end{thm}
	If the Carnot group $G$ is replaced by step-$n$ Carnot groups for $n>3$, we can also draw a similar conclusion.

 	\bibliographystyle{plain}
	\bibliography{ref}

@ARTICLE{2025arXiv250814647H,
	author = {{Hakavuori}, Eero and {Heikkil{\"a}}, Susanna and {Ikonen}, Toni},
	title = "{Smooth contact lifts to central extensions of Carnot groups}",
	journal = {arXiv e-prints},
	year = 2025,
	month = aug,
	eid = {arXiv:2508.14647},
	pages = {arXiv:2508.14647},
	doi = {10.48550/arXiv.2508.14647},
	archivePrefix = {arXiv},
	eprint = {2508.14647},
	primaryClass = {math.DG},
	adsurl = {https://ui.adsabs.harvard.edu/abs/2025arXiv250814647H}
}

@misc{kleiner2021pansupullbackrigiditymappings,
	title={Pansu pullback and rigidity of mappings between Carnot groups}, 
	author={Bruce Kleiner and Stefan Muller and Xiangdong Xie},
	year={2021},
	eprint={2004.09271},
	archivePrefix={arXiv},
	primaryClass={math.DG},
	url={https://arxiv.org/abs/2004.09271}, 
}

@article {MR1683160,
	AUTHOR = {Haj\l asz, Piotr and Koskela, Pekka},
	TITLE = {Sobolev met {P}oincar\'e},
	JOURNAL = {Mem. Amer. Math. Soc.},
	FJOURNAL = {Memoirs of the American Mathematical Society},
	VOLUME = {145},
	YEAR = {2000},
	NUMBER = {688},
	PAGES = {x+101},
	ISSN = {0065-9266,1947-6221},
	MRCLASS = {46E35 (30C65 31C25 53C17 58J60)},
	MRNUMBER = {1683160},
	MRREVIEWER = {Alexander\ D.\ Ukhlov},
	DOI = {10.1090/memo/0688},
	URL = {https://doi.org/10.1090/memo/0688},
}

@article {MR3581902,
	AUTHOR = {Xie, Xiangdong},
	TITLE = {Rigidity of quasiconformal maps on {C}arnot groups},
	JOURNAL = {Math. Proc. Cambridge Philos. Soc.},
	FJOURNAL = {Mathematical Proceedings of the Cambridge Philosophical
	Society},
	VOLUME = {162},
	YEAR = {2017},
	NUMBER = {1},
	PAGES = {131--150},
	ISSN = {0305-0041,1469-8064},
	MRCLASS = {30L10 (30C65 30L05)},
	MRNUMBER = {3581902},
	MRREVIEWER = {Pekka\ J.\ Pankka},
	DOI = {10.1017/S0305004116000487},
	URL = {https://doi.org/10.1017/S0305004116000487},
}

@book{capogna2007introduction,
	author    = {Luca Capogna and Donatella Danielli and Scott D. Pauls and Jeremy T. Tyson},
	title     = {An Introduction to the Heisenberg Group and the Sub-Riemannian Isoperimetric Problem},
	series    = {Progress in Mathematics},
	volume    = {259},
	publisher = {Birkh{\"a}user},
	year      = {2007},
	address   = {Boston, MA},
	isbn      = {978-0-8176-3264-0},
	doi       = {10.1007/978-0-8176-4733-6}
}

@ARTICLE{2025arXiv250311506H,
	author = {{Haj{\l}asz}, Piotr and {Mirra}, Jacob and {Schikorra}, Armin},
	title = "{H{\"o}lder continuous mappings, differential forms and the Heisenberg groups}",
	journal = {arXiv e-prints},
	year = 2025,
	month = mar,
	eid = {arXiv:2503.11506},
	pages = {arXiv:2503.11506},
	doi = {10.48550/arXiv.2503.11506},
	archivePrefix = {arXiv},
	eprint = {2503.11506},
	primaryClass = {math.DG},
	adsurl = {https://ui.adsabs.harvard.edu/abs/2025arXiv250311506H}
}

@article{221927e8-6394-3580-87e7-951837eafa6b,
	ISSN = {02141493, 20144350},
	URL = {http://www.jstor.org/stable/43736775},
	author = {Zoltán M. Balogh and Matthieu Rickly and Francesco Serra Cassano},
	journal = {Publicacions Matemàtiques},
	number = {1},
	pages = {237--259},
	publisher = {Universitat Autònoma de Barcelona},
	title = {COMPARISON OF HAUSDORFF MEASURES WITH RESPECT TO THE EUCLIDEAN AND THE HEISENBERG METRIC},
	urldate = {2026-04-01},
	volume = {47},
	year = {2003}
}
\end{document}